\documentclass[12pt]{amsart}
\usepackage{fancybox}
\usepackage{ascmac}
\usepackage{delarray}
\usepackage{enumerate}
\usepackage{amsmath,amssymb}
\usepackage{color}
\usepackage{comment}

\usepackage{mathrsfs}
\usepackage{paralist}

%\textheight=8.2 true in
%\textwidth=6.5 true in
%\topmargin 30pt
%\oddsidemargin 0.0in
%\evensidemargin 0.0in

\setcounter{page}{1}

\makeatletter
 \@addtoreset{equation}{subsection}
\makeatother

\def\re{\mathbb{R}}

\def\N{\mathbb{N}}

\def\eps{\varepsilon}
\def\pd{\partial}
\def\ol{\overline}

\def\la{\lambda}
\def\al{\alpha}

\def\({\left(}
\def\){\right)}
\def\pd{\partial}
\def\void{\phi}
\def\BOX{{\setlength{\unitlength}{1pt}\begin{picture}(8,8)
	\put(1,1){\framebox(6,6)}\end{picture}}\ }
\def\qed{\hfill\BOX\vskip1em\par}

\def\weakto{\rightharpoonup}
\def\intO{\int_{\Omega}}
\def\intdO{\int_{\partial\Omega}}

\def\diver{{\rm div }}

\def\ubp{\underline{u}_{p}}

\def\void{\phi}

\numberwithin{equation}{section}
\newtheorem{theorem}{Theorem}[section]
\newtheorem{corollary}[theorem]{Corollary}
\newtheorem{lemma}[theorem]{Lemma}
\newtheorem{proposition}[theorem]{Proposition}

\numberwithin{theorem}{section}

\begin{document}

\title[Finsler Lane-Emden problem]{Asymptotic behavior of least energy solutions to the Finsler Lane-Emden problem with large exponents}
\author[S. Habibi]{Sadaf Habibi$^1$}
\author[F. Takahashi]{Futoshi Takahashi$^2$}

\date{\today}

\setcounter{footnote}{1}
\footnotetext{
Department of Mathematics, Osaka City University,
3-3-138 Sugimoto, Sumiyoshi-ku, Osaka 558-8585, Japan. \\
e-mail:{\tt habibzaisadaf@gmail.com}}

\setcounter{footnote}{2}
\footnotetext{
Department of Mathematics, Osaka City University,
3-3-138 Sugimoto, Sumiyoshi-ku, Osaka 558-8585, Japan. \\
e-mail:{\tt futoshi@sci.osaka-cu.ac.jp}}

\begin{abstract} 
In this paper we are concerned with the least energy solutions to the Lane-Emden problem driven by an anisotropic operator, 
so-called the Finsler $N$-Laplacian, on a bounded domain in $\re^N$.
We prove several asymptotic formulae as the nonlinear exponent gets large. 

\medskip
\noindent
{\sl Key words: Finsler Lane-Emden problem, Finsler Laplacian, Least energy solution}  
\rm 
\\[0.1cm]
{\sl 2010 Mathematics Subject Classification: 35J35, 35J60}  
\rm 
\end{abstract}
\maketitle

\bigskip

%%%%%%%%%%%%%%%%%%%%%%%%%%%%%%%%%%%%%%%%%%%%%%%%%%%%%%%%%%%%%%%%%%%%%%%%%%%%%%%%%%%%%%%%%%%%%%%%%%%%%%%%%%%%%%%
\section{Introduction}
\label{section:Introduction}
%%%%%%%%%%%%%%%%%%%%%%%%%%%%%%%%%%%%%%%%%%%%%%%%%%%%%%%%%%%%%%%%%%%%%%%%%%%%%%%%%%%%%%%%%%%%%%%%%%%%%%%%%%%%%%%

Let $N \ge 2$ be an integer.
In this paper, we study the following Lane-Emden problem driven by an anisotropic operator $Q_N$:
\begin{equation}
\label{E_N}
	\begin{cases}
		&-Q_N u = u^p \quad \mbox{in} \; \Omega, \\
		&u > 0 \quad \mbox{in} \; \Omega, \\
		&u = 0 \quad \mbox{on} \; \pd\Omega,
	\end{cases}
\end{equation}
where $\Omega$ is a smooth bounded domain in $\re^N$,
$p >1$ is any positive number, and $Q_N$ is a quasilinear operator, 
so-called {\it the Finsler $N$-Laplacian}, defined by
\[
	Q_N u = \sum_{i=1}^N \frac{\pd}{\pd x_i} \( H(\nabla u)^{N-1} H_{\xi_i} (\nabla u) \).
\]
Here $H \in C^2(\re^N \setminus \{ 0 \})$ is any norm on $\re^N$ and $H_{\xi_i}(\xi) = \frac{\pd H(\xi)}{\pd \xi_i}$.
We assume that $H^N \in C^1(\re^N)$ and ${\rm Hess} \( H^N(\xi) \)$ is positive definite for any $\xi \in \re^N$, $\xi \ne 0$.
%
% The convexity of $\xi \mapsto H^N(\xi)$ is assured by the convexity of $\xi \mapsto H(\xi)$.
% The assumption $H^N \in C^1(\re^N)$ is needed to use Pohozaev Theorem \ref{Theorem:Pohozaev}
%
Note that $Q_N u$ can be written as 
\[
	Q_N u = \diver \( \nabla_{\xi} ( \frac{1}{N} H(\xi)^N ) \Big|_{\xi = \nabla u} \) 
	= \sum_{i,j =1}^N a_{ij}(\nabla u) \frac{\pd^2 u}{\pd x_i \pd x_j},
\]
where $a_{ij}(\nabla u) = {\rm Hess} (\frac{1}{N} H^N(\xi))_{i,j} \Big|_{\xi = \nabla u}$.
If $H(\xi) = |\xi|$ (the Euclidean norm), 
then $Q_N u$ coincides with the $N$-Laplacian $\Delta_N u = \diver (|\nabla u|^{N-2} \nabla u)$ of a function $u$.
In this case, the problem \eqref{E_N} was treated by Ren and Wei \cite{Ren-Wei(TAMS)} \cite{Ren-Wei(PAMS)} when $N=2$, 
and in \cite{Ren-Wei(JDE)} for general $N \ge 2$. 
Ren and Wei \cite{Ren-Wei(JDE)} considered the least energy solution $u_p$ of the following quasilinear problem
\begin{align*}
	\begin{cases}
		&-\Delta_N u = u^p \quad \mbox{in} \; \Omega, \\
		&u > 0 \quad \mbox{in} \; \Omega, \\
		&u = 0 \quad \mbox{on} \; \pd\Omega,
	\end{cases}
\end{align*}
where $\Omega$ is a smooth bounded domain in $\re^N$.
They studied the asymptotic behavior of $u_p$ as the nonlinear exponent $p \to \infty$,
and proved that the least energy solutions remain bounded in $L^{\infty}$-norm regardless of $p$.
When the dimension $N = 2$, they showed that the least energy solutions must develop one ``peak" in the interior of $\Omega \subset \re^2$,
that is, the shape of graph of $u_p$ looks like a single spike as $p \to \infty$.
Moreover they showed that this peak point must be a critical point of the Robin function of the domain.
For other generalizations of this problem to various situations, see for example, \cite{TF(OJM)}, \cite{TF(CVPDE)}, \cite{TF(JMAA)}, \cite{Santra}, \cite{Santra-Wei}.

%
%Ren and Wei proved that
%\begin{enumerate}
%	\item The least energy solution $u_p$ of this problem 
%obtained by solving a constrained variational problem satisfies 
%\[
%	0 < \exists C_1 < \Vert u_p \Vert_{L^{\infty}} < \exists C_2
%\]
%for any $p$ sufficiently large.
%	\item Set $v_p := u_p / \math{ (\intO u_p^p dx) }$, 
%then for any sequence $v_{p_n}$ of $v_p$ with $p_n \to \infty$,
%there exists a subsequence such that:
%		\begin{itemize}
%		\item[$\bullet$] the blow-up set $S$ of $v_{p_n}$ is one point: $S = \{ x_0 \}$
%		 and in the interior of $\Omega$.
%		\item[$\bullet$] 
%\[
%	f_n := \left( \intO u_{p_n}^{p_n} dx \right)^{p_n-1} v_{p_n}^{p_n} \stackrel{*}{\weakto} \delta_{x_0} 
%\]
%		in the sense of Radon measures of $\Omega$.
%		\item[$\bullet$] $v_{p_n} \to G(x,x_0)$ in $C^2_{loc}(\Omega \setminus \{x_0 \})$. 
%		\item[$\bullet$] The blow-up point $x_0$ is a critical point of the Robin function 
%		$R(x)$ on $\Omega$:
%\[
%			\nabla R(x_0) = 0,
%\]
%		where $R(x) = H(x,x)$ and $H(x,y)$ is the regular part of 
%		Green's function $G(x,y)$ of $-\Delta$ acting on $H_0^1(\Omega)$:
%\[
%			H(x,y) := G(x,y) + \frac{1}{2\pi} \log |x-y|.
%\]
%		\end{itemize}
%\end{enumerate}
%

Now, main aim of the paper is to extend the results of Ren and Wei \cite{Ren-Wei(TAMS)}, \cite{Ren-Wei(PAMS)}, \cite{Ren-Wei(JDE)} to the anisotropic problem \eqref{E_N}.

As in \cite{Ren-Wei(TAMS)}, \cite{Ren-Wei(PAMS)}, \cite{Ren-Wei(JDE)},
we restrict our attention to the least energy solutions to \eqref{E_N} constructed as follows:

Consider the constrained minimization problem:
\begin{equation}
\label{Cp}
	C_p = \inf \{ \intO H(\nabla u)^N dx: u \in W^{1,N}_0(\Omega), \int_{\Omega} |u|^{L^{p+1}(\Omega)} dx =1 \}.
\end{equation}
Since the Sobolev imbedding $W^{1,N}_0(\Omega) \hookrightarrow L^{p+1}(\Omega)$ is compact for any $p>1$,
we have at least one minimizer $\ubp$ for the problem (\ref{Cp}), where 
$\ubp \in W^{1,N}_0(\Omega), \Vert \ubp \Vert_{p+1} = 1$.
As $|\ubp| \in W^{1,N}_0(\Omega)$ also achieves $C_p$, we may assume $\ubp >0$.
Note that $Q_N(cu) = c^{N-1} Q_N(u)$ for a constant $c > 0$. 
Thus if we define 
\[
	u_p = C_p^{\frac{1}{p+1-N}} \ubp,
\]
then $u_p$ solves $\eqref{E_N}$ and $C_p = \intO H(\nabla u_p)^N dx / (\intO |u_p|^{p+1}dx)^{\frac{N}{p+1}}$. 
Standard regularity argument implies that any weak solution $u \in W^{1,N}_0(\Omega)$ satisfies
$u \in C^{1, \al}(\ol{\Omega})$ for some $\al \in (0,1)$.
We call $u_p$ the {\it least energy solution} to \eqref{E_N}.

Our first result is the following $L^{\infty}$-bound of least energy solutions.

\begin{theorem}
\label{Theorem:bound}
Let $u_p$ be a least energy solution to \eqref{E_N}. Then there exist $C_1, C_2$ (independent of $p$),
such that 
\[
	0 < C_1 \le \| u_p \|_{L^{\infty}(\Omega)} \le C_2 < \infty
\]
for $p$ large enough.

Furthermore, we have
\begin{align*}
	\lim_{p \to \infty} p^{N-1} \intO H(\nabla u_p)^N dx = \lim_{p \to \infty} p^{N-1} \intO u_p^{p+1} dx = \(\frac{Ne \beta_N}{N-1}\)^{N-1}
\end{align*}
where $\beta_N = N(N\kappa_N)^{\frac{1}{N-1}}$, $\kappa_N = |\mathcal{W}|$ is the volume (with respect to the $N$-dimensional Hausdorff measure) of the unit Wulff ball associated with the dual norm $H^0$ of $H$: 
\[
	\mathcal{W} = \{ x \in \re^N : H^0(x) < 1 \}.
\]
\end{theorem}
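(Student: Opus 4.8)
\noindent\textit{Strategy of proof.} The statement splits into two fairly independent parts: the two energy limits and the lower bound $\|u_p\|_{L^\infty}\ge C_1$, which follow from a sharp two-sided asymptotic for the minimization level $C_p$ of \eqref{Cp}; and the upper bound $\|u_p\|_{L^\infty}\le C_2$, which requires a concentration (blow-up) analysis of $u_p$ near a maximum point, carried out in the Wulff/Finsler geometry in the spirit of Ren--Wei.

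Since $\ubp$ realizes \eqref{Cp} and $u_p=C_p^{1/(p+1-N)}\ubp$, testing \eqref{E_N} against $u_p$ gives $\intO H(\nabla u_p)^N\,dx=\intO u_p^{p+1}\,dx=C_p^{(p+1)/(p+1-N)}$, so both limits reduce to computing $\lim_p p^{N-1}C_p^{(p+1)/(p+1-N)}$. I would first prove
\[
	\lim_{p\to\infty}p^{N-1}C_p=\Big(\frac{Ne\beta_N}{N-1}\Big)^{N-1},
\]
after which the exponent $(p+1)/(p+1-N)$ is harmless, since this forces $\log C_p=-(N-1)\log p+O(1)$ and hence $C_p^{N/(p+1-N)}\to1$. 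For the upper estimate of $C_p$ I would use the anisotropic Moser function on Wulff balls $\mathcal{W}_r=\{H^0<r\}$: fixing $x_0+\mathcal{W}_\delta\subset\Omega$, let $m_\eps$ equal $(\log(\delta/\eps))^{(N-1)/N}$ on $x_0+\mathcal{W}_\eps$, equal $(\log(\delta/\eps))^{-1/N}\log(\delta/H^0(x-x_0))$ on the Wulff annulus, and $0$ elsewhere. Using $H(\nabla H^0)\equiv1$, $|\mathcal{W}_r|=\kappa_N r^N$ and the anisotropic coarea formula, one gets $\intO H(\nabla m_\eps)^N\,dx=N\kappa_N$ and $\|m_\eps\|_{p+1}^{p+1}\ge\kappa_N\eps^N(\log(\delta/\eps))^{(N-1)(p+1)/N}$; inserting $m_\eps$ in \eqref{Cp} and optimizing in $\eps$ (optimum at $\log(\delta/\eps)\sim(p+1)(N-1)/N^2$) gives $\limsup_p p^{N-1}C_p\le(Ne\beta_N/(N-1))^{N-1}$. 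For the matching lower estimate I would invoke the anisotropic Moser--Trudinger inequality, $\intO e^{\beta_N|u|^{N/(N-1)}}\,dx\le C_{\mathrm{MT}}|\Omega|$ whenever $\intO H(\nabla u)^N\,dx\le1$: expanding the exponential, keeping the term of order $k_p:=\lceil(p+1)(N-1)/N\rceil$, bounding $\|u\|_{p+1}$ by H\"older in terms of $\|u\|_{k_pN/(N-1)}$, and applying Stirling's formula (so $k_p\sim p(N-1)/N$) yields $\liminf_p p^{N-1}C_p\ge(Ne\beta_N/(N-1))^{N-1}$, together with the claimed behaviour of $\log C_p$. The lower $L^\infty$-bound is then immediate: $\|u_p\|_{L^\infty}^{p+1}\ge|\Omega|^{-1}\intO u_p^{p+1}\,dx=|\Omega|^{-1}C_p^{(p+1)/(p+1-N)}$, so $\|u_p\|_{L^\infty}\ge|\Omega|^{-1/(p+1)}C_p^{1/(p+1-N)}\to1$ and any $C_1<1$ works for $p$ large.

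For the upper bound, suppose $M_p:=\|u_p\|_{L^\infty}=u_p(x_p)\to\infty$ along a subsequence. Set $r_p:=(p^{N-1}M_p^{p+1-N})^{-1/N}$ (so $r_p\to0$) and $w_p(y):=\tfrac{p}{M_p}\big(u_p(x_p+r_py)-M_p\big)$ on $\Omega_p:=(\Omega-x_p)/r_p$. The homogeneities of $H$ and $H_{\xi_i}$ give $w_p\le w_p(0)=0$ and
\[
	-Q_Nw_p=\Big(1+\frac{w_p}{p}\Big)^{\!p}=:g_p\quad\text{in }\Omega_p,\qquad 0<g_p\le1,
\]
while a change of variables and H\"older show $\int_{\Omega_p}g_p\,dy=(p/M_p)^{N-1}\intO u_p^p\,dx\le C$ uniformly in $p$ (using $\intO u_p^p\,dx\le(\intO u_p^{p+1})^{p/(p+1)}|\Omega|^{1/(p+1)}$, the first part, and $\liminf M_p\ge1$). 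One then argues: $(\mathrm a)$ $\mathrm{dist}(x_p,\partial\Omega)/r_p\to\infty$, so $\Omega_p\nearrow\re^N$ (no boundary concentration --- this uses the Dirichlet datum, boundary regularity, and an anisotropic Pohozaev identity); $(\mathrm b)$ uniform interior bounds $\|w_p\|_{L^\infty(B_R)}\le C(R)$, hence by the anisotropic $C^{1,\al}$-estimates for $Q_N$ also $\|w_p\|_{C^{1,\al}(B_R)}\le C(R)$, using only $w_p\le w_p(0)=0$, $0\le g_p\le1$ and a comparison/Harnack argument; $(\mathrm c)$ subsequential convergence $w_p\to W$ in $C^1_{\mathrm{loc}}(\re^N)$ with $-Q_NW=e^W$ in $\re^N$, $W\le W(0)=0$, $\intRN e^W\,dy<\infty$; $(\mathrm d)$ the classification of such entire solutions (anisotropically radial, $W(y)=-N\log(1+c_NH^0(y)^{N/(N-1)})+\mathrm{const}$), giving $\intRN e^W\,dy=\mathfrak{m}$ explicit and $W(y)=-b\log H^0(y)+O(1)$ at infinity with $b:=(\mathfrak{m}/N\kappa_N)^{1/(N-1)}$; $(\mathrm e)$ matching this inner profile, valid for $r_p\ll|x-x_p|\ll1$, with the outer limit $pu_p\to\mathfrak{m}^{1/(N-1)}M_\infty\,G(\cdot,x_0)$, where $G$ is the Finsler $N$-Green function of $\Omega$ with pole $x_0=\lim x_p$. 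Since $\log r_p=-\tfrac1N[(N-1)\log p+(p+1-N)\log M_p]$, the constant part of this matched identity reads, to leading order, $M_p\big(1-\tfrac{b}{N}\log M_p\big)\to0$; were $M_p\to\infty$ this would force $\log M_p\to N/b$, hence $M_p$ bounded --- a contradiction. (The naive inequality $\intO H(\nabla u_p)^N\,dx\ge\int_{B_{Rr_p}(x_p)}H(\nabla u_p)^N\,dx=\tfrac{M_p^N}{p^N}\int_{B_R}H(\nabla w_p)^N\,dy$ only yields $M_p=O(p^{1/N})$, which is why step $(\mathrm e)$ is indispensable.)

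The first part is routine once the anisotropic Moser--Trudinger inequality and the Wulff-ball computations are in place. The heart of the proof is the blow-up argument, and within it the two genuinely delicate points are $(\mathrm b)$ --- the uniform interior a priori bound on $w_p$, the obstruction being that a priori $w_p$ might tend to $-\infty$ on part of a fixed ball, to be excluded using essentially only $w_p\le w_p(0)=0$ together with the $L^\infty$/$L^1$ control of $g_p$ --- and $(\mathrm e)$, the rigorous inner--outer matching (carried out via barriers and Pohozaev identities on annular regions) that upgrades $M_p=O(p^{1/N})$ to $M_p=O(1)$; the anisotropic Liouville classification of $(\mathrm d)$ is the remaining ingredient to be cited or proved.
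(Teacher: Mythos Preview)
Your treatment of the energy limits and the lower $L^\infty$-bound is essentially the paper's: the two-sided asymptotic $\lim_{p\to\infty}p^{N-1}C_p=\big(\tfrac{Ne\beta_N}{N-1}\big)^{N-1}$ is obtained exactly as you describe, by the anisotropic Moser function for the upper bound and by Trudinger--Moser plus Stirling for the lower bound (the paper's Lemma~\ref{Lemma:refined_Sobolev} and Proposition~\ref{Prop:C_p_asymptotics}). Your lower $L^\infty$-bound via $\|u_p\|_{L^\infty}^{p+1}\ge|\Omega|^{-1}\intO u_p^{p+1}\,dx$ is a harmless variant of the paper's first-eigenvalue argument.

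For the upper bound, however, the paper takes a completely different and far more elementary route: no blow-up at all. From the refined Sobolev inequality with exponent $t=\tfrac{Np}{N-1}$ one has $\|u_p\|_{L^{Np/(N-1)}}\le M$ uniformly; Chebyshev then controls $|\{u_p>\gamma_p/2\}|$, while a level-set/coarea computation together with the Finsler isoperimetric inequality \eqref{Finsler-isoperimetric} gives $\gamma_p\le C\gamma_p^{p/(N-1)}|\{u_p>\gamma_p/2\}|^{1/(N-1)}$. Combining these two estimates yields $\gamma_p^{1+p/(N-1)^2}\le C\,M^{Np/(N-1)^2}$ and hence $\gamma_p\le C$. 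This is a direct Talenti-type argument requiring only the isoperimetric inequality and the $L^{Np/(N-1)}$-bound already available from the first part.

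Your blow-up strategy is not merely longer; step~$(\mathrm e)$ as written has a genuine gap. Matched-asymptotics arguments of the Adimurthi--Grossi type are normally run \emph{after} one knows $M_p=O(1)$ (indeed, the paper does exactly this in \S\ref{section:limsup_bound} to obtain $\limsup M_p\le e^{(N-1)/N}$, relying on Theorem~\ref{Theorem:bound}). In your sketch the ``outer limit'' $pu_p\to \mathfrak m^{1/(N-1)}M_\infty\,G(\cdot,x_0)$ is written with a finite $M_\infty$ while you are simultaneously assuming $M_p\to\infty$; and the asserted matched identity $M_p\big(1-\tfrac{b}{N}\log M_p\big)\to 0$ is not derived. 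Step~$(\mathrm d)$ is also nontrivial: the full classification of entire solutions to $-Q_N W=e^W$ in $\re^N$ for the anisotropic $N$-Laplacian is, to my knowledge, not available in the literature for general $N$ (the paper only needs and proves the inequality $\intRN e^W\,dx\ge\big(\tfrac{N}{N-1}\big)^{N-1}N^N\kappa_N$, its Lemma~\ref{Lemma:Ding}). I would replace your upper-bound argument by the level-set/isoperimetric one above; the blow-up machinery you set up is precisely what is used later to sharpen the bound, not to establish it.
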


On the asymptotic behavior of the $L^{\infty}$-norm of $u_p$, we have
\begin{theorem}
\label{Theorem:limsup}
Let $u_p$ be a least energy solution to \eqref{E_N}.
Then it holds that
\[
	1 \le \limsup_{p \to \infty} \| u_p \|_{L^{\infty}(\Omega)} \le e^{\frac{N-1}{N}}.
\]
\end{theorem}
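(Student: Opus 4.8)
\medskip
\noindent\emph{Plan of proof.}\
The lower bound is immediate from Theorem~\ref{Theorem:bound}: were $\limsup_{p\to\infty}\|u_p\|_{L^\infty(\Omega)}<1$, then $\|u_p\|_{L^\infty(\Omega)}\le1-\delta$ for some $\delta\in(0,1)$ and all large $p$, so $\intO u_p^{p+1}\,dx\le(1-\delta)^{p+1}|\Omega|$ would decay faster than any negative power of $p$, contradicting $\lim_{p\to\infty}p^{N-1}\intO u_p^{p+1}\,dx=\bigl(\tfrac{Ne\beta_N}{N-1}\bigr)^{N-1}>0$. So only the upper bound needs work, and for it the plan is a blow-up analysis at the maximum point.

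We may assume $M:=\limsup_{p\to\infty}\|u_p\|_{L^\infty(\Omega)}>1$, since otherwise $M\le1<e^{(N-1)/N}$. Passing to a subsequence along which $M_p:=\|u_p\|_{L^\infty(\Omega)}=u_p(x_p)\to M$ and $x_p\to x_0\in\ol\Omega$, we introduce the anisotropic blow-up scale $\mu_p:=\bigl(p^{N-1}M_p^{p-N+1}\bigr)^{-1/N}$, which tends to $0$ because $M_p\to M>1$, and the rescaled functions
\[
	v_p(y):=\frac{p}{M_p}\bigl(u_p(x_p+\mu_p y)-M_p\bigr),\qquad y\in\Omega_p:=\mu_p^{-1}(\Omega-x_p).
\]
Using the homogeneity properties of $Q_N$ and the identity $u_p/M_p=1+v_p/p$, one checks that $\mu_p$ is precisely the scale making $v_p$ solve $-Q_N v_p=(1+v_p/p)^p$ in $\Omega_p$, with $v_p\le0=v_p(0)=\max v_p$. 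The first substantial step is to show that the maximum does not escape to the boundary, i.e. $\mathrm{dist}(x_p,\pd\Omega)/\mu_p\to\infty$, so that $\Omega_p\to\re^N$; I expect this to require a Hopf-type boundary gradient estimate for $Q_N$ together with a rescaling argument excluding boundary concentration. Granting this, uniform-in-$p$ interior Harnack and $C^{1,\alpha}$ estimates for the anisotropic operator (Tolksdorf--Lieberman-type regularity) give $v_p\to v$ in $C^1_{\mathrm{loc}}(\re^N)$, where $v\in C^1(\re^N)$ solves the Finsler--Liouville equation
\[
	-Q_N v=e^v\ \text{ in }\ \re^N,\qquad v(0)=0=\max_{\re^N}v,
\]
and $\int_{\re^N}e^v\,dx<\infty$ follows from Fatou's lemma together with the uniform bound on $p^{N-1}\intO u_p^{p+1}\,dx$ from Theorem~\ref{Theorem:bound}.

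The final step uses the classification of finite-mass entire solutions of the Finsler--Liouville equation: anisotropic (Wulff) symmetrization reduces it to a radial ODE and yields $\int_{\re^N}e^v\,dx=\bigl(\tfrac{N\beta_N}{N-1}\bigr)^{N-1}$ (a lower bound on this mass already suffices for what follows, and it holds regardless of how many blow-up bubbles there are). One then evaluates the local mass near $x_p$: after the change of variables $x=x_p+\mu_p y$ and using $p^{N-1}M_p^{p+1}\mu_p^N=M_p^N$,
\[
	p^{N-1}\!\!\int_{\{H^0(x-x_p)<R\mu_p\}}\!\! u_p^{p+1}\,dx=M_p^N\!\!\int_{\{H^0(y)<R\}}\!\!\Bigl(1+\tfrac{v_p}{p}\Bigr)^{p+1}dy,
\]
which tends to $M^N\!\int_{\{H^0(y)<R\}}e^v\,dy$ as $p\to\infty$, for each fixed $R>0$. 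Since the integrand is nonnegative and the Wulff ball $\{H^0(x-x_p)<R\mu_p\}$ lies in $\Omega$ for $p$ large, the left-hand side is bounded above by $p^{N-1}\intO u_p^{p+1}\,dx\to\bigl(\tfrac{Ne\beta_N}{N-1}\bigr)^{N-1}$; letting $R\to\infty$ gives
\[
	M^N\Bigl(\tfrac{N\beta_N}{N-1}\Bigr)^{N-1}\le\Bigl(\tfrac{Ne\beta_N}{N-1}\Bigr)^{N-1}=e^{N-1}\Bigl(\tfrac{N\beta_N}{N-1}\Bigr)^{N-1},
\]
that is $M\le e^{(N-1)/N}$.

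\medskip
\noindent The main obstacles are the two anisotropic analytic inputs this scheme rests on: the uniform-in-$p$ regularity theory for $Q_N$ needed both to pass to the $C^1_{\mathrm{loc}}$ limit and, more delicately, to rule out concentration of the maximum at $\pd\Omega$; and the classification of finite-mass solutions of $-Q_N v=e^v$ on $\re^N$ with the precise mass $\bigl(\tfrac{N\beta_N}{N-1}\bigr)^{N-1}$ --- it is this number that converts the energy identity of Theorem~\ref{Theorem:bound} into the sharp constant $e^{(N-1)/N}$.
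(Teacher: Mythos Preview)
Your approach is essentially the same as the paper's --- blow-up at the maximum, pass to the Finsler--Liouville limit, use the mass lower bound, and compare with the energy identity from Theorem~\ref{Theorem:bound}. Two points deserve comment.

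First, the step you flag as a ``main obstacle'' --- ruling out $\mathrm{dist}(x_p,\pd\Omega)/\mu_p\not\to\infty$ via a Hopf-type argument --- is not needed, and the paper does not attempt it. Instead the paper treats both alternatives: if the ratio stays bounded, $\Omega_p$ converges to a half-space $\re^N_+(s_0)$ and the limit $z$ solves $-Q_N z=e^z$ there with $z=-\infty$ on the boundary. The Ding-type mass lower bound
\[
\int e^{z}\,dx\ \ge\ \Bigl(\tfrac{N}{N-1}\Bigr)^{N-1}N^N\kappa_N=\Bigl(\tfrac{N\beta_N}{N-1}\Bigr)^{N-1}
\]
holds equally on $\re^N$ and on the half-space, because the boundary condition $z=-\infty$ forces every superlevel set $\{z>t\}$ to lie inside the half-space, so the level-set/Finsler-isoperimetric argument goes through unchanged. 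This removes one of your two obstacles entirely.

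Second, on the other obstacle: you only need the \emph{inequality} above, not the full classification, and the paper proves it directly by the anisotropic Ding argument (coarea formula plus the Finsler isoperimetric inequality $P_H(E,\re^N)\ge N\kappa_N^{1/N}|E|^{(N-1)/N}$). For the uniform local $L^\infty$ bound on $z_p$ needed to pass to the limit, the paper does not apply Harnack to $z_p$ itself but introduces $w_p$ solving $-Q_N w_p=(1+z_p/p)^p$ in $B_R(0)$ with zero boundary data, observes that $\psi_p=w_p-z_p\ge0$ satisfies a linearized equation $-\tilde Q_N\psi_p=0$ with $\tilde Q_N$ built from ${\rm Hess}(H^N)$, and applies Harnack to $\psi_p$ using $\psi_p(0)=w_p(0)\le C$; this is the concrete mechanism behind the $C^{1,\alpha}_{\mathrm{loc}}$ compactness you invoke.
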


To state further results, we need some definitions.
Set 
\begin{equation}
\label{v_p}
	v_p = \frac{u_p}{(\intO u_p^p dx)^{\frac{1}{N-1}}}.
\end{equation}
Then $v_p$ is a weak solution of
\begin{equation}
\label{v_p_eq}
	\begin{cases}
	-Q_N v_p = f_p(x) = \frac{u_p^p}{\intO u_p^p dx} \quad \text{in} \; \Omega, \\ 
	v_p > 0 \quad \text{in} \; \Omega, \\ 
	v_p = 0 \quad \text{on} \; \pd\Omega.
	\end{cases}
\end{equation}
For $X, Y \in \re^N, X, Y \ne 0, X \ne Y$, we put
\[
	d(X, Y) = \frac{\( H^{N-1}(X) (\nabla_{\xi} H)(X) - H^{N-1}(Y) (\nabla_{\xi} H)(Y)\) \cdot (X - Y)}{H^N(X-Y)},
\]
and
\begin{equation}
\label{d_N}
	d_N = \inf \{ d(X, Y) \ | \ X, Y \in \re^N, X, Y \ne 0,  X \ne Y \},
\end{equation}
where $X \cdot Y = \sum_{j=1}^N X_j Y_j$ denotes the usual inner product for $X, Y \in \re^N$.
As in \cite{Wang-Xia} Lemma 5.1, we can obtain the estimate
\[
	\min \{ \frac{\la}{\beta^N}, 1 \} \le d_N \le 1
\]
where $\la$ is the least eigenvalue of ${\rm Hess} \(\frac{1}{N} H^N(\xi) \)$, which is positive by the assumption \eqref{Hess}
and $\beta$ is as in \eqref{alpha_beta}, see \S \ref{section:Notation}. 
Also define
\begin{align}
\label{L_0L_1}
	L_0 = \limsup_{p \to \infty} \frac{p \(\intO u_p^p dx \)^{\frac{1}{N-1}}}{\( \frac{N}{N-1} e^{\frac{N-1}{N}} \)}, \quad L_1 = d_N^{-\(\frac{1}{N-1}\)} L_0.
\end{align}
For a sequence $v_{p_n}$ of $v_p$, we define the {\it blow-up set} $S$ of $\{ v_{p_n} \}$ as usual:
\begin{equation*}
\label{S}
	S = \{ x \in \overline{\Omega}: \exists \mbox{a subsequence} \; v_{p_n^{'}}, \exists \{ x_n \} \subset \Omega \; \mbox{s.t.} \ x_n \to x \; \mbox{and} \; v_{p_n^{'}}(x_n) \to \infty \}.
\end{equation*}
In the following, $\sharp A$ denotes the cardinality of a set $A$ and $[ \cdot ]$ denotes the Gauss symbol.

\begin{theorem}
\label{Theorem:blowup}
Let $\Omega \subset \re^N$ be a smooth bounded domain.
Then for any sequence $v_{p_n}$ of $v_p$ with $p_n \to \infty$, the blow-up set $S$ of $v_{p_n}$ is non-empty.
Also there exists a subsequence (still denoted by $v_{p_n}$) such that the estimate
\[
	\sharp (S \cap \Omega) \le \left[ \frac{e^{\frac{N-1}{N}}}{d_N} \right]
\]
holds true for this subsequence.

Assume $S \cap \Omega = \{ x_1, \cdots, x_k \} \subset \Omega$.
Then we have
\begin{enumerate}
\item[(i)]
\[
	f_n = \frac{u_{p_n}^{p_n}}{\intO u_{p_n}^{p_n} dx} \stackrel{*}{\weakto} \sum_{i=1}^k \gamma_i \delta_{x_i} 
\]
in the sense of Radon measures of $\Omega$, 
where 
\[
	\gamma_i \ge \( \frac{\beta_N}{L_1} \)^{N-1}
\]
and $\sum_{i=1}^k \gamma_i \le 1$.
%\[
%	\gamma_i = \lim_{r \to 0} \lim_{n \to \infty} \int_{B_r(x_i)} p_n^{N-1} |\nabla \upn|^N dx
%\]
\item[(ii)]
%there exists a subsequence of $p_n$ (again denoted by the same symbol) such that 
%$v_{p_n} \to G$ in $C^1_{loc}(\ol{\Omega} \setminus S)$ for some function $G$ satisfying
$v_{p_n} \to G$ in $C^1_{loc}(\Omega \setminus (S \cap \Omega))$ for some function $G$ satisfying
\[
	\begin{cases}
	-Q_N G = 0 \quad &\text{in} \, \Omega \setminus (S \cap \Omega), \\ 
	G = + \infty \quad &\text{on} \, S \cap \Omega, \\ 
	G = 0 \quad &\text{on} \,\pd\Omega \setminus (\pd\Omega \cap S).
	\end{cases}
\]
\item[(iii)] $\| u_{p_n} \|_{L^{\infty}(K)} \to 0$ as $n \to \infty$ for any compact set $K \subset \Omega \setminus (S \cap \Omega)$.
\end{enumerate}
\end{theorem}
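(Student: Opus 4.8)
The plan is to follow the blow-up analysis strategy of Ren--Wei, carefully tracking the anisotropic constant $d_N$ throughout. The starting point is the auxiliary function $v_p$ in \eqref{v_p}, which solves \eqref{v_p_eq} with right-hand side $f_p$ of unit $L^1$ mass; by Theorem~\ref{Theorem:bound} one has the two-sided estimate $p^{N-1}\intO u_p^{p+1}\,dx \to (Ne\beta_N/(N-1))^{N-1}$, from which, combined with the definition of $L_0, L_1$ in \eqref{L_0L_1}, one extracts that $p\(\intO u_p^p\,dx\)^{1/(N-1)}$ stays bounded above and below by positive constants. First I would establish that $S$ is non-empty: if not, $v_{p_n}$ would be bounded in $C^1_{loc}$, hence (by the $W^{1,N}$-elliptic estimates for $Q_N$ and the normalization $\|f_p\|_1=1$) $u_{p_n}\to 0$ locally uniformly, which one shows contradicts the lower bound $\|u_p\|_{L^\infty}\ge C_1$ from Theorem~\ref{Theorem:bound} together with a Harnack-type or boundary argument forcing the maximum into the interior.

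Next I would set up the concentration-compactness dichotomy for the measures $f_n\,dx$: passing to a subsequence, $f_n \stackrel{*}{\weakto} \mu$ for some Radon measure $\mu$ with $\mu(\overline{\Omega})\le 1$. The decomposition $S\cap\Omega = \{x_1,\dots,x_k\}$ and the atomic part $\mu = \sum \gamma_i \delta_{x_i} + \mu_{reg}$ with $\mu_{reg}$ having no atoms follows from the standard argument that away from atoms the local $L^\infty$ bound on $u_{p_n}$ forces $f_n\to 0$; one then rules out $\mu_{reg}\ne 0$ by the same local-decay mechanism, giving (i)'s statement $\sum\gamma_i\le 1$. The quantitative lower bound $\gamma_i \ge (\beta_N/L_1)^{N-1}$ is the crux: near a blow-up point $x_i$ one performs the anisotropic rescaling of $u_{p_n}$ — setting $w_n(y) = p_n\big(u_{p_n}(x_{p_n}+\mu_n y)/\|u_{p_n}\|_{\infty} - 1\big)/(N-1)$ or the analogous scaling adapted to the Finsler geometry — and shows $w_n$ converges to the entire solution of the anisotropic Liouville equation $-Q_N w = e^w$ on $\re^N$, whose total mass $\intRN e^w\,dx$ is computed via the anisotropic Moser--Trudinger / classification result (this is where $\beta_N$ and $\kappa_N=|\mathcal{W}|$ enter) to equal a definite constant; dividing by $\intO u_{p_n}^{p_n}\,dx$ and using the definition of $L_1$ with the factor $d_N^{-1/(N-1)}$ — which appears because the anisotropic comparison/Kato inequality via $d(X,Y)\ge d_N$ is what lets one bound the scaling parameter $\mu_n$ — produces exactly $\gamma_i\ge(\beta_N/L_1)^{N-1}$.

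For the cardinality bound $\sharp(S\cap\Omega)\le [e^{(N-1)/N}/d_N]$, I would combine $\sum_{i=1}^k\gamma_i\le 1$ with the per-atom lower bound: $k\,(\beta_N/L_1)^{N-1}\le 1$, so $k \le (L_1/\beta_N)^{N-1} = d_N^{-1}(L_0/\beta_N)^{N-1}\cdot\beta_N^{N-1}\cdot(\dots)$; unwinding $L_0$ via $L_0\le 1$ (which itself follows from Theorem~\ref{Theorem:limsup}'s bound $\limsup\|u_p\|_\infty\le e^{(N-1)/N}$ together with the relation between $\intO u_p^p\,dx$ and $\|u_p\|_\infty$) yields $k\le e^{(N-1)/N}/d_N$, hence the Gauss-symbol estimate since $k$ is an integer. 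Finally parts (ii) and (iii): on any compact $K\subset\Omega\setminus S$ the $f_n$ tend to $0$ in $L^\infty(K)$ (that is (iii), obtained from $f_n\stackrel{*}{\weakto}\sum\gamma_i\delta_{x_i}$ plus elliptic regularity bootstrapping $u_{p_n}$ locally), so $v_{p_n}$ is bounded in $C^{1,\alpha}_{loc}(\Omega\setminus S)$ by the regularity theory for $Q_N$; extracting a limit $G$ and passing to the limit in the weak formulation gives $-Q_N G = 0$ away from $S$, while the atoms of the limiting measure force $G=+\infty$ on $S\cap\Omega$ (via testing against functions peaked at $x_i$ and a lower barrier using the fundamental solution of $Q_N$, which behaves like $\log$ in dimension $N$) and the zero boundary data is inherited from $v_{p_n}=0$ on $\pd\Omega$ plus a Harnack/barrier argument near regular boundary points. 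The main obstacle I anticipate is the blow-up limit step: proving the rescaled functions genuinely converge to a solution of the anisotropic Liouville equation on all of $\re^N$ and identifying its mass — here the lack of conformal invariance and the need to control the anisotropic scaling via the delicate quantity $d(X,Y)$ (rather than the clean $N$-Laplacian scaling) is exactly what the constant $d_N$ is introduced to handle, and getting the sharp constant $\beta_N$ requires the anisotropic Moser--Trudinger inequality and the classification of entire solutions for $Q_N$.
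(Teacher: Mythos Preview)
Your overall architecture (concentration-compactness for $f_n$, atoms at blow-up points, per-atom lower bound $\Rightarrow$ cardinality bound, then regularity away from $S$) matches the paper, but the mechanism you propose for the \emph{per-atom lower bound} $\gamma_i\ge(\beta_N/L_1)^{N-1}$ is different from the paper's and, as written, contains a gap.

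The paper does \emph{not} obtain $\gamma_i\ge(\beta_N/L_1)^{N-1}$ by rescaling to the entire Liouville equation. Instead it uses the Brezis--Merle $(L,\delta)$-regular/irregular dichotomy: a point $x_0$ is declared $(L_1,\delta)$-irregular when every cutoff mass near $x_0$ is at least $(\beta_N/(L_1+3\delta))^{N-1}$, and the key Lemma~\ref{Lemma:small} shows that $(L_1,\delta)$-regular points are not blow-up points. That lemma has two ingredients: (a) a pointwise bound $f_n(x)\le\exp\big((L_1+\delta/2)\,d_N^{1/(N-1)}\,v_n(x)\big)$, which is where Theorem~\ref{Theorem:limsup} (the bound $\limsup\|u_p\|_\infty\le e^{(N-1)/N}$) is used, via the monotonicity of $(\log t)/t$ on $(0,e]$; and (b) the Finsler Brezis--Merle inequality Theorem~\ref{Theorem:BM}(2), which is exactly where the constant $d_N$ enters. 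So $d_N$ comes from the Brezis--Merle comparison of $v_n$ with its $Q_N$-harmonic extension on a small ball, not from ``bounding the scaling parameter $\mu_n$'' as you suggest. The rescaling/Liouville argument you outline is what the paper uses to prove Theorem~\ref{Theorem:limsup} in \S\ref{section:limsup_bound}, and there $d_N$ plays no role at all; if you try to push that argument directly to get $\gamma_i\ge(\beta_N/L_1)^{N-1}$, you will find no natural place for $d_N$ to appear, and you are also left with an unexplained factor $\liminf u_n(x_n^{(i)})^{N-1}$ at each local peak that you cannot control sharply.

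There is also a concrete error in your cardinality computation: the bound is not $L_0\le 1$ but $L_0\le e^{1/N}\beta_N$ (from H\"older and Lemma~\ref{Lemma:energy}), which then gives $(L_1/\beta_N)^{N-1}=d_N^{-1}(L_0/\beta_N)^{N-1}\le e^{(N-1)/N}/d_N$. Finally, your argument for $S\ne\void$ is more circuitous than needed: the paper simply takes $x_n$ a maximum point of $u_n$ and observes $v_n(x_n)=u_n(x_n)/\lambda_n\ge C_1/O(1/p_n)\to\infty$ directly.
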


In \cite{Ren-Wei(JDE)}, Ren and Wei obtained an estimate of the number of interior blow-up set
\[
	\sharp (S \cap \Omega) \le \left[ \frac{1}{d_N} \( \frac{N}{N-1} \)^{N-1} \right]
\]
when $H(\xi) = |\xi|$ case.
Since $e^x < \frac{1}{1-x}$ for $x \in (0,1)$, we check that $e^{\frac{N-1}{N}} < \( \frac{N}{N-1} \)^{N-1}$ for all $N \ge 2$.
Thus the estimate in Theorem \ref{Theorem:blowup} is better than that in \cite{Ren-Wei(JDE)} 
even when $H(\xi)$ coincides with the Euclidean norm $|\xi|$.
Also, Theorem \ref{Theorem:limsup} seems new even for $H(\xi) = |\xi|$ and $N > 2$ case.

Finally, we prove that if the blow-up set consists of one point, it must be an interior point of $\Omega$.
\begin{theorem}
\label{Theorem:one_point_blowup}
Assume $\sharp S = 1$ and $S = \{ x_0 \}$, $x_0 \in \ol{\Omega}$.
Then $x_0 \in \Omega$ must hold.
\end{theorem}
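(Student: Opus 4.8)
\emph{Sketch of a proposed proof.}
I would argue by contradiction, assuming $\sharp S=1$ with $S=\{x_0\}$ and $x_0\in\pd\Omega$. Then $S\cap\Omega=\emptyset$, so Theorem \ref{Theorem:blowup} applies with $k=0$: along a subsequence (still denoted $p_n$), $\fpn\stackrel{*}{\weakto}0$ in $\Omega$ and $\vpn\to G$ in $C^1_{loc}(\Omega)$, where $-Q_NG=0$ in $\Omega$ and $G=0$ on $\pd\Omega\setminus\{x_0\}$, while $\|\upn\|_{L^\infty(K)}\to0$ for every compact $K\subset\Omega$. First I would record three facts: (a) the boundary regularity theory for $Q_N$ upgrades the convergence to $\vpn\to G$ in $C^1$ on compact subsets of $\ol\Omega\setminus\{x_0\}$, since the blow-up analysis behind Theorem \ref{Theorem:blowup} also gives $\upn\to0$ (hence $\fpn\to0$) uniformly away from $x_0$; (b) by Theorem \ref{Theorem:bound} and $C_1\le\|\upn\|_{L^\infty(\Omega)}\le C_2$, one has $M_n:=\intO\upn^{p_n}\,dx\ge c\,p_n^{-(N-1)}$; (c) integrating \eqref{v_p_eq} over $\Omega$ and using $\vpn=0$ on $\pd\Omega$, the $1$-homogeneity of $H$ and $\xi\cdot\nabla_\xi(\tfrac1N H^N)=H^N$ gives the flux identity
\[
	\intdO H(\nu)^N\,|\pd_\nu\vpn|^{N-1}\,d\sigma=\intO\fpn\,dx=1\qquad(n\in\N),
\]
$\nu$ being the outer unit normal. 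Then I translate and rotate so that $x_0=0$ with inner normal $e_N$, and fix $\delta>0$ so small that $-\,\nu\cdot e_N\ge c_0>0$ and $H(\nu)^N\ge c>0$ on $\pd\Omega\cap B_\delta$.

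If $G\not\equiv0$ I would argue directly. By the strong maximum principle $G>0$ in $\Omega$, and since a single point has zero $N$-capacity, $G$ must be a positive multiple of the $Q_N$-Poisson kernel of $\Omega$ with pole at $x_0$; its normal derivative satisfies $|\pd_\nu G|\notin L^{N-1}(\pd\Omega\cap B_\delta)$ (for $H(\xi)=|\xi|$ this is the classical estimate $|\pd_\nu G(x)|\asymp|x-x_0|^{-N}$ on $\pd\Omega$, which persists in the anisotropic setting). Combining the $C^1$-convergence $\vpn\to G$ on each $\pd\Omega\setminus B_\rho(x_0)$ with Fatou's lemma gives $\liminf_n\intdO H(\nu)^N|\pd_\nu\vpn|^{N-1}\,d\sigma\ge\int_{\pd\Omega\setminus B_\rho(x_0)}H(\nu)^N|\pd_\nu G|^{N-1}\,d\sigma$ for every $\rho>0$, and the right side tends to $+\infty$ as $\rho\to0$; this contradicts the flux identity. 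So $G\equiv0$.

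With $G\equiv0$, a local Rellich–Pohozaev identity on $\Omega\cap B_\delta$ gives the final contradiction. Testing $-Q_N\vpn=\fpn$ against $\pd_N\vpn$ and integrating by parts (using the $(N-1)$-homogeneity of $\nabla_\xi(\tfrac1N H^N)$, Euler's relation, and $\nabla\vpn=(\pd_\nu\vpn)\nu$ on $\pd\Omega$), together with $\fpn\,\pd_N\vpn=\big((p_n+1)M_n^{N/(N-1)}\big)^{-1}\pd_N(\upn^{p_n+1})$, yields
\begin{align*}
	&\frac{1}{(p_n+1)\,M_n^{N/(N-1)}}\int_{\Omega\cap\pd B_\delta}\upn^{p_n+1}\,(\nu\cdot e_N)\,d\sigma \\
	&\qquad=\frac{N-1}{N}\int_{\pd\Omega\cap B_\delta} H(\nu)^N\,|\pd_\nu\vpn|^{N}\,(-\,\nu\cdot e_N)\,d\sigma+R_n(\delta),
\end{align*}
where $R_n(\delta)$ is a sum of integrals over $\Omega\cap\pd B_\delta$, polynomial in $\nabla\vpn$. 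On $\Omega\cap\pd B_\delta$ one has $\upn\to0$ uniformly—so $\upn^{p_n+1}$ is exponentially small while the prefactor is $O(p_n^{N-1})$ by (b)—and $\vpn\to0$ in $C^1$, hence the left-hand side and $R_n(\delta)$ both tend to $0$. Since $-\,\nu\cdot e_N\ge c_0$ and $H(\nu)^N\ge c$ on $\pd\Omega\cap B_\delta$, it follows that $\int_{\pd\Omega\cap B_\delta}|\pd_\nu\vpn|^{N}\,d\sigma\to0$, so by Hölder's inequality $\int_{\pd\Omega\cap B_\delta}|\pd_\nu\vpn|^{N-1}\,d\sigma\to0$; while $G\equiv0$ and the $C^1$-convergence away from $x_0$ give $\int_{\pd\Omega\setminus B_\delta(x_0)}H(\nu)^N|\pd_\nu\vpn|^{N-1}\,d\sigma\to0$. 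Adding these two contributions contradicts the flux identity. Therefore $x_0\in\pd\Omega$ is impossible, i.e.\ $x_0\in\Omega$.

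The delicate points are (i) the boundary upgrade of the $C^1_{loc}$-convergence of $\vpn$ near $\pd\Omega\setminus\{x_0\}$, and the concomitant fact that $\upn$ is uniformly small there, which I would extract from the blow-up analysis already developed for Theorems \ref{Theorem:bound}–\ref{Theorem:blowup}; and (ii) the potential-theoretic dichotomy for a nonnegative $Q_N$-harmonic function vanishing on $\pd\Omega\setminus\{x_0\}$, together with the non-$L^{N-1}$ integrability of the normal derivative of the $Q_N$-Poisson kernel near its pole on $\pd\Omega$. Establishing this boundary singularity for a general norm $H$ is the step that genuinely goes beyond the Euclidean case, and is where I expect the main work to lie.
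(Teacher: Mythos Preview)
Your strategy diverges from the paper's and carries a genuine gap. In Case~1 ($G\not\equiv0$) you assert that $G$ must be a positive multiple of a ``$Q_N$-Poisson kernel'' with pole at $x_0$, and that its normal derivative fails to lie in $L^{N-1}$ near the pole. For the nonlinear operator $Q_N$ there is no linear Poisson representation, no Martin boundary theory, and no classification of nonnegative $Q_N$-harmonic functions on $\Omega$ vanishing on $\partial\Omega\setminus\{x_0\}$; the remark that a point has zero $N$-capacity does not supply one (capacity arguments typically yield removability, not a prescribed singular profile). You yourself flag this as ``where the main work lies'', but it is not a routine detail---it is an open-ended problem likely harder than the theorem itself, and even in the Euclidean case $H(\xi)=|\xi|$ the boundary behavior of singular $N$-harmonic functions is delicate. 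Your flux identity (c) is correct, and the Case~2 Pohozaev computation with the translation field $e_N$ is sound in outline (modulo the boundary $C^1$-upgrade you note in (i)), but neither finishes the proof unless Case~1 is genuinely eliminated.

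The paper avoids the dichotomy altogether via the Santra--Wei trick. It applies the Pohozaev identity of Theorem~\ref{Theorem:Pohozaev} on $\Omega\cap B_R(x_0)$ with the dilation field $x-y_n$, choosing $y_n=x_0+\rho_n\nu(x_0)$ with $\rho_n$ determined by the condition $\int_{\partial\Omega\cap B_R}H^N(\nabla u_n)\,(x-y_n)\cdot\nu\,ds=0$. Because $\nabla u_n\parallel\nu$ on $\partial\Omega$, \emph{every} boundary term on $\partial\Omega\cap B_R$ reduces to a constant multiple of this single integral and hence vanishes; only integrals over $\Omega\cap\partial B_R(x_0)$ survive. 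After dividing by $\lambda_n^N$, those are controlled by the interior $C^1$-convergence $v_n\to G$ and tend to zero under $\lim_{R\to0}\lim_{n\to\infty}$, while the left side stays bounded below by a positive constant via the blow-up scaling and Lemma~\ref{Lemma:Ding}. This yields the contradiction without any information about $G$ and without any boundary potential theory for $Q_N$.
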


The organization of the paper is as follows: 
In \S \ref{section:Notation}, we recall basic properties of the Finsler norm and collect useful lemmas about the Finsler $N$-Laplacian.
In \S \ref{section:asymptotics}, we obtain asymptotic formula for $C_p$ as $p \to \infty$, and prove the latter half part of Theorem \ref{Theorem:bound}. 
In \S \ref{section:bound}, we prove the $L^{\infty}$-bound of least energy solutions in Theorem \ref{Theorem:bound}.
In \S \ref{section:limsup_bound}, we prove Theorem \ref{Theorem:limsup} using an argument by Adimurthi and Grossi \cite{Adimurthi-Grossi}.
In \S \ref{section:blowup}, we prove Theorem \ref{Theorem:blowup}. We use a notion of $(L, \delta)$-regular, or irregular points, which was originally introduced by Brezis and Merle \cite{Brezis-Merle}.
Finally in \S \ref{section:one_point_blowup}, we prove Theorem \ref{Theorem:one_point_blowup} by using a local Pohozaev identity and an idea by Santra and Wei \cite{Santra-Wei}.

\section{Notations and basic properties}
\label{section:Notation}

Let $H$ be any {\it norm} on $\re^N$, i.e., 
$H$ is convex, $H(\xi) \ge 0$ and $H(\xi) = 0$ if and only if $\xi = 0$, and $H$ satisfies 
\begin{equation}
\label{homo_H}
	H(t \xi) = |t| H(\xi), \quad \forall \xi \in \re^N, \, \forall t \in \re.
\end{equation}
By \eqref{homo_H}, $H$ must be even: $H(-\xi) = H(\xi)$ for all $\xi \in \re^N$.
Throughout of the paper, we also assume that $H \in C^2(\re^N \setminus \{ 0 \})$, $H^N \in C^1(\re^N)$, and
\begin{equation}
\label{Hess}
	\text{${\rm Hess} \( H^N(\xi) \)$ is positive definite for any $\xi \in \re^N$, $\xi \ne 0$.}
\end{equation}
%For example, if ${\rm Hess} H^2(\xi)$ is positive definite on $\re^N \setminus \{ 0 \}$, then \eqref{Hess} is satisfied for any $N \ge 2$, see \cite{Xie-Gong}.
Since all norms on $\re^N$ are equivalent to each other, we see the existence of positive constants $\alpha$ and $\beta$ such that
\begin{equation}
\label{alpha_beta}
	\alpha |\xi| \le H(\xi) \le \beta |\xi|, \quad \xi \in \re^N.
\end{equation}
%Let $K$ denote the convex closed set
%\[
%	K = \{ \xi \in \re^N \,: \, H(\xi) \le 1 \}.
%\]
%Sometimes we will say that $H$ is  {\it the gauge} of $K$.
The dual norm of $H$ is the function $H^0: \re^N \to \re$ defined by 
\[
	H^0(x) = \sup_{\xi \in \re^N \setminus \{ 0 \}} \frac{\xi \cdot x}{H(\xi)}. % = \sup_{\xi \in K} (\xi \cdot x)\,, \qquad x \in \re^N.
\]
%Throughout this paper $\xi \cdot x = \sum_{j=1}^N \xi_j x_j$ denotes the usual inner product of $\re^N$.
%Note that, by definition of $H^0$, the Schwarz inequality holds true, i.e., 
%\[
%	|\xi \cdot x| \le H(\xi) H^0(x), \quad \forall \xi, x \in \re^N.
%\]
It is well-known that $H^0$ is also a norm on $\re^N$ and satisfies the inequality
\[
	\frac{1}{\beta} |x| \le H^0(x) \le\frac{1}{\alpha} |x|, \quad \forall x \in \re^N.
\]
%Also the following equality 
%\[
%	H(\xi) = (H^0)^0(\xi) = \sup_{x \in \re^N \setminus \{ 0 \}} \frac{x \cdot \xi}{H^0(x)}\,, \qquad \xi \in \re^N
%\]
%holds. 
%and $H^0$ itself is the gauge of the closed convex set
%\[
%	K^0 = \{ x \in \re^N \,: \, H^0(x) \le 1 \}.
%\]
%We say that $K$ and $K_0$ are polar to each other. 
%The interior set of $K^0$, i.e.,
The set
\[
	\mathcal{W} = \{ x \in \re^N \,: \, H^0(x) < 1 \}
\]
is called the {\it Wulff ball}, or the $H^0$-unit ball, and we denote $\kappa_N = \mathcal{H}^N(\mathcal{W})$,
where $\mathcal{H}^N$ denotes the $N$-dimensional Hausdorff measure on $\re^N$.
We also denote $\mathcal{W}_r = \{ x \in \re^N \,|\, H^0(x) < r \}$ for any $r > 0$.

For a domain $\Omega \subset \re^N$ and a Borel set $E \subset \re^N$, the {\it anisotropic $H$-perimeter} of a set $E$ with respect to $\Omega$
is defined as
\[
	P_H(E, \Omega) = \sup \left\{ \int_{E \cap \Omega} \diver \sigma dx \ : \ \sigma \in C_0^{\infty}(\Omega, \re^N), H^0(\sigma(x)) \le 1 \right\}.
\]  
If $E$ is Lipschitz, then it holds $P_H(E, \Omega) = \int_{\Omega \cap \pd^* E} H(\nu) d\mathcal{H}^{N-1}$, 
where $\pd^* E$ denotes the reduced boundary of the set $E$ and $\nu(x)$ is the measure theoretic outer unit normal of $\pd^* E$ (see \cite{Evans-Gariepy}).
Also we have $P_H(\mathcal{W}, \re^N) = N \kappa_N$.
For more explanation about the anisotropic perimeter, see \cite{AFTL} and {\cite{Belloni-Ferone-Kawohl}.

Here we just recall some properties of $H$ and $H^0$.
These will be proven by using the homogeneity property of $H$ and $H^0$,
see \cite{Bellettini-Paolini} Lemma 2.1, and Lemma 2.2.

\begin{proposition}
\label{Prop:identities} 
Let $H$ be a Finsler norm on $\re^N$. 
Then the following properties hold true:
\begin{enumerate}
%\item[\rm(1)] $|H(\xi) - H(\eta)| \le H(\xi + \eta) \le H(\xi) + H(\eta)$.
\item[\rm(1)] $|\nabla_{\xi} H(\xi)| \le C$ for any $\xi \ne 0$.
\item[\rm(2)] $\nabla_{\xi} H(\xi) \cdot \xi = H(\xi)$, $\nabla_x H(x) \cdot x = H(x)$ for any $\xi \ne 0$, $x \ne 0$. 
\item[\rm(3)] $\(\nabla_{\xi} H \)(t \xi) = \frac{t}{|t|} \( \nabla_{\xi} H \)(\xi)$ for any $\xi \ne 0$, $t \ne 0$.
\item[\rm(4)] $H\( \nabla H^0(x) \) = 1$. $H^0\( \nabla_{\xi} H(\xi) \) = 1$.
\item[\rm(5)] $H^0(x) \( \nabla_{\xi} H \) \( \nabla_x H^0(x) \) = x$.
\end{enumerate}
\end{proposition}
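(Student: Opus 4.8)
The five identities are all elementary, and rest on two ingredients: the positive $1$-homogeneity of $H$ and of $H^0$, which lets me invoke Euler's identity, and the convexity of $H$ together with the bidual relation $(H^0)^0 = H$, the latter supplemented --- for the single identity (5) --- by the strict convexity of the unit balls furnished by \eqref{Hess}. The plan is to prove them in the order (2), (3), (1), (4), (5). To get (2), fix $\xi \ne 0$, differentiate $H(t\xi) = t\,H(\xi)$ (which holds for $t > 0$ by \eqref{homo_H}) in $t$ at $t = 1$, obtaining $\nabla_\xi H(\xi) \cdot \xi = H(\xi)$; the identical computation for $H^0$, which is likewise positively $1$-homogeneous, gives $\nabla_x H^0(x) \cdot x = H^0(x)$. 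For (3), differentiate $H(t\xi) = |t|\,H(\xi)$ in the variable $\xi$: the left side produces $t\,(\nabla_\xi H)(t\xi)$ and the right side $|t|\,(\nabla_\xi H)(\xi)$, so $(\nabla_\xi H)(t\xi) = \frac{t}{|t|}(\nabla_\xi H)(\xi)$. Then (1) follows at once: by (3) the map $\xi \mapsto \nabla_\xi H(\xi)$ is invariant under positive scaling, and being continuous on $\re^N \setminus \{0\}$ (since $H \in C^2(\re^N \setminus \{0\})$) it is bounded on the Euclidean unit sphere, hence bounded everywhere.

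For (4) I would use convexity of $H$. For $\xi, \eta \ne 0$ one has $H(\eta) \ge H(\xi) + \nabla_\xi H(\xi) \cdot (\eta - \xi)$, and by (2) the right-hand side equals $\nabla_\xi H(\xi) \cdot \eta$; hence $\frac{\nabla_\xi H(\xi) \cdot \eta}{H(\eta)} \le 1$ for every $\eta \ne 0$, with equality at $\eta = \xi$. By the definition of the dual norm, this is precisely $H^0(\nabla_\xi H(\xi)) = 1$. Running the identical argument with $H^0$ in place of $H$, and using $(H^0)^0 = H$, yields $H(\nabla_x H^0(x)) = 1$.

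The substantive identity is (5), and this is where I expect the main obstacle, since it is the only place the assumption \eqref{Hess} enters in an essential way. Fix $x \ne 0$. As $H$ is continuous and positively $1$-homogeneous, the supremum defining $H^0(x)$ is attained on the compact level set $\{H = 1\}$, at some $\xi^\ast$ with $H(\xi^\ast) = 1$ and $\xi^\ast \cdot x = H^0(x)$; equivalently $\xi^\ast$ maximizes the linear functional $\xi \mapsto \xi \cdot x$ over the convex body $\{H \le 1\} = \{H^N \le 1\}$. By \eqref{Hess} this body is \emph{strictly} convex --- its boundary $\{H^N = 1\}$ is a regular $C^1$ hypersurface (indeed $\nabla_\xi H^N \cdot \xi = N H^N \ne 0$ there, by Euler's identity for $H^N$) on which the Hessian of the defining function is positive definite --- so the maximizer $\xi^\ast$ is unique. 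On the other hand, setting $w := \nabla_x H^0(x)$, property (4) gives $H(w) = 1$ and property (2) gives $w \cdot x = H^0(x)$, so $w$ is also a maximizer; uniqueness forces $w = \xi^\ast$. It then remains to write the optimality condition for this constrained maximum, $x = \mu\, \nabla_\xi H(\xi^\ast)$ for a Lagrange multiplier $\mu$ (the constraint is non-degenerate since $\nabla_\xi H(\xi^\ast) \cdot \xi^\ast = 1 \ne 0$ by (2)), and pairing with $\xi^\ast$ while using (2) again gives $\mu = \xi^\ast \cdot x = H^0(x)$; hence $x = H^0(x)\, \nabla_\xi H(\xi^\ast) = H^0(x)\, (\nabla_\xi H)(\nabla_x H^0(x))$, which is (5). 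The only points demanding care are the sign bookkeeping in (3) (the $|t|$ versus $t$) and the verification that \eqref{Hess} genuinely yields strict convexity and hence uniqueness of the maximizer in (5); both are routine, and as the authors indicate one may alternatively just cite \cite{Bellettini-Paolini}, Lemmas 2.1--2.2.
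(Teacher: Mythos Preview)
Your proof is correct and complete. The paper does not actually prove this proposition: it simply remarks that the properties follow from the homogeneity of $H$ and $H^0$ and refers the reader to \cite{Bellettini-Paolini}, Lemmas~2.1 and~2.2. Your argument supplies precisely the elementary details that citation stands in for --- Euler's identity for (2), differentiation of the homogeneity relation for (3), $0$-homogeneity plus compactness for (1), the subgradient inequality for (4), and a Lagrange-multiplier/uniqueness argument for (5) --- so there is nothing to correct, and your treatment is in fact more self-contained than the paper's.
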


Finally, given a smooth function $u$ on $\re^N$, {\it the Finsler Laplace operator} of $u$ (associated with $H$) is defined by
\begin{align*}
	Q u(x) &= {\rm div} \( H(\nabla u(x)) \( \nabla_{\xi} H \)(\nabla u(x)) \) \\
	&= \sum_{j=1}^N \frac{\pd}{\pd x_j} \( H(\xi) H_{\xi_j}(\xi) \Big|_{\xi = \nabla u(x)} \)
\end{align*}
and, more generally, for any $1< q <\infty$, {\it the Finsler $q$-Laplace operator} $Q_q$ by
\[
	Q_q u(x) = \mbox{div} \( H^{q-1}(\nabla u(x))(\nabla_{\xi} H)(\nabla u(x)) \).
\]
If we assume that ${\rm Hess} (H^q(\xi))$ is positive definite on $\re^N \setminus \{ 0 \}$, $Q_q$ becomes a uniformly elliptic operator locally.
The Finsler $q$-Laplacian has been widely investigated in literature by many authors in different settings,
see \cite{AFMTV}, \cite{Belloni-Ferone-Kawohl}, \cite{Cianci-Salani}, \cite{CFR}, \cite{CFV}, \cite{DP-B}, \cite{DP-B-G}, \cite{DP-G}, \cite{Ferone-Kawohl}, \cite{Mercaldo-Sano-TF} \cite{Zhou-Zhou} and the references therein.

We collect here several useful facts.

\begin{theorem}{\rm (Finsler Trudinger-Moser inequality \cite{Wang-Xia})}
\label{Theorem:TM} 
Let $\Omega$ be a bounded domain in $\re^N$, $N \ge 2$. Let $u \in W^{1,N}_0(\Omega)$ satisfy $\intO H(\nabla u)^N dx \le 1$.
Then there exists a constant $C$ depending only on the dimension $N$ such that
\[
	\intO \exp \( \beta |u|^{\frac{N}{N-1}} \) dx \le C |\Omega|
\]
holds for any $\beta \le \beta_N = N(N\kappa_N)^{\frac{1}{N-1}}$.
Furthermore, $\beta_N$ is optimal in the sense that there exists a sequence $\{ u_n \} \subset W^{1,N}_0(\Omega)$ with $\intO H(\nabla u_n)^N dx \le 1$, 
such that $\intO \exp \( \beta |u_n|^{\frac{N}{N-1}} \) dx \to +\infty$ as $n \to \infty$ for $\beta > \beta_N$.
\end{theorem}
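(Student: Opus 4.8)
The plan is to reduce the inequality to a one-dimensional estimate on a half-line by means of the \emph{convex (Wulff) symmetrization}, to conclude from Moser's classical one-dimensional lemma, and then to establish the optimality of $\beta_N$ by testing against a Wulff-adapted Moser sequence. The anisotropy enters only through the symmetrization step; once the problem is reduced to a radial profile, everything is governed by the factor $N\kappa_N$ produced by the Wulff ``polar coordinates,'' and this is exactly what makes $\beta_N=N(N\kappa_N)^{\frac{1}{N-1}}$ the critical exponent.

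Since $H$ is even we have $H(\nabla|u|)=H(\nabla u)$ a.e., so we may assume $u\ge 0$. Replace $u$ by its convex symmetrization $u^{\star}$: the function nonincreasing in $H^0(x)$ on the Wulff ball $\mathcal{W}_R$ with $\kappa_N R^N=|\Omega|$ (extended by zero), determined by $\mathcal{H}^N(\{u^{\star}>t\})=\mathcal{H}^N(\{u>t\})$ for all $t>0$. By the anisotropic P\'olya--Szeg\H{o} inequality (see \cite{AFTL}) one has $\int_{\mathcal{W}_R}H(\nabla u^{\star})^N\,dx\le \intO H(\nabla u)^N\,dx\le 1$, while equimeasurability gives $\int_{\mathcal{W}_R}\exp\!\big(\beta (u^{\star})^{\frac{N}{N-1}}\big)\,dx=\intO\exp\!\big(\beta u^{\frac{N}{N-1}}\big)\,dx$. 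Thus it suffices to treat $u=\phi(H^0(x))$ on $\mathcal{W}_R$ with $\phi\colon[0,R]\to[0,\infty)$ nonincreasing and $\phi(R)=0$. For such $u$ we have $\nabla u=\phi'(H^0(x))\nabla_x H^0(x)$, hence by Proposition~\ref{Prop:identities}\,(4) and the $1$-homogeneity of $H$ it follows that $H(\nabla u)=|\phi'(H^0(x))|$; combining this with the formula $\int_{\mathcal{W}_R}g(H^0(x))\,dx=N\kappa_N\int_0^R g(r)\,r^{N-1}\,dr$ (a consequence of $\mathcal{H}^N(\mathcal{W}_r)=\kappa_N r^N$), the constraint becomes $N\kappa_N\int_0^R|\phi'(r)|^N r^{N-1}\,dr\le 1$ and the quantity to be estimated is $N\kappa_N\int_0^R\exp\!\big(\beta\,\phi(r)^{\frac{N}{N-1}}\big)r^{N-1}\,dr$.

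Substituting $r=Re^{-s/N}$, $s\in[0,\infty)$, and setting $\psi(s)=\phi(Re^{-s/N})$, the constraint turns into $N^N\kappa_N\int_0^\infty|\psi'(s)|^N\,ds\le 1$, and the target equals
\[
	|\Omega|\int_0^\infty\exp\!\Big(\beta\,\psi(s)^{\frac{N}{N-1}}-s\Big)\,ds .
\]
Normalizing $\Psi=(N^N\kappa_N)^{1/N}\psi$ we get $\Psi(0)=0$, $\int_0^\infty|\Psi'|^N\,ds\le 1$, and the algebraic identity $\beta_N(N^N\kappa_N)^{-\frac{1}{N-1}}=1$ shows that for every $\beta\le\beta_N$ the exponent above is at most $\Psi(s)^{\frac{N}{N-1}}-s$. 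After a trivial rescaling of $\Psi$ and of the variable $s$, the estimate $\int_0^\infty\exp\!\big(\Psi(s)^{\frac{N}{N-1}}-s\big)\,ds\le C(N)$ is precisely Moser's one-dimensional lemma, which yields the first assertion with $C=C(N)$.

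For the optimality, fix $x_0\in\Omega$ and $R_0>0$ with $x_0+\mathcal{W}_{R_0}\subset\Omega$, put $c_n=(N\kappa_N\log n)^{-1/N}$, and let $u_n$ be supported in $x_0+\mathcal{W}_{R_0}$, equal to $c_n\log\!\big(R_0/H^0(x-x_0)\big)$ on the Wulff annulus $R_0/n\le H^0(x-x_0)\le R_0$ and to $c_n\log n$ on $H^0(x-x_0)\le R_0/n$. The same computation as above (Proposition~\ref{Prop:identities}\,(4) plus the polar-coordinate formula) gives $\intO H(\nabla u_n)^N\,dx=1$, and on the central Wulff ball $x_0+\mathcal{W}_{R_0/n}$, of measure $\kappa_N(R_0/n)^N$, one has $u_n^{\frac{N}{N-1}}\equiv(N\kappa_N)^{-\frac{1}{N-1}}\log n$; hence
\[
	\intO\exp\!\big(\beta\,u_n^{\frac{N}{N-1}}\big)\,dx\ \ge\ \kappa_N R_0^N\, n^{\,\beta(N\kappa_N)^{-\frac{1}{N-1}}-N},
\]
which diverges exactly when $\beta>N(N\kappa_N)^{\frac{1}{N-1}}=\beta_N$. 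I expect the only genuinely delicate point to be the bookkeeping of constants: one must verify that the convex symmetrization together with the Wulff polar-coordinate formula produces precisely the factor $N\kappa_N$, so that the critical exponent emerging from Moser's lemma is exactly $\beta_N$ rather than some non-sharp multiple of it; the anisotropic P\'olya--Szeg\H{o} inequality and Moser's one-dimensional lemma themselves are used as known results.
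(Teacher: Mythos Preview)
The paper does not prove Theorem~\ref{Theorem:TM}; it is quoted from \cite{Wang-Xia} as a background tool, so there is no ``paper's own proof'' to compare against. Your argument is correct and is in fact the standard route (and the one taken in \cite{Wang-Xia}): convex symmetrization via the anisotropic P\'olya--Szeg\H{o} principle of \cite{AFTL} reduces matters to a Wulff-radial profile, the change of variables $r=Re^{-s/N}$ feeds directly into Moser's one-dimensional lemma, and the Wulff--Moser sequence you construct for the sharpness is exactly the family $m_l$ the present paper itself employs in the proof of Proposition~\ref{Prop:C_p_asymptotics}. The constant bookkeeping you flag as the delicate point is handled correctly: the identity $\beta_N(N^N\kappa_N)^{-1/(N-1)}=1$ is what pins down the sharp exponent, and your verification of it is clean. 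One cosmetic remark: no ``trivial rescaling'' is actually needed after introducing $\Psi$, since Moser's lemma in the form $\int_0^\infty e^{\Psi^{N/(N-1)}-s}\,ds\le C(N)$ for $\Psi(0)=0$, $\int_0^\infty|\Psi'|^N\,ds\le1$ applies verbatim.
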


Next is the unique existence of the Green function for the Finsler $p$-Laplacian.
\begin{theorem}{\rm (\cite{Wang-Xia})}
\label{Theorem:Green}
Let $\Omega \subset \re^N$ be a bounded domain containing the origin. 
Define $\Omega^* = \Omega \setminus \{ 0 \}$ and
\[
	\Gamma(x) = 
	\begin{cases}
	&C(p,N) (H^0(x))^{\frac{p-N}{p-1}} \quad \text{for} \quad 1 < p < N, \\
	&C(N) \log \frac{1}{H^0(x)} \quad \text{for} \quad p = N,
	\end{cases}
\]
where $C(p,N) = \frac{p-N}{p-1} (N\kappa_N)^{-\frac{1}{p-1}}$ and $C(N) = (N\kappa_N)^{-\frac{1}{N-1}}$.
Then there exists a unique function $G(\cdot, 0) \in C^{1, \al}(\Omega^*)$ with $|\nabla G| \in L^{p-1}(\Omega)$, $G/\Gamma \in L^{\infty}(\Omega)$, satisfying
\begin{align*}
	\begin{cases}
		-Q_p G(\cdot,0) = \delta_0 &\quad \text{in} \ \Omega, \\
		G(\cdot,0) = 0 &\quad \text{on} \ \pd\Omega.
	\end{cases}
\end{align*}
Moreover, $g = G - \Gamma$ satisfies $g \in C(\Omega)$ and $\lim_{x \to 0} H^0(x) \nabla g(x) = 0$.
\end{theorem}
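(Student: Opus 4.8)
\emph{Sketch of proof.} We follow the classical construction of the Green function for quasilinear divergence-form operators, adapted to the anisotropic flux $\xi\mapsto H(\xi)^{p-1}(\nabla_\xi H)(\xi)$. The first step is to verify that $\Gamma$ is the fundamental solution of $-Q_p$ on $\re^N$, i.e.\ $-Q_p\Gamma=\delta_0$ in the distributional sense. For $p=N$ one computes $\nabla\Gamma(x)=-C(N)\nabla H^0(x)/H^0(x)$, so $H(\nabla\Gamma)=C(N)/H^0(x)$ by Proposition \ref{Prop:identities}(4) and $(\nabla_\xi H)(\nabla\Gamma)=-(\nabla_\xi H)(\nabla H^0(x))=-x/H^0(x)$ by Proposition \ref{Prop:identities}(3),(5); hence the anisotropic flux of $\Gamma$ is $-C(N)^{N-1}x/(H^0(x))^N$, which is divergence-free on $\re^N\setminus\{0\}$ by Proposition \ref{Prop:identities}(2). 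Pairing with $\phi\in C_c^\infty(\re^N)$ and integrating by parts on $\re^N\setminus\mathcal{W}_{\eps}$ leaves only the boundary term $C(N)^{N-1}\eps^{-N}\int_{\pd\mathcal{W}_{\eps}}x\cdot\nu\,d\mathcal{H}^{N-1}=C(N)^{N-1}N\kappa_N\phi(0)+o(1)$, so $-Q_N\Gamma=C(N)^{N-1}N\kappa_N\delta_0$, which equals $\delta_0$ exactly when $C(N)=(N\kappa_N)^{-1/(N-1)}$; the case $1<p<N$ is analogous and fixes $C(p,N)$.

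For existence, mollify $\delta_0$: put $f_{\eps}=(\kappa_N\eps^N)^{-1}\chi_{\mathcal{W}_{\eps}}$, nonnegative with unit mass. Since $u\mapsto\intO H(\nabla u)^p\,dx$ is strictly convex and coercive on $W^{1,p}_0(\Omega)$ by \eqref{Hess}, the direct method gives a unique minimizer $G_{\eps}\in W^{1,p}_0(\Omega)$ of $u\mapsto\frac1p\intO H(\nabla u)^p\,dx-\int f_{\eps}u\,dx$, which solves $-Q_pG_{\eps}=f_{\eps}$ in $\Omega$, $G_{\eps}=0$ on $\pd\Omega$, is positive by the maximum principle, and lies in $C^{1,\al}_{loc}(\Omega)$ by quasilinear elliptic regularity. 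Two uniform bounds drive the passage to the limit: a Boccardo--Gallou\"{e}t truncation argument yields $\|\nabla G_{\eps}\|_{L^q(\Omega)}\le C$ for every $q<N(p-1)/(N-1)$, hence $|\nabla G_{\eps}|\in L^{p-1}(\Omega)$ uniformly; and comparison with the $H^0$-radial Green function of a large Wulff ball $\mathcal{W}_R\supset\Omega$, which solves the same equation, is explicitly computable, and is dominated by $\Gamma+C$, gives $0\le G_{\eps}\le\Gamma+C$ on $\Omega$ uniformly in $\eps$. As $f_{\eps}\equiv0$ outside $\mathcal{W}_{\eps}$, on every compact $K\subset\Omega^*$ the $G_{\eps}$ are uniformly bounded solutions of $-Q_pu=0$, hence uniformly bounded in $C^{1,\al}(K)$; along a subsequence $G_{\eps}\to G$ in $C^1_{loc}(\Omega^*)$ and weakly in $W^{1,q}_0(\Omega)$. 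The a.e.\ convergence of $\nabla G_{\eps}$ on $\Omega$ together with equi-integrability of the fluxes ($|H(\nabla G_{\eps})^{p-1}(\nabla_\xi H)(\nabla G_{\eps})|\le C|\nabla G_{\eps}|^{p-1}$, bounded in $L^{q/(p-1)}$ with $q/(p-1)>1$) lets one pass to the limit in the weak formulation, so $G$ solves $-Q_pG=\delta_0$ in $\Omega$, $G=0$ on $\pd\Omega$, with $|\nabla G|\in L^{p-1}(\Omega)$ and $G/\Gamma\in L^\infty(\Omega)$.

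On $\Omega^*$ the limit $G$ is a locally bounded weak solution of the uniformly elliptic equation $-Q_pu=0$, hence $G\in C^{1,\al}_{loc}(\Omega^*)$, and boundary regularity with zero data on the smooth $\pd\Omega$ upgrades this to $G\in C^{1,\al}(\Omega^*)$. For the precise behavior at the origin, one rescales $G$ about $0$ and subtracts its value on $\pd\mathcal{W}_{\lambda}$; the Wulff-radial barriers above force the blow-up limit to be exactly $\Gamma$, and the theory of isolated singularities for quasilinear equations (Serrin; V\'{e}ron) then improves this to $g:=G-\Gamma\in C(\Omega)$ with $\lim_{x\to0}H^0(x)\nabla g(x)=0$. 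Uniqueness follows by comparison: if $G_1,G_2$ both satisfy the conclusion then $g_i:=G_i-\Gamma\in C(\Omega)$; for small $\eps$, both $G_1$ and $G_2+c_{\eps}$ with $c_{\eps}:=\sup_{\pd\mathcal{W}_{\eps}}(g_1-g_2)$ solve $-Q_pu=0$ in $\Omega\setminus\overline{\mathcal{W}_{\eps}}$ (since $Q_p$ sees only $\nabla u$), and since $G_1\le G_2+c_{\eps}$ on $\pd\Omega\cup\pd\mathcal{W}_{\eps}$, the weak comparison principle for the monotone operator $Q_p$ gives $G_1\le G_2+c_{\eps}$ in $\Omega\setminus\overline{\mathcal{W}_{\eps}}$; letting $\eps\to0$ and using continuity of $g_1-g_2$ at $0$ yields $g_1-g_1(0)\le g_2-g_2(0)$ on $\Omega^*$, hence by symmetry equality, and the constant vanishes since $g_1=g_2=-\Gamma$ on $\pd\Omega$, so $G_1\equiv G_2$.

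The delicate point is the precise asymptotic analysis at the isolated singularity---showing that $g=G-\Gamma$ is genuinely continuous up to $0$ with $H^0(x)\nabla g(x)\to0$, not merely $o(\Gamma)$---which requires transplanting the removable-singularity machinery for the $p$-Laplacian to the Finsler flux, together with the uniform-in-$\eps$ Wulff-radial comparison estimates ensuring that the limit $G$ carries precisely the singular profile $\Gamma$.
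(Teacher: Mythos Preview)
The paper does not prove this theorem; it is quoted verbatim from Wang--Xia \cite{Wang-Xia} as background material and no argument is supplied. There is therefore no ``paper's own proof'' to compare your proposal against.

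That said, your sketch follows the standard route one would find in the cited reference or in Kichenassamy--V\'eron type arguments for the $p$-Laplacian: compute the fundamental solution explicitly using the Finsler calculus of Proposition~\ref{Prop:identities}, regularize $\delta_0$ and obtain uniform gradient bounds via truncation (Boccardo--Gallou\"et), use Wulff-radial barriers for pointwise control, and pass to the limit via $C^{1,\alpha}$ compactness away from the singularity. The identification of the leading singular part and the asymptotics $H^0(x)\nabla g(x)\to 0$ are indeed the substantive steps, and you are right to flag them as such.

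One point deserves care in your uniqueness argument. You compare $G_1$ with $G_2+c_\eps$ on $\Omega\setminus\overline{\mathcal{W}_\eps}$, but to apply the comparison principle you need $G_1\le G_2+c_\eps$ on the \emph{whole} boundary $\partial\Omega\cup\partial\mathcal{W}_\eps$. On $\partial\Omega$ both $G_i$ vanish, so this requires $c_\eps\ge 0$, which is not guaranteed by your definition $c_\eps=\sup_{\partial\mathcal{W}_\eps}(g_1-g_2)$. The fix is straightforward---replace $c_\eps$ by $c_\eps^{+}=\max(c_\eps,0)$, run the argument, and then observe that since $h=g_1-g_2\in C(\overline{\Omega})$ vanishes on $\partial\Omega$, its maximum $M$ over $\overline{\Omega}$ is attained; if $M>0$ one applies the strong comparison principle for $Q_p$ on an annulus to derive a contradiction---but as written the step has a small gap.
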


We recall here useful regularity estimates which are valid for the Finsler $N$-Laplacian equations, under the assumption \eqref{Hess};
see Serrin \cite{Serrin}, Tolksdorf \cite{Tolksdorf}, DiBenedetto \cite{DiBenedetto} and Lieberman \cite{Lieberman}.

\begin{theorem}
\label{Theorem:Serrin}
Let $\Omega \subset \re^N$ be a smooth bounded domain.
Then the following statements are true.
\begin{enumerate}
\item[(1)] 
Let $u \in W^{1,N}(\Omega)$ be a weak solution of
$-Q_N u = f$ in $\Omega$, where $f \in L^q(\Omega)$ for some $q > 1$.
Then for any subdomain $\Omega' \subset\subset \Omega$, there exists a constant $C = C(\Omega, \Omega', q, N) > 0$ such that
\[
	\| u \|_{L^{\infty}(\Omega')} \le C \( \| f \|_{L^q(\Omega)} + \| u \|_{L^N(\Omega)} \)
\]
holds.
\item[(2)] 
Let $u \in W^{1,N}(\Omega)$ be a weak solution of $-Q_N u = f$ in $\Omega$.
Suppose $\| u \|_{L^{\infty}(\Omega)} \le a$ and $\| f \|_{L^{\infty}(\Omega)} \le b$ for some $a, b <\infty$. 
Then $u \in C^{1,\al}_{loc}(\Omega)$ for some $\al \in (0,1)$ 
and for any subdomain $\Omega' \subset\subset \Omega$, there exists a constant $C = C(\Omega, \Omega', a, b, \al) > 0$ such that
\[
	\| u \|_{C^{1,\al}(\Omega')} \le C
\]
holds.
If, in addition, $u$ satisfies the Dirichlet boundary condition $u = \phi$ on $\pd\Omega$ where $\phi \in C^{1,\beta}(\pd\Omega)$, $\beta \in (0,1)$,
then $u \in C^{1,\al}(\ol{\Omega})$ for some $\al \in (0,1)$ holds. 
\item[(3)] {\rm (Harnack inequality)} 
Let $u \in W^{1,N}(\Omega)$ be a nonnegative weak solution of $-Q_N u = f$ in $\Omega$. 
Suppose $\| f \|_{L^q(\Omega)} \le b$ for some $q > 1$. 
Then for any subdomain $\Omega' \subset\subset \Omega$, there exists a constant $C = C(\Omega, \Omega', q, b) > 0$ such that
\[
	\sup_{x \in \Omega'} u(x) \le C \( 1 + \inf_{x \in \Omega'} u(x) \)
\]
holds.
\end{enumerate}
\end{theorem}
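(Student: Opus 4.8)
\textbf{Proof proposal for Theorem \ref{Theorem:Serrin}.}

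The plan is to reduce each of the three statements to the classical regularity theory for quasilinear divergence–form equations of $N$-Laplacian type, after checking that the Finsler operator $Q_N$ satisfies the structure conditions required by that theory. First I would record the two standard forms of the operator. In divergence form, $Q_N u = \diver A(\nabla u)$ with $A(\xi) := H(\xi)^{N-1}(\nabla_\xi H)(\xi) = \nabla_\xi\bigl(\tfrac1N H^N(\xi)\bigr)$; since $H^N \in C^1(\re^N)$ and $H \in C^2(\re^N\setminus\{0\})$, the vector field $A$ is continuous on $\re^N$ with $A(0)=0$ and lies in $C^1(\re^N\setminus\{0\})$. In non-divergence form, $Q_N u = \sum_{i,j} a_{ij}(\nabla u)\pd_i\pd_j u$ with $a_{ij}(\xi) = {\rm Hess}(\tfrac{1}{N} H^N(\xi))_{i,j}$, as already noted in \S\ref{section:Introduction}; for $N \ge 3$ this operator degenerates precisely where $\nabla u$ vanishes, while for $N=2$ it is uniformly elliptic.

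Next I would verify the structure conditions. Because $\tfrac1N H^N$ is positively homogeneous of degree $N$, Euler's identity (equivalently Proposition \ref{Prop:identities}(2)) gives $A(\xi)\cdot\xi = H^N(\xi)$, so by \eqref{alpha_beta} one has the coercivity bound $A(\xi)\cdot\xi \ge \al^N|\xi|^N$; by Proposition \ref{Prop:identities}(1) together with \eqref{alpha_beta} one has the growth bound $|A(\xi)| \le C\beta^{N-1}|\xi|^{N-1}$. For the ellipticity of the linearization, the entries of $a_{ij}(\xi) = {\rm Hess}(\tfrac1N H^N)(\xi)$ are positively homogeneous of degree $N-2$ and, by hypothesis \eqref{Hess}, continuous and positive definite on the compact sphere $\{|\xi|=1\}$; hence there are constants $0 < \la_0 \le \Lambda_0$ with
\[
	\la_0 |\xi|^{N-2}|\eta|^2 \;\le\; \sum_{i,j} a_{ij}(\xi)\eta_i\eta_j \;\le\; \Lambda_0 |\xi|^{N-2}|\eta|^2, \qquad \xi \ne 0,\ \eta \in \re^N.
\]
These are exactly the degenerate-elliptic structure conditions (in the borderline case $p = N$) assumed by Serrin, Tolksdorf, DiBenedetto and Lieberman, and they are $x$-independent, which only simplifies matters.

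Finally I would invoke the corresponding theorems. Statement (1), the local $L^{\infty}$ bound for a $W^{1,N}$ weak solution of $-Q_N u = f$ with $f \in L^q(\Omega)$, follows from Serrin's local boundedness theory \cite{Serrin} (De Giorgi/Moser iteration adapted to $N$-growth), the $\|u\|_{L^N(\Omega)}$ term entering as the natural starting quantity of the iteration. Statement (3), the Harnack inequality for nonnegative weak solutions, follows likewise from \cite{Serrin} (or from Trudinger's Harnack inequality for quasilinear equations). For statement (2), once $\|u\|_{L^\infty(\Omega)} \le a$ and $\|f\|_{L^\infty(\Omega)} \le b$ we regard the equation as $\diver A(\nabla u) = -f$ with bounded right-hand side; the interior $C^{1,\al}_{loc}$ estimate is then the content of DiBenedetto \cite{DiBenedetto} (see also Tolksdorf \cite{Tolksdorf}), and the up-to-the-boundary estimate $u \in C^{1,\al}(\ol\Omega)$ under a $C^{1,\beta}(\pd\Omega)$ Dirichlet condition is Lieberman's boundary regularity theorem \cite{Lieberman}.

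The only point genuinely needing care is the integrability threshold in statement (1): because $p = N$, a naive Sobolev embedding is not available, and one must appeal to the refined Moser iteration valid in Serrin's framework, where any $q > 1$ suffices to run the argument. Beyond that, the whole proof is a matter of confirming that the uniform ellipticity and growth constants feeding the cited theorems are precisely the constants $\al,\beta,\la_0,\Lambda_0$ produced above by homogeneity and compactness of the unit sphere; with those in hand, nothing beyond the quoted results is required.
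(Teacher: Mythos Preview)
Your proposal is correct and follows exactly the route the paper takes: the paper gives no proof of this theorem at all, merely prefacing the statement with ``see Serrin \cite{Serrin}, Tolksdorf \cite{Tolksdorf}, DiBenedetto \cite{DiBenedetto} and Lieberman \cite{Lieberman}'' and treating it as a quotation from the literature. Your write-up is in fact more detailed than the paper's treatment, since you explicitly verify the $N$-growth, coercivity, and degenerate-ellipticity structure conditions (via the homogeneity of $H^N$, Proposition~\ref{Prop:identities}, and the positive-definiteness hypothesis \eqref{Hess}) before invoking the cited results; the paper leaves this verification entirely implicit.
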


Next is the result from \cite{Xie-Gong} (Theorem 1.1 and Theorem 1.2).

\begin{theorem}{\rm (Finsler Brezis-Merle type inequality \cite{Xie-Gong})}
\label{Theorem:BM}
Let $\Omega$ be a bounded domain in $\re^N$, $N \ge 2$.
\begin{enumerate}
\item[(1)] 
Suppose $u$ is a weak solution to
\begin{align*}
	\begin{cases}
		-Q_N u = f(x) &\quad \text{in} \ \Omega, \\
		u = 0 &\quad \text{on} \ \pd\Omega,
	\end{cases}
\end{align*}
where $f \in L^1(\Omega)$. 
Then for any $\eps \in (0, \beta_N)$ where $\beta_N = N(N\kappa_N)^{\frac{1}{N-1}}$,
it holds that
\[
	\intO \exp \( \frac{(\beta_N - \eps) |u(x)|}{\| f \|_{L^1(\Omega)}^{\frac{1}{N-1}}} \) dx \le \frac{\beta_N}{\eps} |\Omega|.
\]
\item[(2)] 
Suppose $u$ and $v$ are weak solutions to
\[
	-Q_N u = f(x) > 0 \quad \text{in} \ \Omega
\]
and
\[
	-Q_N v = 0 \quad \text{in} \ \Omega \quad v = u \quad \text{on} \ \pd\Omega,
\]
respectively. Then for any $\eps \in (0, \beta_N)$, we have
\[
	\intO \exp \( \frac{(\beta_N - \eps) d_N^{\frac{1}{N-1}} |u(x) - v(x)|}{\| f \|_{L^1(\Omega)}^{\frac{1}{N-1}}} \) dx \le \frac{|\Omega|}{\eps},
\]
where $d_N$ is defined in \eqref{d_N}.
\end{enumerate}
\end{theorem}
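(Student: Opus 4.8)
The plan is to derive both inequalities from the anisotropic (Wulff) isoperimetric inequality $P_H(E)\ge N\kappa_N^{1/N}|E|^{(N-1)/N}$ by a level--set argument in the spirit of Brezis and Merle \cite{Brezis-Merle}, adapted to the quasilinear operator $Q_N$ (for which there is no Green representation, so the original Brezis--Merle proof must be replaced by an isoperimetric one). First I would reduce to a normalized situation. Since $H$ is even, $Q_N$ is odd in the gradient, so $-Q_N(\pm u)=\pm f\le|f|$; comparing with the solution of $-Q_N\tilde u=|f|$ with zero boundary data (legitimate because $-Q_N$ is monotone under \eqref{Hess}) gives $|u|\le\tilde u$, reducing part (1) to $f\ge0$, $u\ge0$. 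In part (2), the comparison principle gives $w:=u-v\ge0$ in $\Omega$ with $w=0$ on $\pd\Omega$. Using the homogeneity $Q_N(cu)=c^{N-1}Q_N u$ and replacing $u$ (resp. $w$) by itself divided by $\|f\|_{L^1(\Omega)}^{1/(N-1)}$ we may assume $\|f\|_{L^1(\Omega)}=1$. Writing $\phi(t)=|\{u>t\}|$ (resp. $|\{w>t\}|$), it then suffices to prove the pointwise decay $\phi(t)\le|\Omega|e^{-\beta_N t}$ (resp. $\phi(t)\le|\Omega|e^{-\beta_N d_N^{1/(N-1)}t}$), because a layer--cake computation finishes the job: for $0<\la<\beta_N$,
\[
	\intO \exp(\la u)\,dx = |\{u>0\}| + \int_0^{\infty}\la e^{\la t}\phi(t)\,dt \le \frac{\beta_N}{\beta_N-\la}\,|\Omega|,
\]
and $\la=\beta_N-\eps$ gives part (1); the analogue with rate $\beta_N d_N^{1/(N-1)}$ and $\la=(\beta_N-\eps)d_N^{1/(N-1)}$ gives part (2).

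To get the decay of $\phi$ in case (1) I would work on the superlevel sets $\Omega_t=\{u>t\}$. Testing the equation with $\min\{1,s^{-1}(u-t)^+\}$, letting $s\to0$, and using Proposition \ref{Prop:identities}(2) (so that $H(\nabla u)^{N-1}\nabla_\xi H(\nabla u)\cdot\nabla u=H(\nabla u)^N$), one obtains for a.e. $t>0$ the flux identity $\int_{\{u=t\}}H(\nabla u)^N|\nabla u|^{-1}\,d\mathcal H^{N-1}=\int_{\Omega_t}f\,dx\le1$; the coarea formula gives $-\phi'(t)=\int_{\{u=t\}}|\nabla u|^{-1}\,d\mathcal H^{N-1}$ for a.e. $t$. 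Since $H$ is even, the Euclidean outer normal of $\Omega_t$ is $-\nabla u/|\nabla u|$, so $P_H(\Omega_t)=\int_{\{u=t\}}H(\nabla u)|\nabla u|^{-1}\,d\mathcal H^{N-1}$, and Hölder's inequality with exponents $N$ and $N/(N-1)$ applied to this integral gives
\[
	P_H(\Omega_t)\le\(\int_{\{u=t\}}\frac{H(\nabla u)^N}{|\nabla u|}\,d\mathcal H^{N-1}\)^{1/N}(-\phi'(t))^{(N-1)/N}\le(-\phi'(t))^{(N-1)/N}.
\]
Combining with the isoperimetric inequality $P_H(\Omega_t)\ge N\kappa_N^{1/N}\phi(t)^{(N-1)/N}$ and simplifying, $\big(N\kappa_N^{1/N}\big)^{N/(N-1)}=N(N\kappa_N)^{1/(N-1)}=\beta_N$, yields $-\phi'(t)/\phi(t)\ge\beta_N$, hence $\phi(t)\le\phi(0)e^{-\beta_N t}\le|\Omega|e^{-\beta_N t}$.

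For case (2) I would run the same computation with $w$ in place of $u$ and the equation written as $\diver\(A(\nabla u)-A(\nabla v)\)=-f$, where $A(\xi)=H(\xi)^{N-1}\nabla_\xi H(\xi)$. Integrating over $\{w>t\}$ and invoking the definition \eqref{d_N}, i.e. $\(A(X)-A(Y)\)\cdot(X-Y)\ge d_N H(X-Y)^N$ (so the boundary flux controls $d_N\int_{\{w=t\}}H(\nabla w)^N|\nabla w|^{-1}\,d\mathcal H^{N-1}$), gives $\int_{\{w=t\}}H(\nabla w)^N|\nabla w|^{-1}\,d\mathcal H^{N-1}\le d_N^{-1}\int_{\{w>t\}}f\,dx\le d_N^{-1}$. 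The extra factor $d_N^{-1}$ passes through Hölder and the isoperimetric inequality as an extra factor $d_N^{1/(N-1)}$ in the decay rate, giving $\phi(t)\le|\Omega|e^{-\beta_N d_N^{1/(N-1)}t}$, after which the layer--cake step completes the proof.

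I expect the main obstacle to be the rigorous justification of this level--set calculus when $u$ (resp. $w$) is only in $W^{1,N}_0(\Omega)$: that for a.e. $t$ the flux identity and the coarea expression for $\phi'(t)$ hold simultaneously, that the critical set $\{|\nabla u|=0\}$ — and in part (2) the set $\{\nabla u=\nabla v\}$, where the $d_N$ inequality degenerates — does not break the chain of inequalities, and that $\phi$ is absolutely continuous on the relevant interval so that the differential inequality can be integrated. Here Theorem \ref{Theorem:Serrin} helps: for $f\in L^\infty(\Omega)$ one has $u\in C^{1,\al}_{loc}(\Omega)$, so one can first prove the estimate for bounded (or smooth) $f$, where Sard's theorem makes a.e. level a regular value, and then pass to general $f\in L^1(\Omega)$ by approximation together with Fatou's lemma. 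The remaining steps are the routine manipulations indicated above.
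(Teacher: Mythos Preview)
The paper does not supply its own proof of this theorem: it is quoted verbatim as ``Theorem 1.1 and Theorem 1.2'' of \cite{Xie-Gong}, without argument. So there is nothing in the paper to compare your proposal against.

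That said, your outline is the standard and correct approach to this class of results (and is essentially the approach of \cite{Xie-Gong}): reduce to $f\ge 0$ and $\|f\|_{L^1}=1$, show the exponential decay $\phi(t)\le|\Omega|e^{-\beta_N t}$ (resp.\ $e^{-\beta_N d_N^{1/(N-1)}t}$) of the distribution function via the chain ``flux identity + coarea + H\"older + anisotropic isoperimetric inequality'', then integrate by layer cake. Two minor corrections: in your layer--cake line the first term should be $|\Omega|$, not $|\{u>0\}|$, since $e^{\la u}=1$ on $\{u=0\}$; this does not affect the final bound. And your method yields the constant $\beta_N|\Omega|/\eps$ also in part (2), which differs from the $|\Omega|/\eps$ printed in the statement; this is harmless (and is in any case a discrepancy in the quoted result, not in your argument). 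Your identification of the regularity issues (a.e.\ validity of the flux/coarea identities, absolute continuity of $\phi$, approximation of $L^1$ data by bounded $f$ plus Fatou) is the right list of technical points to address.
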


Next is the Pohozaev identity for the Finsler $q$-Laplacian problem without the boundary condition.
This is a special case of much more general identity proved in \cite{DG-M-S}.
The identity below is known to hold for solutions in $C^1(\ol{\Omega}) \cap C^2(\Omega)$.
The important point is that we can remove the condition $u \in C^2(\Omega)$ 
with the cost of the convexity and the $C^1(\re^N)$-regularity of the map $\re^N \ni \xi \mapsto H^q(\xi)$. 
This improvement is crucial for the application to the Finsler Laplacian problem,
since the best possible regularity result of solutions is $C^{1,\al}$, not $C^2$, see Theorem \ref{Theorem:Serrin}.

\begin{theorem}{\rm (\cite{DG-M-S})}
\label{Theorem:Pohozaev}
Let $1 < q < \infty$.
Let $u \in C^1(\ol{\Omega})$ be a weak solution of $-Q_q u = f(u)$ in $\Omega$,
where $\Omega \subset \re^N$ is a domain with the boundary of class $C^1$, and $f \in C(\re, \re)$.
Assume the map $\re^N \ni \xi \mapsto H^q(\xi)$ is convex and belongs to $C^1(\re^N)$.
Then the identity
\begin{align*}
	N \intO F(u) dx &- \( \frac{N-q}{q} \) \intO H^q(\nabla u) dx \\
	&= \int_{\pd\Omega} F(u)(x-y) \cdot \nu(x) ds_x \\
	&- \frac{1}{q} \int_{\pd\Omega} H^q(\nabla u)(x-y) \cdot \nu(x) ds_x \\
	&+ \int_{\pd\Omega} \( H^{q-1}(\nabla u) (\nabla_{\xi} H)(\nabla u) \cdot \nu(x) \) ((x-y) \cdot \nu(x)) ds_x \\
\end{align*}
holds true for any $y \in \re^N$. 
Here $\nu$ is the outer unit normal of $\pd\Omega$ and $F(s) = \int_0^s f(t) dt$.
\end{theorem}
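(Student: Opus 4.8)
The plan is to recognize the identity as the Pucci--Serrin variational identity for the Lagrangian $\mathcal{L}(s,\xi) = \tfrac1q H^q(\xi) - F(s)$ --- which carries no explicit $x$-dependence --- evaluated on the radial vector field $h(x) = x - y$, and then to carry this out in three steps. First, observe that $u \in C^1(\ol{\Omega})$ is by hypothesis a weak (distributional, i.e. tested against $C_0^\infty(\Omega)$) solution of the Euler--Lagrange equation $\diver(\mathcal{L}_\xi(u,\nabla u)) = \mathcal{L}_s(u,\nabla u)$, that is of $-Q_q u = f(u)$; here the structural assumptions play exactly the role that $\xi \mapsto H^q(\xi)$ be convex and of class $C^1(\re^N)$, so that $\xi \mapsto H^{q-1}(\xi)(\nabla_\xi H)(\xi) = \tfrac1q \nabla_\xi(H^q)(\xi)$ is a genuine continuous vector field, while $f \in C(\re,\re)$ gives $F \in C^1(\re)$ and hence $F \circ u \in C^1(\ol{\Omega})$. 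The second step is to establish the variational identity itself for such a merely $C^1$ solution; this is the substantive point, deferred to the next paragraph. Granting it, the third step is routine unwinding: by the $q$-homogeneity of $H^q$ and Euler's relation, $\mathcal{L}_\xi(u,\nabla u)\cdot\nabla u = \tfrac1q \nabla_\xi(H^q)(\nabla u)\cdot\nabla u = H^q(\nabla u)$, so the interior integrand $N\mathcal{L}(u,\nabla u) - \mathcal{L}_\xi(u,\nabla u)\cdot\nabla u$ collapses to $\tfrac{N-q}{q}H^q(\nabla u) - NF(u)$, while on $\pd\Omega$ one substitutes $\mathcal{L}_\xi = H^{q-1}\nabla_\xi H$; collecting the terms reproduces the left-hand side and the three surface integrals of the statement.

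For the second step --- establishing the variational identity when $u$ is only $C^1$ --- I would argue by an inner (domain) variation, which is precisely the device that removes the $C^2(\Omega)$-regularity required in the classical derivation. Fix $y$, and for $|t|$ small let $\Phi_t(x) = x + t(x-y)$ be the dilation by $1+t$ centered at $y$; set $\Omega_t = \Phi_t(\Omega)$ and $u_t = u \circ \Phi_t^{-1} \in C^1(\ol{\Omega_t})$. The change of variables $x = \Phi_t(z)$, with $\det D\Phi_t = (1+t)^N$ and $H^q(\nabla u_t(\Phi_t(z))) = (1+t)^{-q} H^q(\nabla u(z))$, gives
\[
	\int_{\Omega_t} \mathcal{L}(u_t,\nabla u_t)\,dx = (1+t)^{N-q}\intO \tfrac1q H^q(\nabla u)\,dz - (1+t)^N \intO F(u)\,dz ,
\]
so that $\frac{d}{dt}\big|_{t=0}\int_{\Omega_t}\mathcal{L}(u_t,\nabla u_t)\,dx = \tfrac{N-q}{q}\intO H^q(\nabla u)\,dz - N\intO F(u)\,dz$, the interior member of the identity --- and this computation uses only $u \in C^1(\ol{\Omega})$ and $H^q \in C^1$. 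On the other hand, the same derivative is computed by perturbing $u$ on the moving domain: since $(u_t - u)/t \to -(x-y)\cdot\nabla u$ locally uniformly and $u$ is stationary for $\int\mathcal{L}(\cdot,\nabla\cdot)$ under variations supported away from $\pd\Omega$, the only surviving contribution to $\frac{d}{dt}\big|_{t=0}\int_{\Omega_t}\mathcal{L}(u_t,\nabla u_t)\,dx$ is a boundary term, equal to $-\intdO\big[((x-y)\cdot\nu)\,\mathcal{L}(u,\nabla u) - ((x-y)\cdot\nabla u)\,(\mathcal{L}_\xi(u,\nabla u)\cdot\nu)\big]\,ds_x$. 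Equating the two evaluations of the derivative and rearranging gives the identity.

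The hard part is making the second evaluation rigorous with only $u \in C^1$: the difference quotient $(u_t - u)/t$ is not bounded in $W^{1,q}$ --- a bound that would require second derivatives of $u$ --- and $\pd\Omega$ genuinely moves, so one cannot simply insert $(x-y)\cdot\nabla u$ as a test function in the weak formulation. This is exactly where convexity is used: the subgradient inequality $\tfrac1q H^q(\eta) \ge \tfrac1q H^q(\xi) + \tfrac1q\nabla_\xi(H^q)(\xi)\cdot(\eta - \xi)$ lets one sandwich $\int_{\Omega_t}\mathcal{L}(u_t,\nabla u_t)\,dx$ between expressions built from the solution transplanted between $\Omega$ and $\Omega_t$, so that the one-sided difference quotients of $t \mapsto \int_{\Omega_t}\mathcal{L}(u_t,\nabla u_t)\,dx$ pinch to the asserted derivative without ever differentiating $u$ twice; the $f$-dependent terms are handled in parallel by the elementary identity $(x-y)\cdot\nabla(F(u)) = \diver((x-y)F(u)) - NF(u)$, valid since $F\circ u \in C^1(\ol{\Omega})$. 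Alongside this I would verify the borderline integrability $H^q(\nabla u),\ f(u)\,(x-y)\cdot\nabla u \in L^1(\Omega)$ and check that $u \in C^1(\ol{\Omega})$ with $\pd\Omega$ of class $C^1$ legitimizes each surface integral. This mechanism --- convexity together with $C^1$-regularity of $\xi \mapsto H^q(\xi)$ standing in for the $C^2$-regularity of $u$ --- is that of \cite{DG-M-S}, of whose general variational identity the present statement is a special case.
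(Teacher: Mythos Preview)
Your proposal is correct and takes essentially the same approach as the paper: both recognize the identity as the Pucci--Serrin variational identity for the Lagrangian $\mathcal{L}(s,\xi)=\tfrac1q H^q(\xi)-F(s)$, and both invoke \cite{DG-M-S} for the crucial point that convexity and $C^1$-regularity of $\xi\mapsto H^q(\xi)$ substitute for the classical $C^2$-assumption on $u$. The paper's proof simply cites the relevant lemmas of \cite{DG-M-S} and specializes the general vector-field identity to $h(x)=x-y$, whereas you spell out more of the underlying domain-variation mechanism (dilations centered at $y$, convexity sandwiching) before pointing to the same reference; the content is the same.
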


\begin{proof}
Indeed, since $\mathcal{L}(x, s, \xi) = \frac{1}{q} H^q(\xi) - F(s)$ is of the ``splitting" form, $F \in C^1(\re)$, and $\xi \mapsto H^q(\xi)$ is convex and in $C^1(\re^N)$,
Lemma 5, thus the equation (3) in \cite{DG-M-S} holds as it is.
Also, if we do not impose the boundary condition $u = 0$ on $\pd\Omega$ (and put $f = 0$ there) in Lemma 2 in \cite{DG-M-S}, 
we obtain the identity
\begin{align*}
	&\intdO \mathcal{L}(x, u, \nabla u) (h \cdot \nu) ds_x -  \sum_{i,j=1}^N \intdO h_j D_{\xi_i} \mathcal{L}(x, u, \nabla u) D_{x_j} u \nu_i ds_x \\
	&\hspace{2em} = \intO (\diver h) \mathcal{L}(x, u, \nabla u) dx -  \sum_{i,j=1}^N \intdO D_i h_j D_{\xi_i} \mathcal{L}(x, u, \nabla u) D_{x_j} u dx
\end{align*}
for every $h \in C^1(\ol{\Omega}, \re^N)$.
%the boundary integration term
%\[
%	\intdO [ \mathcal{L}(x, 0, \nabla u) - \nabla_{\xi} \mathcal{L}(x, 0, \nabla u) \cdot \nabla u ] (h \cdot \nu) ds_x
%\]
%changes to
%\[
%	\intdO \mathcal{L}(x, u, \nabla u) (h \cdot \nu) ds_x -  \sum_{i,j=1}^N \intdO h_j D_{\xi_i} \mathcal{L}(x, u, \nabla u) D_{x_j} u \nu_i ds_x,
%\]
%and the volume integration terms are unchanged.
Inserting $h(x) = x$ leads to the claim.
\end{proof}

Finally, we prove the following simple lemma.
\begin{lemma}
\label{Lemma:weak}
Let $u \in W^{1,N}_0(\Omega)$ be a weak solution to $-Q_N u = f(u)$ in $ \Omega \subset \re^N$, where $f:\re \to \re$ is continuous.
Let $a, c > 0$, $d \in \re$ and $b \in \re^N$.
Then $v(x) = c u(ax + b) + d$, $x \in \Omega_{a,b} = \frac{\Omega - b}{a}$ is a weak solution to
\[
	-Q_N v = a^N c^{N-1} f\(\frac{v-d}{c}\) \quad \text{in} \ \Omega_{a, b}, \quad v = 0 \quad \text{on} \ \pd \Omega_{a,b}.
\]
\end{lemma}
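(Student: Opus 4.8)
The plan is to verify the weak formulation of the equation for $v$ by a direct affine change of variables, the only structural inputs being the $1$-homogeneity of $H$ and the $0$-homogeneity of $\nabla_\xi H$ recorded in Proposition~\ref{Prop:identities}(3). Recall that $u\in W^{1,N}_0(\Omega)$ being a weak solution of $-Q_N u = f(u)$ means
\[
	\intO H(\nabla u)^{N-1}\,(\nabla_\xi H)(\nabla u)\cdot\nabla\varphi\,dx = \intO f(u)\,\varphi\,dx
	\qquad\text{for all }\varphi\in C_c^\infty(\Omega),
\]
which makes sense since $u\in C^{1,\al}(\ol{\Omega})$ and $f$ is continuous. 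I would fix a test function $\psi\in C_c^\infty(\Omega_{a,b})$ and set $\varphi(y):=\psi\!\left(\frac{y-b}{a}\right)\in C_c^\infty(\Omega)$. Writing $y=ax+b$, one has $\nabla v(x)=ca\,(\nabla u)(y)$, so, since $ca>0$, the relations $H(t\xi)=|t|H(\xi)$ and $(\nabla_\xi H)(t\xi)=\frac{t}{|t|}(\nabla_\xi H)(\xi)$ give
\[
	H(\nabla v(x))^{N-1}\,(\nabla_\xi H)(\nabla v(x)) = (ca)^{N-1}\,H\!\big((\nabla u)(y)\big)^{N-1}\,(\nabla_\xi H)\!\big((\nabla u)(y)\big).
\]

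Next I would plug this into $\int_{\Omega_{a,b}} H(\nabla v)^{N-1}(\nabla_\xi H)(\nabla v)\cdot\nabla\psi\,dx$ and change variables $x=\frac{y-b}{a}$, using $dx=a^{-N}\,dy$ and $\nabla\psi(x)=a\,\nabla\varphi(y)$. The powers of $a$ collapse, $(ca)^{N-1}\cdot a\cdot a^{-N}=c^{N-1}$, so the integral equals $c^{N-1}\intO H(\nabla u)^{N-1}(\nabla_\xi H)(\nabla u)\cdot\nabla\varphi\,dy$. Applying the weak formulation for $u$ against $\varphi$ and then changing variables back (which produces a factor $a^{N}$), together with $\frac{v(x)-d}{c}=u(ax+b)$, this becomes $a^{N}c^{N-1}\int_{\Omega_{a,b}} f\!\left(\frac{v-d}{c}\right)\psi\,dx$. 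Since $\psi\in C_c^\infty(\Omega_{a,b})$ was arbitrary, $v$ is a weak solution of $-Q_N v = a^{N}c^{N-1} f\!\left(\frac{v-d}{c}\right)$ in $\Omega_{a,b}$.

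Finally, for the Dirichlet condition I would note that the affine map $x\mapsto ax+b$ carries $W^{1,N}_0(\Omega)$ isomorphically onto $W^{1,N}_0(\Omega_{a,b})$, so from $u\in W^{1,N}_0(\Omega)$ we obtain $v-d=c\,u(a\cdot+b)\in W^{1,N}_0(\Omega_{a,b})$, which is precisely the boundary condition (reading literally $v=0$ on $\pd\Omega_{a,b}$ in the case $d=0$). There is no genuine obstacle in this argument; the only point requiring care is the bookkeeping of the homogeneity exponents and of the Jacobian factor, which is why I would track every power of $a$ and $c$ explicitly as above.
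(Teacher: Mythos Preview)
Your argument is correct and is essentially identical to the paper's own proof: both fix a test function on $\Omega_{a,b}$, pull it back to $\Omega$ via $y=ax+b$, use the homogeneity relations \eqref{homo_H} and Proposition~\ref{Prop:identities}(3) to extract the factor $(ca)^{N-1}$, apply the weak formulation for $u$, and change variables back to produce the factor $a^N c^{N-1}$. Your additional remark that the boundary condition is literally $v=d$ (and $v=0$ only when $d=0$) is a valid observation that the paper does not make explicit.
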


\begin{proof}
For $x \in \Omega_{a,b}$, put $y = ax + b \in \Omega$.
Then for any $\phi \in C_0^{\infty}(\Omega_{a,b})$, $\tilde{\phi}(y) = \phi(x)$ belongs to $C_0^{\infty}(\Omega)$.
Therefore we have
\begin{align*}
	&\int_{\Omega_{a,b}} H^{N-1}(\nabla v(x)) (\nabla_{\xi} H)(\nabla v(x)) \cdot \nabla \phi(x) dx \\
	&= \int_{\Omega_{a,b}} H^{N-1}(c a (\nabla u)(ax + b)) (\nabla_{\xi} H)(ca (\nabla u) (ax + b)) \cdot \nabla \phi(x) dx \\
	&= \int_{\Omega} c^{N-1} a^{N-1} H^{N-1}(\nabla u(y)) (\nabla_{\xi} H)((\nabla u(y)) \cdot a \nabla \tilde{\phi}(y) a^{-N} dy \\
	&= c^{N-1} \int_{\Omega} H^{N-1}(\nabla u(y)) (\nabla_{\xi} H)(\nabla u(y)) \cdot \nabla \tilde{\phi}(y) dy \\
	&= c^{N-1} \int_{\Omega} f(u(y)) \tilde{\phi}(y) dy \\
	&= c^{N-1} \int_{\Omega_{a,b}} f\( \frac{v(x) - d}{c} \) \phi(x) a^N dx,
\end{align*}
where we have used \eqref{homo_H} and Proposition \ref{Prop:identities} (3). 
Thus we see
\begin{align*}
	\int_{\Omega_{a,b}} H^{N-1}(\nabla v(x)) (\nabla_{\xi} H)(\nabla v(x)) \cdot \nabla \phi(x) dx = a^N c^{N-1} \int_{\Omega_{a,b}} f\( \frac{v(x) - d}{c} \) \phi(x) dx.
\end{align*}
This holds true for any $\phi \in C_0^{\infty}(\Omega_{a,b})$, which implies Lemma.
\end{proof}

%%%%%%%%%%%%%%%%%%%%%%%%%%%%%%%%%%%%%%%%%%%%%%%%%%%%%%%%%%%%%%%%%%%%%%%%%%%%%%%%%%%%%%%%%%%%%%%%%%%%%%%%%%%%%%%

\section{Asymptotic estimate for $C_p$}
\label{section:asymptotics}

In this section, first by using the Finsler Trudinger-Moser inequality Theorem \ref{Theorem:TM}, we establish the refined Sobolev embedding. 

\begin{lemma}
\label{Lemma:refined_Sobolev}
Let $\Omega \subset \re^N$ be a bounded domain.
For any $t \ge 2$, there exists $D_t >0$ such that for any $u \in W^{1,N}_0(\Omega)$,
\[
	\Vert u \Vert_{L^t(\Omega)} \le D_t t^{\frac{N-1}{N}} \Vert H(\nabla u) \Vert_{L^N(\Omega)}
\]
holds true.
Furthermore, we have
\[
	\lim_{t \to \infty} D_t = \( \frac{1}{N \kappa_N^{1/N}} \) \( \frac{N-1}{Ne} \)^{\frac{N-1}{N}}.
\]
\end{lemma}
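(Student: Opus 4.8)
The plan is to derive the inequality from the Finsler Trudinger--Moser inequality (Theorem~\ref{Theorem:TM}) by expanding the exponential in a power series and extracting the $t$-th term, keeping careful track of the constant. First I would reduce to the normalized case: by homogeneity it suffices to prove the estimate for $u \in W^{1,N}_0(\Omega)$ with $\Vert H(\nabla u)\Vert_{L^N(\Omega)} = 1$, i.e. $\intO H(\nabla u)^N dx = 1$, and then show $\Vert u\Vert_{L^t(\Omega)} \le D_t t^{(N-1)/N}$ with $D_t$ having the claimed limit; the general case follows by scaling $u \mapsto u/\Vert H(\nabla u)\Vert_{L^N}$.

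For the normalized $u$, Theorem~\ref{Theorem:TM} gives $\intO \exp\bigl(\beta_N |u|^{N/(N-1)}\bigr)dx \le C|\Omega|$. Expanding the exponential,
\[
	\sum_{k=0}^{\infty} \frac{\beta_N^k}{k!} \intO |u|^{\frac{Nk}{N-1}} dx \le C|\Omega|,
\]
so each individual term is bounded: for the index $k$ with $\frac{Nk}{N-1} = t$, i.e. $k = \frac{(N-1)t}{N}$ (ignoring for the moment that $k$ need not be an integer — one handles this by choosing $k = \lceil (N-1)t/N\rceil$ and using H\"older/interpolation between two integer exponents, or by monotonicity of $L^t$-norms on finite-measure domains, which only costs a factor tending to $1$), we get
\[
	\frac{\beta_N^{k}}{k!}\intO |u|^{t}dx \le C|\Omega|,
\]
hence $\Vert u\Vert_{L^t(\Omega)} \le \bigl(C|\Omega| \, k!\, \beta_N^{-k}\bigr)^{1/t}$. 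Now apply Stirling's formula $k! \sim \sqrt{2\pi k}\,(k/e)^k$ to obtain, with $k \approx (N-1)t/N$,
\[
	\bigl(k!\,\beta_N^{-k}\bigr)^{1/t} \sim \Bigl(\frac{k}{e\,\beta_N}\Bigr)^{k/t} = \Bigl(\frac{(N-1)t}{N e\,\beta_N}\Bigr)^{\frac{N-1}{N}} = t^{\frac{N-1}{N}}\Bigl(\frac{N-1}{N e\,\beta_N}\Bigr)^{\frac{N-1}{N}},
\]
and the factor $(C|\Omega|)^{1/t} \to 1$. Since $\beta_N = N(N\kappa_N)^{1/(N-1)}$, one computes $\bigl(\frac{N-1}{Ne\beta_N}\bigr)^{(N-1)/N} = \bigl(\frac{1}{N\kappa_N^{1/N}}\bigr)\bigl(\frac{N-1}{Ne}\bigr)^{(N-1)/N}$, which is exactly the claimed limit of $D_t$. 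To get a uniform constant $D_t$ valid for \emph{all} $t\ge 2$ (not just in the limit), one takes $D_t$ to be the supremum over $u$ of the ratio; finiteness for each fixed $t$ is just the standard Sobolev embedding $W^{1,N}_0 \hookrightarrow L^t$ combined with \eqref{alpha_beta}, and the asymptotic computation above controls the limit.

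The main obstacle is the bookkeeping needed to make the non-integrality of $k = (N-1)t/N$ harmless while still obtaining the \emph{sharp} constant in the limit. The safe route is: for given $t$, let $m = \lfloor (N-1)t/N\rfloor$ and $m+1$ be the bracketing integers, write $\frac{Nm}{N-1} \le t \le \frac{N(m+1)}{N-1}$, interpolate $\Vert u\Vert_{L^t} \le \Vert u\Vert_{L^{Nm/(N-1)}}^{1-\theta}\Vert u\Vert_{L^{N(m+1)/(N-1)}}^{\theta}$ for the appropriate $\theta\in[0,1]$, bound each factor by the corresponding power-series term as above, and check that the resulting constant, after Stirling, still has the stated limit as $t\to\infty$ (the discrepancy between $m$, $m+1$ and $(N-1)t/N$ contributes only lower-order multiplicative errors $\to 1$). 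One should also note that optimality of $\beta_N$ in Theorem~\ref{Theorem:TM} is what forces the constant to be sharp and not merely an upper bound — a matching lower bound for $\limsup D_t$ comes from testing with the Moser-type sequence that saturates the Trudinger--Moser inequality, though for the statement as written only the upper estimate on $\lim D_t$ together with its attainment is needed.
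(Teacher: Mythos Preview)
Your approach is essentially the paper's, but you have introduced an unnecessary obstacle. The paper does not expand $e^x$ as a power series and then worry about the integrality of the index; instead it uses the elementary pointwise inequality
\[
	\frac{x^{s}}{\Gamma(s+1)} \le e^{x}, \qquad x\ge 0,\ s\ge 0,
\]
with the \emph{real} value $s=\tfrac{N-1}{N}t$ and $x=\beta_N\bigl(|u|/\Vert H(\nabla u)\Vert_{L^N}\bigr)^{N/(N-1)}$. Integrating and invoking Theorem~\ref{Theorem:TM} gives directly
\[
	\intO |u|^{t}\,dx \le \Gamma\!\Bigl(\tfrac{N-1}{N}t+1\Bigr)\,C|\Omega|\,\beta_N^{-\frac{N-1}{N}t}\,\Vert H(\nabla u)\Vert_{L^N}^{t},
\]
so one can simply \emph{define}
\[
	D_t = \Gamma\!\Bigl(\tfrac{N-1}{N}t+1\Bigr)^{1/t} (C|\Omega|)^{1/t}\,\beta_N^{-\frac{N-1}{N}}\,t^{-\frac{N-1}{N}}
\]
and compute its limit by Stirling. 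No interpolation between bracketing integers is needed.

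A second point: the statement only asserts the existence of \emph{some} constant $D_t$ with the stated limit, not that the best constant in the embedding has this limit. So your discussion of a matching lower bound via Moser-type test functions is not required here; that argument belongs to (and is carried out in) the subsequent proposition on the asymptotics of $C_p$.
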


\begin{proof}
Let $u \in W^{1,N}_0(\Omega)$.
By the elementary inequality $\frac{x^s}{\Gamma(s+1)} \le e^x$ for $x \ge 0$ and $s \ge 0$,
where $\Gamma(s)$ is the Gamma function,
and the Finsler Trudinger-Moser inequality,
we have
\begin{align*}
	&\frac{1}{\Gamma(\frac{N-1}{N} t+1)} \intO |u|^t dx \\
	&= \frac{1}{\Gamma(\frac{N-1}{N} t+1)} \intO \( \beta_N \( \frac{|u|}{\Vert H(\nabla u) \Vert_{L^N(\Omega)}} \)^{\frac{N}{N-1}} \)^{\frac{N-1}{N} t} dx
	\beta_N^{-\frac{N-1}{N}t} \Vert H(\nabla u) \Vert_{L^N(\Omega)}^t \\
	&\le \intO \exp \( \( \beta_N \frac{|u(x)|}{\Vert \nabla u \Vert_{L^N(\Omega)}} \)^{\frac{N}{N-1}} \) dx \beta_N^{-\frac{N-1}{N}t} \Vert H(\nabla u) \Vert_{L^N(\Omega)}^t  \\
	&\le C |\Omega| \beta_N^{-\frac{N-1}{N}t} \Vert H(\nabla u) \Vert_{L^N(\Omega)}^t.
\end{align*}
Put
\[
	D_t = \Gamma\(\frac{N-1}{N} t+1\)^{1/t} C^{1/t} |\Omega|^{1/t} \beta_N^{-\frac{N-1}{N}} t^{-\frac{N-1}{N}}. 
\]
Then we have 
\[ 
	\Vert u \Vert_{L^t(\Omega)} \le D_t t^{\frac{N-1}{N}} \Vert H(\nabla u) \Vert_{L^N(\Omega)}.
\]
Stirling's formula implies that 
\[
	\(\Gamma\(\frac{(N-1)t}{N}+1\)\)^{\frac{1}{t}} \sim \(\frac{N-1}{Ne}\)^{\frac{N-1}{N}} t^{\frac{N-1}{N}} 
\]
as $t \to \infty$.
So we have
\[	
	\lim_{t \to \infty} D_t = \beta_N^{-\frac{N-1}{N}} \(\frac{N-1}{Ne}\)^{\frac{N-1}{N}} = \( \frac{1}{N \kappa_N^{1/N}} \) \( \frac{N-1}{Ne} \)^{\frac{N-1}{N}},
\]
which is a desired result.
\end{proof}

Recall that $C_p$ is defined in (\ref{Cp}).
Using the above Lemma and energy comparison, we get the following.

\begin{proposition}
\label{Prop:C_p_asymptotics}
We have
\[
	\lim_{p \to \infty} p^{N-1} C_p =  \( \frac{Ne}{N-1} \beta_N \)^{N-1}.
\]
where $\beta_N = N(N\kappa_N)^{\frac{1}{N-1}}$.
\end{proposition}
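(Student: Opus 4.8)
The plan is to bound $p^{N-1}C_p$ from above and below separately, and show both bounds converge to $(\tfrac{Ne}{N-1}\beta_N)^{N-1}$ as $p\to\infty$.

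For the upper bound I would use the refined Sobolev inequality of Lemma~\ref{Lemma:refined_Sobolev} directly. Taking any $u\in W^{1,N}_0(\Omega)$ with $\|u\|_{L^{p+1}(\Omega)}=1$, the inequality with $t=p+1$ gives $1=\|u\|_{L^{p+1}(\Omega)}\le D_{p+1}(p+1)^{\frac{N-1}{N}}\|H(\nabla u)\|_{L^N(\Omega)}$, hence $\intO H(\nabla u)^N\,dx\ge \big(D_{p+1}(p+1)^{\frac{N-1}{N}}\big)^{-N}$. Taking the infimum over such $u$ yields $C_p\ge \big(D_{p+1}\big)^{-N}(p+1)^{-(N-1)}$, so
\[
	\liminf_{p\to\infty} p^{N-1}C_p \ge \lim_{t\to\infty} D_t^{-N} = \beta_N^{N-1}\(\frac{Ne}{N-1}\)^{N-1},
\]
using the limit of $D_t$ computed in Lemma~\ref{Lemma:refined_Sobolev} together with $p^{N-1}/(p+1)^{N-1}\to 1$. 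This gives the lower bound for $\liminf p^{N-1}C_p$.

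For the matching upper bound on $C_p$ (i.e. the $\limsup$ estimate) I would use an explicit test function: the anisotropic analogue of the Moser-Trudinger extremal family. Since $\beta_N$ is the optimal constant in the Finsler Trudinger--Moser inequality (Theorem~\ref{Theorem:TM}), the optimality statement there provides, or one can construct by hand, a family of truncated "log"-type functions modeled on the fundamental solution $\Gamma(x)=(N\kappa_N)^{-\frac{1}{N-1}}\log\frac{1}{H^0(x)}$ of $Q_N$ (Theorem~\ref{Theorem:Green}): roughly $u_\eps(x)=\min\{(N\kappa_N)^{-\frac1{N-1}}\log\frac1{H^0(x)},\ (N\kappa_N)^{-\frac1{N-1}}\log\frac1\eps\}$ suitably normalized so that $\intO H(\nabla u_\eps)^N\,dx$ is close to (a multiple of) $N\kappa_N$. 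Substituting such $u$ into the quotient defining $C_p$, normalizing in $L^{p+1}$, and optimizing the scale $\eps$ as a function of $p$ should yield $\limsup_{p\to\infty}p^{N-1}C_p\le \beta_N^{N-1}(\tfrac{Ne}{N-1})^{N-1}$. The anisotropy enters only through the coarea formula with the $H$-perimeter and the identities in Proposition~\ref{Prop:identities} (in particular $H(\nabla H^0)=1$ and $P_H(\mathcal W,\re^N)=N\kappa_N$), so the computation parallels the Euclidean case.

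The main obstacle I expect is the upper-bound (test-function) half: one must carry out the energy computation for the anisotropic extremal family carefully, making sure the $L^{p+1}$-normalization and the optimization over the concentration parameter produce exactly the constant $\big(\tfrac{Ne}{N-1}\beta_N\big)^{N-1}$ rather than something off by a dimensional factor, and one must control the lower-order contributions from the "outer" region of $\Omega$ where $u_\eps$ is constant. The lower bound, by contrast, is an immediate consequence of Lemma~\ref{Lemma:refined_Sobolev}. Combining the two inequalities gives $\lim_{p\to\infty}p^{N-1}C_p=\big(\tfrac{Ne}{N-1}\beta_N\big)^{N-1}$, as claimed.
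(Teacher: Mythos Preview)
Your approach is essentially the same as the paper's: the lower bound on $p^{N-1}C_p$ comes directly from Lemma~\ref{Lemma:refined_Sobolev} with $t=p+1$, and the upper bound is obtained by testing $C_p$ against the Finsler Moser function (the truncated $\log(1/H^0(x))$ profile you describe), then optimizing the truncation scale in $p$. The paper carries out exactly this computation, choosing $l=L\exp\bigl(-\tfrac{N-1}{N^2}(p+1)\bigr)$ to get $\limsup_{p\to\infty}(p+1)^{N-1}C_p\le\bigl(\tfrac{Ne}{N-1}\beta_N\bigr)^{N-1}$; note that your first paragraph opens with ``upper bound'' but actually derives the lower bound, a harmless slip.
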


\begin{proof}
Lower bound $\liminf_{p \to \infty} (p+1)^{N-1} C_p \ge \( \frac{Ne}{N-1} \beta_N \)^{N-1}$ is a direct consequence of Lemma \ref{Lemma:refined_Sobolev} and the fact 
\begin{equation}
\label{C_p2}
	C_p = \frac{\| H(\nabla u_p) \|_{L^N(\Omega)}^N}{\| u_p \|_{L^{p+1}(\Omega)}^N}
\end{equation}
for least energy solutions $u_p$.

Therefore we must prove only the upper bound. 
We will do this by constructing a suitable test function for the value $C_p$.

We may assume that $0 \in \Omega$ and $\mathcal{W}_L \subset \Omega$ where $\mathcal{W}_L = \{ x \in \re^N : H^0(x) < L \}$.
For $0<l<L$, consider the Finsler Moser function
\[
	m_l(x) = \frac{1}{(N\kappa_N)^{1/N}} 
	\begin{cases}
	\( \log \frac{L}{l} \)^{\frac{N-1}{N}}, \quad &0 \le H^0(x) \le l, \\
	\frac{\log \frac{L}{H^0(x)}}{(\log \frac{L}{l})^{\frac{1}{N}}}, \quad &l \le H^0(x) \le L, \\
	0, \quad &L \le H^0(x).
	\end{cases}
\]
We check that the Moser function $m_l \in W^{1,N}_0(\Omega)$ and $\| H(\nabla m_l) \|_{L^N(\Omega)} = 1$. 
Also it is easily checked that
\[
	\(\intO m_l^{p+1} dx \)^{\frac{1}{p+1}} \ge \(\int_{\mathcal{W}_l} m_l^{p+1} dx \)^{\frac{1}{p+1}} \ge \frac{1}{(N\kappa_N)^{1/N}} \( \log \frac{L}{l} \)^{\frac{N-1}{N}} \( l^N \kappa_N \)^{\frac{1}{p+1}}.
\]
Choosing $l = L \exp \(-(\frac{N-1}{N^2}) (p+1) \)$, we have
\[
	\| m_l \|_{L^{p+1}(\Omega)} \ge \frac{1}{(N\kappa_N)^{1/N}} \(\frac{N-1}{N^2} \)^{\frac{N-1}{N}} e^{-\frac{N-1}{N}} (p+1)^{\frac{N-1}{N}} \( L^N \kappa_N \)^{\frac{1}{p+1}}.
\]
and
\[
	C_p \le \frac{\Vert H(\nabla m_l) \Vert_{L^N(\Omega)}^N}{\Vert m_l \Vert_{L^{p+1}(\Omega)}^N} \le N\kappa_N \( \frac{N^2e}{N-1} \)^{N-1} (p+1)^{-(N-1)} (L^N \kappa_N)^{-\frac{N}{p+1}},
\]
which implies $\limsup_{p \to \infty} (p+1)^{N-1} C_p \le  \( \frac{Ne}{N-1} \beta_N \)^{N-1}$.
\end{proof}

Since 
\[
	\intO H(\nabla u_p)^N dx = \intO u_p^{p+1} dx
\]
and \eqref{C_p2}, we have the following lemma.
\begin{lemma}
\label{Lemma:energy}
\[
	\lim_{p \to \infty} p^{N-1} \intO H(\nabla u_p)^N dx = \lim_{p \to \infty} p^{N-1} \intO u_p^{p+1} dx = \( \frac{Ne}{N-1} \beta_N \)^{N-1}.
\]
\end{lemma}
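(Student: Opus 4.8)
The plan is to deduce this lemma directly from Proposition \ref{Prop:C_p_asymptotics}, so there is almost nothing new to do. First I would record the two elementary identities behind the reduction. Testing the equation $-Q_N u_p = u_p^p$ against $u_p \in W^{1,N}_0(\Omega)$ and using Proposition \ref{Prop:identities} (2), i.e. $(\nabla_\xi H)(\xi)\cdot \xi = H(\xi)$, gives
\[
	\intO H(\nabla u_p)^N dx = \intO u_p^{p+1} dx =: E_p ,
\]
while \eqref{C_p2} reads $C_p = E_p \big/ \| u_p \|_{L^{p+1}(\Omega)}^N = E_p^{\,1 - \frac{N}{p+1}} = E_p^{\frac{p+1-N}{p+1}}$, hence $E_p = C_p^{\frac{p+1}{p+1-N}}$.

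Next I would split this power as $E_p = C_p \cdot C_p^{\frac{N}{p+1-N}}$ and control the second factor. By Proposition \ref{Prop:C_p_asymptotics} we have $C_p = \big(\tfrac{Ne}{N-1}\beta_N\big)^{N-1} p^{-(N-1)}\,(1+o(1))$, so $\log C_p = -(N-1)\log p + O(1)$ and therefore
\[
	\frac{N}{p+1-N}\log C_p = \frac{-N(N-1)\log p + O(1)}{p+1-N} \longrightarrow 0 \qquad (p\to\infty),
\]
which shows $C_p^{\frac{N}{p+1-N}} \to 1$. Multiplying by $p^{N-1}$ and applying Proposition \ref{Prop:C_p_asymptotics} once more,
\[
	p^{N-1} E_p = \big(p^{N-1} C_p\big)\cdot C_p^{\frac{N}{p+1-N}} \longrightarrow \Big(\frac{Ne}{N-1}\beta_N\Big)^{N-1},
\]
and recalling $E_p = \intO H(\nabla u_p)^N dx = \intO u_p^{p+1} dx$ yields the claim.

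There is no genuine obstacle in this argument, since the substantive analytic work is already contained in Proposition \ref{Prop:C_p_asymptotics} (the construction of the Finsler Moser test function for the upper bound on $C_p$ and the refined Sobolev inequality for the lower bound). The only point that warrants a moment's care is that the exponent $\frac{p+1}{p+1-N}$ tends to $1$ but not quickly enough to be discarded without comment: one must verify that the correction factor $C_p^{N/(p+1-N)}$ actually converges to $1$, which is exactly what the logarithmic blow-up rate of $C_p^{-1}$ furnished by Proposition \ref{Prop:C_p_asymptotics} guarantees.
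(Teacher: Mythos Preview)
Your proof is correct and follows essentially the same approach as the paper: both derive the lemma directly from Proposition~\ref{Prop:C_p_asymptotics} together with the energy identity $\intO H(\nabla u_p)^N dx = \intO u_p^{p+1} dx$ and \eqref{C_p2}. The paper's version is much terser and does not spell out the verification that $C_p^{N/(p+1-N)} \to 1$, which you handle carefully; this is a welcome addition but not a different idea.
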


\section{Proof of Theorem \ref{Theorem:bound}}
\label{section:bound}

To obtain a lower bound for $\| u_p \|_{L^{\infty}(\Omega)}$,
define the first eigenvalue of the Finsler $N$-Laplacian $Q_N$:
\[
	\la_1(\Omega) = \inf \{ \intO H(\nabla u)^N dx: u \in W^{1,N}_0(\Omega), \int_{\Omega} |u|^N dx =1 \}.
\]
It is known that $0 < \la_1(\Omega) < \infty$ and
\[
	\intO u_p^{p+1} dx = \intO H(\nabla u_p)^N dx \ge \la_1(\Omega) \intO u_p^N dx.
\]
Thus 
\[
	\intO ( u_p^{p+1} - \la_1(\Omega) u_p^N) dx \ge 0,
\]
which implies 
\begin{equation}
\label{u_p_lower}
	\| u_p \|_{L^{\infty}(\Omega)}^{p+1-N} \ge \la_1(\Omega).
\end{equation}

To obtain a uniform upper bound of $\Vert u_p \Vert_{L^{\infty}(\Omega)}$, 
we use an argument with the coarea formula and the Finsler isoperimetric inequality in $\re^N$.
Set
\begin{align*}
	&\gamma_p = \max_{x \in \Omega} u_p(x), \\
	&\Omega_t = \{ x \in \Omega: u_p(x) > t \}, \\
	&\mathcal{A} = \{ x \in \Omega: u_p(x) > \frac{\gamma_p}{2} \}.
\end{align*}
By Lemma \ref{Lemma:refined_Sobolev} with $t = \frac{Np}{N-1}$ and by Lemma \ref{Lemma:energy}, we have
\[
	\( \intO u_p^{\frac{Np}{N-1}} dx \)^{\frac{N-1}{Np}} \le D_{\frac{Np}{N-1}} \(\frac{Np}{N-1}\)^{\frac{N-1}{N}} \Vert H(\nabla u_p) \Vert_{L^N(\Omega)}
	\le M
\] where $M$ is independent of $p$ if $p$ large.
From this and Chebyshev's inequality, we have
\begin{equation}
\label{Ren-Wei(3.2)}
	\(\frac{\gamma_p}{2}\)^{\frac{Np}{N-1}} |\mathcal{A}| \le M^{\frac{Np}{N-1}}.
\end{equation}

On the other hand, by approximating the constant $1$ on $\Omega_t$ by $C_0^{\infty}$-functions, we have
\[
	-\int_{\Omega_t} \diver \( H(\nabla u_p)^{N-1} (\nabla_{\xi} H)(\nabla u_p) \)  dx = \int_{\Omega_t} u_p^p dx.
\]
Thus integration by parts leads to
\begin{align}
\label{E1}
	\int_{\Omega_t} u_p^p dx &= -\int_{\pd\Omega_t} H(\nabla u_p)^{N-1} (\nabla_{\xi} H)(\nabla u_p) \cdot \nu ds \notag \\
	&= \int_{\pd\Omega_t} \frac{H(\nabla u_p)^{N-1}(\nabla_{\xi} H)(\nabla u_p) \cdot \nabla u_p}{|\nabla u_p|} ds \\
	&= \int_{\pd\Omega_t} \frac{H(\nabla u_p)^N}{|\nabla u_p|} ds, \notag
\end{align}
since the outer unit normal $\nu$ to $\pd\Omega_t$ is $\nu= -\frac{\nabla u_p}{|\nabla u_p|}$.
Here we used Proposition \ref{Prop:identities} (3) in the last equality. 
Coarea formula implies 
\[
	|\Omega_t| = \int_{\Omega_t} 1 dx = \int_t^{\infty} \int_{\{ u_p = s \}} \frac{ds}{|\nabla u_p|}.
\]
Thus
\begin{equation}
\label{E2}
	-\frac{d}{dt} |\Omega_t| = \int_{\partial\Omega_t} \frac{ds}{|\nabla u_p|}.
\end{equation}
By \eqref{E1}, \eqref{E2}, and the Schwartz inequality, we have 
\begin{align}
\label{E3}
	\( -\frac{d}{dt} |\Omega_t| \)^{N-1} \int_{\Omega_t} u_p^p dx &=  \(\int_{\pd\Omega_t} \frac{1}{|\nabla u_p|} ds \)^{N-1} \( \int_{\pd\Omega_t} \frac{H^N(\nabla u_p)}{|\nabla u_p|} ds \) \notag \\
	&\ge \( \int_{\pd\Omega_t} \frac{H(\nabla u_p)}{|\nabla u_p|} ds \)^N  \\
%	&= \( \int_{\pd\Omega_t} H\(\frac{-\nabla u_p}{|\nabla u_p|}\) ds \)^N  \notag \\
	&= \( \int_{\pd\Omega_t} H( \nu ) ds \)^N  \notag \\
	&= P_H(\Omega_t, \re^N)^N \ge N^N \kappa_N |\Omega_t|^{N-1}. \notag
\end{align}
In the last inequality of \eqref{E3}, we used the Finsler isoperimetric inequality in $\re^N$ \cite{AFTL}, \cite{Taylor}, \cite{Fonseca-Muller}:
\begin{equation}
\label{Finsler-isoperimetric}
	P_H(E, \re^N) \ge N \kappa_N^{\frac{1}{N}} |E|^{\frac{N-1}{N}} 
\end{equation}
for any set of finite perimeter $E \subset \re^N$ with respect to $H$.

Now, define $r(t) > 0$ such that 
\[
	|\Omega_t| = \kappa_N r^N(t).
\]
Then
\[
	\frac{d}{dt}|\Omega_t| = N \kappa_N r^{N-1}(t) r'(t).
\]
Note that $r'(t) <0$.
Putting this in \eqref{E3}, we have
\begin{align*}
	&\( -N \kappa_N r^{N-1}(t) \frac{dr}{dt}(t)  \)^{N-1} \int_{\Omega_t} u_p^p dx \ge N^N \kappa_N |\Omega_t|^{N-1}, \\
	&\( -\frac{dr}{dt} \)^{N-1} \int_{\Omega_t} u_p^p dx \ge (N\kappa_N) r^{N-1}, \\
	&-\frac{dt}{dr} \le \( \int_{\Omega_t} u_p^p dx \)^{\frac{1}{N-1}} (N\kappa_N)^{-\frac{1}{N-1}} r^{-1} \\
	&\le C r^{-1} \gamma_p^{\frac{p}{N-1}} |\Omega_t|^{\frac{1}{N-1}} = C \gamma_p^{\frac{p}{N-1}} r^{\frac{1}{N-1}},
\end{align*}
%where $C = (N\kappa_N)^{-\frac{1}{N-1}}$ and $C' = N^{-\frac{1}{N-1}}$.
where $C$ is a constant dependent only on $N$ and varies from line to line.
Integrating the last inequality from $r=0$ to $r=r_0$, we have
\[
	t(0) - t(r_0) \le C \gamma_p^{\frac{p}{N-1}} r_0^{\frac{N}{N-1}}.
\]
Choose $r_0$ such that $t(r_0) = \frac{\gamma_p}{2}$.
Then the above inequality implies
\begin{align*}
	\gamma_p \le C \gamma_p^{\frac{p}{N-1}} r_0^{\frac{N}{N-1}}, \quad \text{i.e.,} \quad \gamma_p \le C \gamma_p^{\frac{p}{N-1}} |\mathcal{A}|^{\frac{1}{N-1}}.
\end{align*}
Combining this with \eqref{Ren-Wei(3.2)}, we have
\begin{align*}
	&\gamma_p \le C \gamma_p^{\frac{p}{N-1}} \( \frac{M^{\frac{Np}{N-1}}}{\(\frac{\gamma_p}{2}\)^{\frac{Np}{N-1}}} \)^{\frac{1}{N-1}} = C \gamma_p^{-\frac{p}{(N-1)^2}} M^{\frac{Np}{(N-1)^2}}, \\
	&\gamma_p^{1 + \frac{p}{(N-1)^2}} \le C M^{\frac{Np}{(N-1)^2}}, \\
	&\gamma_p \le C^{\frac{(N-1)^2}{(N-1)^2 + p}} M^{\frac{Np}{(N-1)^2 + p}}.
\end{align*}
From this, we conclude that there exists $C>0$ (independent of $p$) such that $\gamma_p \le C$ for $p$ large.

The latter half part of Theorem \ref{Theorem:bound} is already proven in Lemma \ref{Lemma:energy}.
Thus we have completed the proof of Theorem \ref{Theorem:bound}.
\qed

From Theorem \ref{Theorem:bound}, we have the following consequence.

\begin{corollary}
\label{Corollary:bound}
There exist $C, C' >0$ independent of $p$ large such that
\[
	C \le p^{N-1} \intO u_p^p dx \le C'
\]
holds true.
\end{corollary}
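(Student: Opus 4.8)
The plan is to read off both inequalities from Theorem~\ref{Theorem:bound}, which I may assume. That theorem provides, for $p$ large, the uniform bound $\|u_p\|_{L^\infty(\Omega)}\le C_2$ together with the asymptotics $p^{N-1}\intO u_p^{p+1}\,dx \to \left(\frac{Ne\beta_N}{N-1}\right)^{N-1}\in(0,\infty)$; in particular there are constants $0<c\le c'<\infty$ with $c\le p^{N-1}\intO u_p^{p+1}\,dx\le c'$ for all large $p$. The whole task is then to transfer these estimates from the exponent $p+1$ to the exponent $p$.

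For the lower bound I would use the $L^{\infty}$-bound directly. Since $0\le u_p\le\|u_p\|_{L^\infty(\Omega)}\le C_2$ on $\Omega$ for $p$ large, one has the pointwise inequality $u_p^{p}\ge C_2^{-1}u_p^{p+1}$, and integrating gives
\[
p^{N-1}\intO u_p^{p}\,dx \ \ge\ \frac{1}{C_2}\,p^{N-1}\intO u_p^{p+1}\,dx \ \ge\ \frac{c}{C_2}\ >\ 0 .
\]

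For the upper bound a pointwise comparison does not suffice, because on $\{u_p<1\}$ one only gets $u_p^{p}\le u_p^{p+1}+1$ and the resulting term $p^{N-1}|\Omega|$ diverges. Instead I would apply H\"older's inequality with exponents $\tfrac{p+1}{p}$ and $p+1$,
\[
\intO u_p^{p}\,dx \ \le\ |\Omega|^{\frac{1}{p+1}}\Big(\intO u_p^{p+1}\,dx\Big)^{\frac{p}{p+1}},
\]
then multiply by $p^{N-1}=p^{\frac{N-1}{p+1}}\big(p^{N-1}\big)^{\frac{p}{p+1}}$ to obtain
\[
p^{N-1}\intO u_p^{p}\,dx \ \le\ |\Omega|^{\frac{1}{p+1}}\,p^{\frac{N-1}{p+1}}\Big(p^{N-1}\intO u_p^{p+1}\,dx\Big)^{\frac{p}{p+1}} .
\]
Since $|\Omega|^{1/(p+1)}\to 1$, $p^{(N-1)/(p+1)}\to 1$, and the last factor stays bounded (indeed converges) by Theorem~\ref{Theorem:bound}, the right-hand side is bounded above by some $C'$ for $p$ large. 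This completes the argument; there is no serious obstacle here, the only point worth noting being that H\"older's inequality, rather than a crude pointwise estimate, is what makes the upper bound work.
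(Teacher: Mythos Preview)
Your argument is correct and essentially identical to the paper's proof: the lower bound comes from $u_p^{p+1}\le C_2\,u_p^{p}$ together with Lemma~\ref{Lemma:energy}, and the upper bound from H\"older's inequality with exponents $\tfrac{p+1}{p}$ and $p+1$. The only discrepancy is cosmetic---the paper writes the auxiliary factor as $p^{1/(p+1)}$ whereas your (correct) bookkeeping gives $p^{(N-1)/(p+1)}$, but both tend to $1$ and the conclusion is unaffected.
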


\begin{proof}
By Theorem \ref{Theorem:bound}, we have
\[
	\frac{1}{C_2} p^{N-1} \intO u_p^{p+1} dx \le \frac{\Vert u_p \Vert_{L^{\infty}(\Omega)}}{C_2} p^{N-1} \intO u_p^p dx \le p^{N-1} \intO u_p^p dx
\]
where $C_2$ is as in Theorem \ref{Theorem:bound}.
The left-hand side of the above inequality is bounded from below by a positive constant by Lemma \ref{Lemma:energy}.
On the other hand, H\"older's inequality implies
\[
	p^{N-1} \intO u_p^p dx \le \( p^{N-1} \intO u_p^{p+1} dx \)^{\frac{p}{p+1}} p^{\frac{1}{p+1}}|\Omega|^{\frac{1}{p+1}}
\]
and the right-hand side of the above inequality is bounded from above by Lemma \ref{Lemma:energy}.
This proves the conclusion.
\end{proof}

\section{Proof of Theorem \ref{Theorem:limsup}}
\label{section:limsup_bound}

In this section, we prove Theorem \ref{Theorem:limsup}.
Since $\limsup_{p \to \infty} \| u_p \|_{L^{\infty}(\Omega)} \ge 1$ immediately follows from \eqref{u_p_lower} (this is true for any solution sequence, not necessary least energy solutions),
we just need to prove $\limsup_{p \to \infty} \| u_p \|_{L^{\infty}(\Omega)} \le e^{\frac{N-1}{N}}$.
For this purpose, we follow the argument by Adimurthi and Grossi \cite{Adimurthi-Grossi}.

Let $x_p \in \Omega$ be a point so that the least energy solution to \eqref{E_N} takes its maximum: $u_p(x_p) = \| u_p \|_{L^{\infty}(\Omega)}$.
As in \cite{Adimurthi-Grossi}, We make a change of variable
\begin{equation}
	\label{z_p}
	z_p(x) = \frac{p}{u_p(x_p)} \( u_p(\eps_p x + x_p) - u_p(x_p) \), \quad x \in \Omega_p = \frac{\Omega - x_p}{\eps_p},
\end{equation}
where $\eps_p > 0$ is defined so that 
\begin{equation}
\label{ep}
	\eps_p^N p^{N-1} u_p(x_p)^{p+1-N} \equiv 1.
\end{equation}
By Theorem \ref{Theorem:bound}, we see $\eps_p \to 0$ as $p \to \infty$.
Since $u_p$ is a weak solution to \eqref{E_N}, $z_p$ is a weak solution to
\begin{align}
\label{z_p_eq}
	\begin{cases}
	&-Q_N z_p = \( 1 + \frac{z_p}{p} \)^{p} \quad \mbox{in} \; \Omega_p, \\
	&z_p |_{\pd\Omega_p} = -p, \\ 
	&\max_{x \in \ol{\Omega}_n} z_n(x) = z_n(0) = 0, \\
	&-p < z_p \le 0 \quad  \mbox{in} \; \Omega_p \\
	\end{cases}
\end{align}
by Lemma \ref{Lemma:weak}.
We want to pass to the limit as $p \to \infty$ in \eqref{z_p_eq}.
For this purpose, take any ball $B_R(0) \subset \Omega_p$ centered at the origin and radius $R$.
Consider
\begin{align*}
	\begin{cases}
	&-Q_N w_p = \( 1 + \frac{z_p}{p} \)^p \quad \mbox{in} \; B_R(0), \\
	&w_p |_{\pd B_R(0)} = 0.
	\end{cases}
\end{align*}
Comparison principle for $-Q_N$ (see for example, \cite{Xie-Gong} Theorem 3.1) and Serrin's elliptic estimate Theorem \ref{Theorem:Serrin} yield that $0 \le w_p \le C$ on $B_R(0)$ where $C$ is a constant
independent of $p$.
Set $\psi_p(x) = w_p(x) - z_p(x), x \in B_R(0)$. 
Then $\psi_p$ is a nonnegative in $B_R(0)$ and $\psi_p(0) = w_p(0) - z_p(0) = w_p(0) \le C$ uniformly in $p$.
Moreover, we have
\[
	0 = -(Q_N w_p - Q_N z_p) = -\tilde{Q}_N (w_p - z_p) = -\tilde{Q}_N \psi_p
\]
where
\begin{align*}
	&\tilde{Q}_N (w_p - z_p) \\
	&= \sum_{i,j=1}^N \frac{\pd}{\pd x_i} \left[ \int_0^1 \frac{1}{N} \frac{\pd^2 H^N}{\pd \xi_i \pd \xi_j} (t \nabla w_p + (1-t) \nabla z_p) dt \frac{\pd}{\pd x_j} (w_p(x) - z_p(x)) \right].
\end{align*}
Thanks to the assumption that ${\rm Hess} H^N(\xi)$ is positive definite, $\tilde{Q}_N$ is a quasilinear elliptic differential operator.
Thus we can apply Serrin's Harnack inequality (Theorem \ref{Theorem:Serrin} (3)) to $\psi_p$, which implies that there exists $C = C(R,r) > 0$ 
for any $0 < r < R$ such that
\[
	\sup_{B_r(0)} \psi_p(x) \le C \(1 + \inf_{x \in B_r(0)} \psi_p(x) \) \le C (1 + \psi_p(0)) =  C (1 + w_p(0)) \le C.
\]
Thus we have
\[
	0 \ge z_p(x) = w_p(x) - \psi_p(x) \ge -C
\]
for $x \in B_r(0)$. Since $0 < r < R$ is arbitrary, we have $\{ |z_p| \} \subset L^{\infty}_{loc}(B_R(0)))$ is uniformly bounded in $p$. 
Again Serrin's regularity estimate implies that $\{ z_p \}$ is bounded in $C^{1,\alpha}_{loc}(B_R(0)))$ for any $R > 0$ uniformly in $p$.

Now, we consider two cases: 

{\bf Case (i)}: $\frac{{\rm dist}(x_p, \pd\Omega_p)}{\eps_p} \to +\infty$

{\bf Case (ii)}: $\frac{{\rm dist}(x_p, \pd\Omega_p)}{\eps_p}$ is bounded and 
\[
	\Omega_p \to \re^N_{+}(s_0) = \{ x = (x', x_N) \in \re^N \, : \, x_N > s_0 \} \quad (p \to \infty)
\]
for some $s_0$.

In the case (i), note that $\Omega_p \to \re^N$ as $p \to \infty$.
Hence by the Ascoli-Arzel\'a theorem, we know that (up to a subsequence), $\{ z_p \}$ converges to some function $z \in C^1(\re^N)$ and
$z$ satisfies
\[
	-Q_N z = e^z \quad \text{in} \, \re^N.
\]

Now we claim that $\int_{\re^N} e^z dx < +\infty$.
In fact, since $z_p \to z$ in $C^1_{loc}(\re^N)$, we obtain
\[
	1_{\Omega_p}(x) \( 1 + \frac{z_p(x)}{p} \)^p \to e^{z(x)}
\]
pointwisely for $x \in \re^N$, where $1_{\Omega_p}$ is the characteristic function of $\Omega_p$. 
By using Fatou's lemma and H\"older's inequality, we deduce
\begin{align*}
	\int_{\re^N} e^z dx 
	&\le \liminf_{p \to \infty} \int_{\Omega_p} \( 1 + \frac{z_p(x)}{p} \)^p dx \\
	&\le \lim_{p \to \infty} \frac{p^{N-1}}{(u_p(x_p))^{N-1}} \intO (u_p(y))^p dy \\
	&\le \lim_{p \to \infty} \frac{p^{N-1}}{(u_p(x_p))^{N-1}} \( \intO (u_p(y))^{p+1} dy \)^{p/(p+1)} |\Omega|^{1/(p+1)} \\
	&\le C < \infty
\end{align*}
where we have used the facts that $\intO u_p^{p+1} dy = \frac{O(1)}{p^{N-1}}$ by Lemma \ref{Lemma:energy} and $u_p(x_p) \ge C_1 >0$ by Theorem \ref{Theorem:bound}.
Hence, we check that the limit function satisfies
\begin{equation}
\label{case1}
	\begin{cases}
	&-Q_N z = e^z \quad \text{in} \, \re^N, \\ 
	&z \le 0, \quad \text{in} \, \re^N, \\ 
	&\int_{\re^N} e^z dx < \infty.
	\end{cases}
\end{equation}

In the case (ii), almost the same proof works, and we see that the limit function $z$ is a solution of
\begin{equation}
\label{case2}
	\begin{cases}
	&-Q_N z = e^z \quad \text{in} \, \re^N_{+}(s_0), \\ 
	&z \le 0, \quad \text{in} \, \re^N_{+}(s_0), \\ 
	&z = -\infty, \quad \text{on} \, \pd \re^N_{+}(s_0), \\ 
	&\int_{\re^N_{+}(s_0)} e^z dx < \infty.
	\end{cases}
\end{equation}

Now we prove the following lemma.
The case $N = 2$ was treated by Ding (see \cite{Chen-Li}) when $H(\xi) = |\xi|$, and by Wang and Xia \cite{Wang-Xia} for general $H(\xi)$.

\begin{lemma}
\label{Lemma:Ding}
If $z$ is a $C^1$ weak solution of \eqref{case1}, then we have
\[
	\int_{\re^N} e^z dx \ge \(\frac{N}{N-1}\)^{N-1} N^N \kappa_N.
\]
If $z$ is a $C^1$ weak solution of \eqref{case2}, then we have
\[
	\int_{\re^N_{+}(s_0)} e^z dx \ge \(\frac{N}{N-1}\)^{N-1} N^N \kappa_N.
\]
\end{lemma}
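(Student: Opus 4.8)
The plan is to run a rearrangement argument on the distribution function of $z$, exactly along the lines of the coarea/isoperimetric computation in the proof of Theorem~\ref{Theorem:bound}, and then --- this being the decisive move, going back to Ding when $N=2$ --- to change the independent variable to the measure of the super-level set in order to pin down the \emph{sharp} constant.

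Write $\beta=\int_{\re^{N}}e^{z}\,dx$ and, for $t\in\re$, $\mu(t)=|\{z>t\}|$, $m(t)=\int_{\{z>t\}}e^{z}\,dx$. Since $z\le 0$, both functions are finite and non-increasing, with $m(-\infty)=\beta$ and $\mu(-\infty)=+\infty$. For a.e.\ $t$, integrating $-Q_N z=e^{z}$ over $\{z>t\}$ as in the proof of Theorem~\ref{Theorem:bound} (the outer unit normal to $\{z>t\}$ being $-\nabla z/|\nabla z|$) and using $(\nabla_\xi H)(\xi)\cdot\xi=H(\xi)$ from Proposition~\ref{Prop:identities}(2), I get
\[
	m(t)=\int_{\{z=t\}}\frac{H(\nabla z)^{N}}{|\nabla z|}\,d\mathcal H^{N-1}.
\]
Hölder's inequality on $\{z=t\}$ with exponents $N$ and $N/(N-1)$ gives
\[
	\Big(\int_{\{z=t\}}\tfrac{H(\nabla z)}{|\nabla z|}\,d\mathcal H^{N-1}\Big)^{N}\le m(t)\,\Big(\int_{\{z=t\}}\tfrac{d\mathcal H^{N-1}}{|\nabla z|}\Big)^{N-1},
\]
and since $H$ is even and positively $1$-homogeneous, the left side equals $P_H(\{z>t\},\re^{N})^{N}\ge N^{N}\kappa_N\,\mu(t)^{N-1}$ by \eqref{Finsler-isoperimetric}; the coarea formula meanwhile gives $\int_{\{z=t\}}|\nabla z|^{-1}\,d\mathcal H^{N-1}=-\mu'(t)$ and $-m'(t)=e^{t}(-\mu'(t))$. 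Hence, for a.e.\ $t$,
\[
	N^{N}\kappa_N\,\mu(t)^{N-1}\le m(t)\,(-\mu'(t))^{N-1},\qquad -m'(t)=e^{t}(-\mu'(t)).
\]

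I would then take $\mu\in(0,\infty)$ as independent variable (a nonconstant solution of $-Q_N z=e^{z}$ has no flat level sets, so $\mu(\cdot)$ is strictly decreasing) and write $M(\mu)$ for $m$ as a function of $\mu$. From the second relation, $M'(\mu)=e^{t(\mu)}>0$, whence $M''(\mu)=M'(\mu)\,\tfrac{dt}{d\mu}\le 0$ and $-\mu'(t)=M'(\mu)/(-M''(\mu))$; feeding this into the first relation and simplifying yields
\[
	\mu\,M''(\mu)+k\,M(\mu)^{\frac{1}{N-1}}M'(\mu)\ge 0,\qquad k:=\big(N^{\frac{N}{N-1}}\kappa_N^{\frac{1}{N-1}}\big)^{-1}.
\]
Rewriting $\mu M''=\tfrac{d}{d\mu}(\mu M')-M'$ and $k\,M^{\frac{1}{N-1}}M'=\tfrac{(N-1)k}{N}\tfrac{d}{d\mu}\big(M^{\frac{N}{N-1}}\big)$ and integrating from $0$ to $\mu$ (the endpoint terms at $0$ vanish, since $M(0^{+})=0$ and $\mu M'(\mu)\to 0$), I obtain
\[
	\mu\,M'(\mu)\ge M(\mu)-\tfrac{(N-1)k}{N}\,M(\mu)^{\frac{N}{N-1}}.
\]
Because $\int_0^{\infty}M'(\mu)\,d\mu=\beta<\infty$, there is a sequence $\mu_n\to\infty$ with $\mu_nM'(\mu_n)\to 0$; passing to the limit and using $M(\mu_n)\to\beta$ gives $0\ge\beta-\tfrac{(N-1)k}{N}\beta^{\frac{N}{N-1}}$, that is, $\beta^{\frac{1}{N-1}}\ge\tfrac{N}{(N-1)k}=\tfrac{N}{N-1}\,N^{\frac{N}{N-1}}\kappa_N^{\frac{1}{N-1}}$, which is exactly $\int_{\re^{N}}e^{z}\,dx\ge\big(\tfrac{N}{N-1}\big)^{N-1}N^{N}\kappa_N$. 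For a solution of \eqref{case2}, the argument is word for word the same: because $z=-\infty$ on $\pd\re^{N}_{+}(s_0)$, each $\{z>t\}$ remains at positive distance from $\pd\re^{N}_{+}(s_0)$, so its level boundary $\{z=t\}$ is interior and $\{z>t\}$ is a finite-perimeter subset of $\re^{N}$, to which \eqref{Finsler-isoperimetric} still applies.

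The principal difficulty I anticipate is getting the \emph{sharp} constant rather than merely a positive lower bound: replacing $m(t)$ by $\beta$ already in the first relation (together with the crude bound $m(t)\le\mu(t)$ coming from $z\le 0$) only yields $\beta\ge\big(\tfrac{N}{N-1}\big)^{N-1}N\kappa_N$, smaller by the factor $N^{N-1}$, so passing to the $\mu$-variable and using the integrated identity above is what actually does the work. The remaining issues --- the low ($C^{1}$, not $C^{2}$) regularity of $z$, the a.e.\ differentiability and monotonicity needed to change variables, and the justification of the coarea identities --- are routine and are handled exactly as in the proof of Theorem~\ref{Theorem:bound}.
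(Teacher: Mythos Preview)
Your argument is correct and rests on the same level-set/isoperimetric skeleton as the paper's proof, but you close it by a different route. After the common step
\[
N^{N}\kappa_N\,\mu(t)^{N-1}\le m(t)\,(-\mu'(t))^{N-1},\qquad -m'(t)=e^{t}(-\mu'(t)),
\]
the paper does \emph{not} pass to the $\mu$-variable. It uses the layer-cake identity $\int_{\re^N} e^{z}\,dx=\int_{-\infty}^{\max z}e^{t}\mu(t)\,dt$, bounds $\mu(t)$ from above via the displayed inequality, and then recognises $e^{t}m^{1/(N-1)}(-\mu')=m^{1/(N-1)}(-m')=-\tfrac{N-1}{N}\tfrac{d}{dt}\bigl(m^{N/(N-1)}\bigr)$ as an exact derivative; a single integration in $t$ gives $\beta\le\tfrac{N-1}{N}(N^{N}\kappa_N)^{-1/(N-1)}\beta^{N/(N-1)}$ directly. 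Your route---the change of variable to $M(\mu)$, the second-order inequality $\mu M''+kM^{1/(N-1)}M'\ge 0$, one integration, and the $\liminf$ argument at $\mu\to\infty$---reaches exactly the same integrated relation: your inequality $\mu M'(\mu)\ge M-\tfrac{(N-1)k}{N}M^{N/(N-1)}$ evaluated at $\mu=\mu(t_0)$ is precisely what the paper's computation yields if the layer-cake integral is taken over $(t_0,\max z)$ rather than over all of $(-\infty,\max z)$. So the two proofs are equivalent in content; the paper's is shorter because it avoids the change of variable, the second-order ODE, and the subsequence selection at $\mu=\infty$, while yours makes the differential-inequality structure explicit (and is slightly more robust if one later wants information at intermediate levels, not only the total mass). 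The half-space case is handled identically in both.
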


\begin{proof}
As in the proof of Theorem \ref{Theorem:bound}, we use a level set argument.
First, we assume $z$ is a solution of \eqref{case1}.
Put
\[
	\Omega_t = \{ x \in \re^N \ : \ z(x) > t \}, \quad \mu(t) = |\Omega_t|.
\]
Integration by parts on $\Omega_t$ leads to
\begin{align*}
	\int_{\Omega_t} e^z  dx = -\int_{\Omega_t} Q_N z dx &= \int_{\pd\Omega_t} H^{N-1}(\nabla z) (\nabla_{\xi} H)(\nabla z) \cdot \frac{\nabla z}{|\nabla z|} ds_x \\
	&= \int_{\pd\Omega_t} \frac{H^N(\nabla z)}{|\nabla z|} ds_x.
\end{align*}
By the Finsler isoperimetric inequality \eqref{Finsler-isoperimetric} and H\"older's inequality, we see
\begin{align*}
	N \kappa_N^{1/N} |\Omega_t|^{\frac{N-1}{N}} \le P_H(\Omega_t, \re^N) &= \int_{\pd\Omega_t} \frac{H(\nabla z)}{|\nabla z|} ds_x \\
	&\le \(\int_{\pd\Omega_t} \frac{H^N(\nabla z)}{|\nabla z|} ds_x \)^{\frac{1}{N}} \(\int_{\pd\Omega_t} \frac{ds_x}{|\nabla z|} \)^{\frac{N-1}{N}} \\
	&= \( \int_{\Omega_t} e^z dx \)^{\frac{1}{N}} \( -\mu'(t) \)^{\frac{N-1}{N}},
\end{align*}
here we have used coarea formula
\[
	\mu(t) = \int_t^{\infty} \int_{\{ x : z(x) = s \}} \frac{ds_x}{|\nabla z|} ds.
\]
Thus we have
\[
	\mu(t) \le \left\{ \frac{1}{N \kappa_N^{1/N}} \( \int_{\Omega_t} e^z dx \)^{\frac{1}{N}} \( -\mu'(t) \)^{\frac{N-1}{N}} \right\}^{\frac{N}{N-1}}.
\]
Therefore, we obtain
\begin{align*}
	\int_{\re^N} e^z dx &= \int_{-\infty}^{\max z} e^t \mu(t) dt \\
	&\le \( \frac{1}{N \kappa_N^{1/N}} \)^{\frac{N}{N-1}} \int_{-\infty}^{\max z} e^t  \( \int_{\Omega_t} e^z dx \)^{\frac{1}{N-1}} ( -\mu'(t) ) dt \\
	&= \( \frac{1}{N \kappa_N^{1/N}} \)^{\frac{N}{N-1}} \( \frac{N-1}{N} \)  \int_{-\infty}^{\max z} \frac{d}{dt} \( \int_{\Omega_t} e^z dx \)^{\frac{N}{N-1}} dt \\
	&= \( \frac{1}{N \kappa_N^{1/N}} \)^{\frac{N}{N-1}} \( \frac{N-1}{N} \) \( \int_{\re^N} e^z dx \)^{\frac{N}{N-1}},
\end{align*}
which implies
\[
	\(\frac{N}{N-1}\)^{N-1} N^N \kappa_N \le \int_{\re^N} e^z dx.
\]

The proof when $z$ is a solution to \eqref{case2} is similar, 
since the boundary condition $z = -\infty$ on $\pd\re^N_{+}(s_0)$ assures that all level sets of $z$ are confined in $\re^N_{+}(s_0)$.
\end{proof}

By the change of variables, we have
\begin{equation}
\label{change}
	p^{N-1} \intO u_p^{p+1}(y) dy = u_p^N(x_p) \int_{\Omega_p} \(1 + \frac{z_p(x)}{p} \)^{p+1} dx.
\end{equation}
Let us take $\limsup_{p \to \infty}$ of both sides of \eqref{change}.
Then we see
\[
	\limsup_{n \to \infty} \text{LHS of } \eqref{change} = \(\frac{N e}{N-1}\)^{N-1} N^N \kappa_N
\]
by Lemma \ref{Lemma:energy}.
On the other hand, Fatou's lemma and Lemma \ref{Lemma:Ding} implies
\begin{align*}
	\limsup_{p \to \infty} \text{RHS of } \eqref{change} &\ge (\limsup_{p \to \infty} u_p(x_p))^N \times 
	\begin{cases}
	\int_{\re^N} e^z dx \quad &\text{when case (i)} \\
	\int_{\re^N_{+}(s_0)} e^z dx \quad &\text{when case (ii)}
	\end{cases} \\
	&\ge (\limsup_{p \to \infty} u_p(x_p))^N \(\frac{N}{N-1}\)^{N-1} N^N \kappa_N.
\end{align*}
Hence, we have 
\[
	e^{N-1} \ge (\limsup_{p \to \infty} \| u_p \|_{L^{\infty}(\Omega)})^N.
\]
which implies Theorem \ref{Theorem:limsup}
\qed

\section{Proof of Theorem \ref{Theorem:blowup}}
\label{section:blowup}

In this section, we prove Theorem \ref{Theorem:blowup}.
Given any sequence $p_n$ of $p$ with $p_n \to \infty$, 
let us recall \eqref{v_p} and \eqref{L_0L_1} for $p = p_n$, $u_n = u_{p_n}$.
\begin{align*}
	&v_n = \frac{u_n}{\la_n} = \frac{u_n}{(\intO u_n^{p_n} dx)^{\frac{1}{N-1}}}, \quad \la_n = \( \intO u_n^{p_n} dx \)^{\frac{1}{N-1}}, \\ 
	&f_n(x) = \frac{u_n^{p_n}}{\intO u_n^{p_n} dx}, \\
	&L_0 = \limsup_{n \to \infty} \frac{p_n \(\intO u_n^{p_n} dx \)^{\frac{1}{N-1}}}{\( \frac{N}{N-1} e^{\frac{N-1}{N}} \)}, \quad L_1 = d_N^{-\(\frac{1}{N-1}\)} L_0.
\end{align*}
Then $v_n$ is a weak solution of \eqref{v_p_eq} for $p = p_n$.
By H\"older's inequality and Theorem \ref{Theorem:bound}, we see 
\[
	p_n^{N-1} \intO u_n^{p_n} dx \le p_n^{N-1} \(\intO u_n^{p_n+1} dx \)^{\frac{p_n}{p_n+1}} |\Omega|^{\frac{1}{p_n+1}} \to \( \frac{Ne}{N-1} \beta_N \)^{N-1}
\] 
as $n \to \infty$.
This shows that
\[
	L_0 \le e^{\frac{1}{N}} \beta_N, \quad L_1 \le e^{\frac{1}{N}} \beta_N d_N^{-(\frac{1}{N-1})}.
\]

First, we prove $S \ne \void$ for any sequence $v_n = v_{p_n}$ of $v_p$ with $p_n \to \infty$. 
Indeed, by Theorem \ref{Theorem:bound}, we have $\| u_n \|_{L^{\infty}(\Omega)} \ge C_1 > 0$ for any $n \in \N$.
Let $x_n \in \Omega$ be a point such that $u_n(x_n) = \| u_n \|_{L^{\infty}(\Omega)}$, then
\[
	v_n(x_n) = \frac{u_n(x_n)}{(\intO u_n^{p_n} dx)^{\frac{1}{N-1}}} \ge \frac{C_1}{(\intO u_n^{p_n} dx)^{\frac{1}{N-1}}} = \frac{C_1}{O(\frac{1}{p_n})} \to +\infty
\]
by Lemma \ref{Lemma:energy}.
This implies that any accumulation point of $\{ x_n \}$ is contained in $S$ and hence $S \ne \void$.

Next, as in \cite{Brezis-Merle}, \cite{Ren-Wei(TAMS)}, \cite{Ren-Wei(PAMS)}, we define {\it $(L, \delta)$-regular set} and {\it $(L, \delta)$-irregular set} of a sequence $\{ u_n \}$.
Since
\[
	f_n = \frac{u_n^{p_n}}{\intO u_n^{p_n} dx} \in L^1(\Omega), \quad f_n \ge 0, \quad \intO f_n dx = 1,
\]
there exists a subsequence (still denoted by $n$) such that
\[
	f_n \stackrel{*}{\weakto} \mu, \quad \mu(\Omega) \le 1
\]
in the sense of Radon measures of $\Omega$, where $\mu$ is a nonnegative Radon measure.

Given $L > 0$ and $\delta >0$, we call a point $x_0 \in \Omega$ a {\it $(L, \delta)$-regular point of} $\{ u_n \}$ if
there exists $\varphi \in C_0(\Omega), 0\le \varphi \le 1$ with $\varphi \equiv 1$ near $x_0$ such that
\[
	\intO \varphi d\mu < \( \frac{\beta_N}{L+3\delta} \)^{N-1}
\]
where $\beta_N = N(N\kappa_N)^{\frac{1}{N-1}}$ is as in Theorem \ref{Theorem:TM}.
We put 
\begin{align*}
	&R_L(\delta) = \{ x_0 \in \Omega: x_0 \; \mbox{is a} \; (L, \delta)\text{-regular point} \}, \\
	&\Sigma_L(\delta) = \Omega \setminus R_L(\delta).
\end{align*}
We call a point in $\Sigma_L(\delta)$ an {\it $(L, \delta)$-irregular point} of the sequence $\{ u_n \}$.
Note that $(L, \delta)$-regular, or $(L, \delta)$-irregular points are automatically interior points of $\Omega$.
Also note that if $x_0 \in \Sigma_L(\delta)$, then we have
\begin{equation}
\label{atom}
	\mu(\{ x_0 \}) \ge \( \frac{\beta_N}{L+3\delta} \)^{N-1}.
\end{equation}
Since 
\[
	1 \ge \mu(\Omega) \ge \( \frac{\beta_N}{L+3\delta} \)^{N-1} \sharp \Sigma_L(\delta)
\]
by \eqref{atom}, we see that $\Sigma_L(\delta)$ is a finite set for any $L > 0$ and $\delta > 0$.

Next Lemma is the key to analyze the interior blow-up set $S \cap \Omega$. 

\begin{lemma}{\rm (smallness of $\mu$ implies boundedness)}
\label{Lemma:small}
Let $x_0$ be a $(L_1, \delta)$-regular point of a sequence $\{ u_n \}$ where $L_1$ is defined in \eqref{L_0L_1}.
Then $\{ v_n \}$ is bounded in $L^{\infty}(B_{R_0}(x_0))$ for some $R_0>0$.
\end{lemma}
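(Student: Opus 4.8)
The plan is to localize the equation near $x_0$ and run a Brezis--Merle / Kato-type argument adapted to the Finsler $N$-Laplacian. Since $x_0$ is an $(L_1,\delta)$-regular point, by definition there is $\varphi\in C_0(\Omega)$, $0\le\varphi\le1$, $\varphi\equiv1$ on a ball $B_{2R_0}(x_0)\subset\subset\Omega$, with $\intO\varphi\,d\mu<(\beta_N/(L_1+3\delta))^{N-1}$. Because $f_n\stackrel{*}{\weakto}\mu$, for $n$ large we get $\int_{B_{2R_0}(x_0)}f_n\,dx\le\intO\varphi f_n\,dx<(\beta_N/(L_1+2\delta))^{N-1}$; call this quantity $\tau^{N-1}$ with $\tau<\beta_N/(L_1+2\delta)$. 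Split $v_n=v_n^{(1)}+v_n^{(2)}$ on $B_{2R_0}(x_0)$, where I would like $v_n^{(1)}$ to carry the singular part: but since $Q_N$ is quasilinear, linear superposition fails, so instead I split the \emph{source}: let $w_n$ solve $-Q_N w_n=f_n\mathbf 1_{B_{2R_0}(x_0)}$ in $B_{2R_0}(x_0)$ with $w_n=0$ on $\pd B_{2R_0}(x_0)$, and let $h_n=v_n-w_n$, which satisfies $-\tilde Q_N h_n=0$ in $B_{2R_0}(x_0)$ in the sense of the frozen-coefficient operator (as in \S\ref{section:limsup_bound}), hence is uniformly bounded on $B_{R_0}(x_0)$ by the comparison principle (boundary data $v_n\ge0$ on $\pd B_{2R_0}$, bounded above by Serrin's estimate away from the origin, together with Harnack). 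So the whole problem reduces to bounding $w_n$.

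For $w_n$, apply the Finsler Brezis--Merle inequality Theorem \ref{Theorem:BM}(1) with $\Omega$ replaced by $B_{2R_0}(x_0)$ and $f=f_n\mathbf 1_{B_{2R_0}(x_0)}$, whose $L^1$-norm is $<\tau^{N-1}$: for any $\eps\in(0,\beta_N)$,
\[
	\int_{B_{2R_0}(x_0)}\exp\!\Big(\frac{(\beta_N-\eps)\,|w_n|}{\tau}\Big)dx\le\frac{\beta_N}{\eps}\,|B_{2R_0}(x_0)|.
\]
Choosing $\eps$ small shows $\exp(q|w_n|)$ is bounded in $L^1(B_{2R_0}(x_0))$ for some $q>L_1+\delta$, hence $w_n$ is bounded in every $L^s$, $s<\infty$, with a bound independent of $n$. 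Now I recall that $v_n=u_n/\la_n$ with $\la_n=(\intO u_n^{p_n}dx)^{1/(N-1)}$, and by Corollary \ref{Corollary:bound}, $p_n\la_n^{N-1}=p_n\intO u_n^{p_n}dx$ is bounded above and below; also $f_n=u_n^{p_n}/\intO u_n^{p_n}dx=u_n^{p_n}/\la_n^{N-1}$, so that $u_n=\la_n v_n$ gives
\[
	f_n=\frac{\la_n^{p_n}v_n^{p_n}}{\la_n^{N-1}}=\la_n^{p_n-N+1}v_n^{p_n}.
\]
Thus $f_n^{q/p_n}=\la_n^{(q/p_n)(p_n-N+1)}v_n^{q}$; since $\la_n\to0$ like $p_n^{-1/(N-1)}$ and $p_n-N+1\sim p_n$, one has $\la_n^{(q/p_n)(p_n-N+1)}\to\la_n^{q(1+o(1))}\to0$, actually I only need it bounded. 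Hence $\|v_n\|_{L^q(B_{2R_0})}^q=\la_n^{-\,(q/p_n)(p_n-N+1)}\int f_n^{q/p_n}\,\cdot\,$ — the cleaner route is: bound $\|f_n\|_{L^{t}(B_{R_0}(x_0))}$ for some fixed $t>1$ using $f_n=\la_n^{p_n-N+1}v_n^{p_n}$ together with the uniform $L^s$-bound on $v_n=w_n+h_n$ for all $s$, picking $s=tp_n$ and tracking that $\la_n^{(p_n-N+1)t}\|v_n\|_{L^{tp_n}}^{p_n t}$ stays bounded because $tp_n\cdot\frac{N-1}{N}\le q'$ eventually would fail, so instead I keep $s$ growing and use that $\|v_n\|_{L^{s}}\le C s^{(N-1)/N}\|H(\nabla v_n)\|_{L^N}$ is not available (no gradient bound yet) — so I use the exponential-integrability bound directly: $\exp(q|w_n|)\in L^1$ and $h_n$ bounded give $\exp(q|v_n|)\in L^1(B_{2R_0})$ uniformly, hence $v_n^{p_n}e^{-(q/2)v_n}\le C(p_n)$ pointwise with $C(p_n)$ polynomial, and $f_n=\la_n^{p_n-N+1}v_n^{p_n}\le\la_n^{p_n-N+1}C(p_n)e^{(q/2)v_n}$; since $\la_n^{p_n}\to0$ superexponentially fast while $C(p_n)$ is only polynomial, $\|f_n\|_{L^{t}(B_{2R_0})}\to0$ for every fixed $t\ge1$.

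With $\|f_n\|_{L^{t}(B_{2R_0}(x_0))}$ bounded (indeed $\to0$) for some $t>1$, Serrin's local estimate Theorem \ref{Theorem:Serrin}(1) applied on $B_{R_0}(x_0)\subset\subset B_{2R_0}(x_0)$ yields
\[
	\|v_n\|_{L^{\infty}(B_{R_0}(x_0))}\le C\big(\|f_n\|_{L^{t}(B_{2R_0}(x_0))}+\|v_n\|_{L^N(B_{2R_0}(x_0))}\big)\le C,
\]
where $\|v_n\|_{L^N}$ is controlled by the uniform $L^s$-bound from the exponential integrability. This is the claimed $L^\infty_{loc}$ bound. The main obstacle — and the place where the Finsler setting differs from the classical Brezis--Merle argument — is the absence of linearity: I cannot split $v_n$ additively according to the source, so the argument has to go through the frozen-coefficient operator $\tilde Q_N$ (uniformly elliptic by \eqref{Hess}) exactly as in \S\ref{section:limsup_bound}, using Theorem \ref{Theorem:BM}(1) on the "potential-theoretic" piece $w_n$ and Harnack/comparison on the "harmonic" piece $h_n$; care is needed to ensure $\tau<\beta_N/(L_1+2\delta)$ really leaves room for an exponent $q>L_1$, which is precisely why the threshold in the definition of $(L_1,\delta)$-regular point is $(\beta_N/(L_1+3\delta))^{N-1}$ rather than $(\beta_N/L_1)^{N-1}$.
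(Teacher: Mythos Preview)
Your proof has a genuine gap at the step where you pass from exponential integrability of $v_n$ to $L^t$-integrability of $f_n$ for some $t>1$. The claim that $v_n^{p_n}e^{-(q/2)v_n}\le C(p_n)$ with $C(p_n)$ \emph{polynomial} is wrong: the maximum of $v\mapsto v^{p}e^{-qv/2}$ is $(2p/(eq))^{p}$, which grows like $p^{p}$. The product $\lambda_n^{p_n-N+1}C(p_n)$ then behaves, up to a polynomial factor in $p_n$, like $(2c/(eq))^{p_n}$ with $c\approx p_n\lambda_n$ bounded away from zero by Corollary~\ref{Corollary:bound}; this does \emph{not} tend to zero (or even stay bounded) unless $q\ge 2c/e$, a threshold that need not be met by the exponent you obtain from Brezis--Merle. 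So the assertion ``$\|f_n\|_{L^t}\to 0$ for every fixed $t\ge1$'' is unjustified, and the several false starts preceding it are symptoms of the same missing ingredient.

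That ingredient is the pointwise estimate
\[
	f_n(x)\ \le\ \exp\Bigl((L_1+\tfrac{\delta}{2})\,d_N^{1/(N-1)}\,v_n(x)\Bigr),\qquad x\in\Omega,
\]
which is the key step in the paper's proof. It is derived from Theorem~\ref{Theorem:limsup} (namely $\limsup_n\|u_n\|_\infty\le e^{(N-1)/N}$) via the elementary inequality $\frac{\log x}{x}\le\frac{\log y}{y}$ for $0<x\le y\le e$, applied with $x=u_n(\cdot)/\lambda_n^{(N-1)/p_n}$. This converts $\log f_n$ directly into a \emph{linear} function of $v_n$ with a slope matched to the exponential-integrability exponent, so that $f_n^t$ is integrable for $t=(L_1+\delta)/(L_1+\delta/2)>1$. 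Note also that the paper uses Theorem~\ref{Theorem:BM}(2), not (1), on $v_n-\phi_n$ where $\phi_n$ is genuinely $Q_N$-harmonic with $\phi_n=v_n$ on $\partial B_{R_1}(x_0)$; this is precisely where the factor $d_N^{1/(N-1)}$ in the definition of $L_1$ enters.

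A secondary issue: your bound on $h_n=v_n-w_n$ appeals to ``Serrin's estimate away from the origin'' to control the boundary data $v_n\big|_{\partial B_{2R_0}}$, but boundedness of $v_n$ near $x_0$ is exactly what you are proving, so this is circular. The paper avoids the difficulty by splitting the other way: it first gets a global bound $\|v_n\|_{L^N(\Omega)}\le C$ from Theorem~\ref{Theorem:BM}(1) applied on all of $\Omega$ (where $\|f_n\|_{L^1(\Omega)}=1$), then uses $0\le\phi_n\le v_n$ (comparison for $Q_N$) together with Serrin's interior $L^\infty$--$L^N$ estimate for the $Q_N$-harmonic function $\phi_n$ to obtain $\|\phi_n\|_{L^\infty(B_{R_1/2})}\le C\|\phi_n\|_{L^N}\le C\|v_n\|_{L^N}\le C$, with no appeal to the frozen-coefficient operator at all.
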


\begin{proof}
Key point in the proof is to get the following pointwise estimate
\begin{equation}
\label{pointwise}
	f_n(x) < \exp \( (L_1 + \delta/2) d_N^{\frac{1}{N-1}} v_n(x) \), \quad x \in \Omega.
\end{equation}

In checking (\ref{pointwise}), we use the elementary inequality
\begin{equation}
\label{elementary}
	\frac{\log x}{x} \le \frac{\log y}{y} \quad \text{for any} \; 0 < x \le y \le e.
\end{equation}
Let 
\[
	\alpha_n = \frac{\| u_n \|_{L^{\infty}(\Omega)}}{\( \intO u_n^{p_n} dx \)^{\frac{1}{p_n}}} = \frac{\| u_n \|_{L^{\infty}(\Omega)}}{\la_n^{\frac{N-1}{p_n}}},
\]
%where
%\[
%	\la_n = \( \intO u_n^{p_n} dx \)^{\frac{1}{N-1}}. 
%\]
and recall that $\la_n = O\( \frac{1}{p_n} \)$ by Corollary \ref{Corollary:bound}, so $\la_n^{\frac{N-1}{p_n}} = O\( \frac{1}{p_n}\)^{\frac{N-1}{p_n}} \to 1$ as $n \to \infty$. 
Thus we have
\[
	\limsup_{n \to \infty} \alpha_n = \limsup_{n \to \infty} \| u_n \|_{L^{\infty}(\Omega)} \le e^{\frac{N-1}{N}}
\]
by Theorem \ref{Theorem:limsup}.
From this, we see that for any small $\eps' > 0$,
\[
	\frac{u_n(x)}{\la_n^{\frac{N-1}{p_n}}} \le \alpha_n \le e^{\frac{N-1}{N}} + \eps' < e
\]
holds for any $x \in \Omega$ and for large $n$.
Therefore by \eqref{elementary}, we have for fixed small $\eps>0$ 
\[
	\frac{\log \( \frac{u_n(x)}{\la_n^{\frac{N-1}{p_n}}} \)}{\frac{u_n(x)}{\la_n^{\frac{N-1}{p_n}}}} 
	\le \frac{\log \alpha_n}{\alpha_n}
	\le \( \frac{N-1}{N} \) \frac{1}{e^{\frac{N-1}{N}}} + \eps
\]
for large $n$.
Hence
\begin{align*}
	\log f_n(x) = p_n \log \frac{u_n(x)}{\la_n^{\frac{N-1}{p_n}}} 
	&\le p_n \( \frac{u_n(x)}{\la_n^{\frac{N-1}{p_n}}} \) \( \frac{N-1}{N} \frac{1}{e^{\frac{N-1}{N}}} + \eps \) \\
	&= p_n \la_n \( \frac{N-1}{N} e^{-\frac{N-1}{N}} + \eps \) \frac{v_n(x)}{\la_n^{\frac{N-1}{p_n}}} \\
	&\le \( \frac{N}{N-1} e^{\frac{N-1}{N}} L_1 d_N^{\frac{1}{N-1}} + \eps \) \( \frac{N-1}{N} e^{-\frac{N-1}{N}} + 2\eps \) v_n(x),
\end{align*}
here we have used $\lim_{n \to \infty} \la_n^{\frac{N-1}{p_n}} = 1$ and
\[
	p_n \la_n \le \frac{N}{N-1} e^{\frac{N-1}{N}} L_1 d_N^{\frac{1}{N-1}} + \eps
\]
for large $n$ by the definition of $L_1$.
Therefore 
\[
	\log f_n(x) \le \( (L_1 + \delta/2) d_N^{\frac{1}{N-1}} \) v_n(x)
\]
holds if we choose $\eps > 0$ small enough.
This proves the pointwise estimate (\ref{pointwise}).

Next, by the use of Brezis-Merle theory for the Finsler $N$-Laplacian, we obtain the integral estimate
\begin{equation}
\label{integral_estimate}
	\int_{B_{R_1/2}(x_0)} \exp \( (L_1 + \delta) d_N^{\frac{1}{N-1}} v_n(x) \) dx \le C 
\end{equation}
for some $R_1 > 0$ small and $C > 0$ independent of $n$, here $x_0$ is a $(L_1, \delta)$-regular point.

Indeed, by the definition of $(L_1, \delta)$-regular point,
we can find $R_1 > 0$ such that
\[
	\int_{B_{R_1}(x_0)} f_n dx \le  \(\frac{\beta_N}{L_1 + 2\delta} \)^{N-1}.
\]
Also by Theorem \ref{Theorem:BM} (i) and the fact that $\| f_n \|_{L^1(\Omega)} = 1$, we have 
\[
	\intO \exp \( (\beta_N - \eps) v_n(x) \) dx \le \frac{\beta_N}{\eps} |\Omega|
\]
for any $\eps \in (0, \beta_N)$.
From this, we obtain
\begin{equation}
\label{L^N_estimate}
	\| v_n \|_{L^N(\Omega)} \le C
\end{equation}
where $C > 0$ is independent of $n$.
Next, let $\phi_n$ be a weak solution of
\[
	-Q_N \phi_n = 0 \quad \text{in} \, B_{R_1}(x_0) \quad \phi_n = v_n \quad \text{on} \, \pd B_{R_1}(x_0).
\]
Then by Theorem \ref{Theorem:BM} (2) and the fact that $\| f_n \|_{L^1(B_{R_1}(x_0))}^{\frac{1}{N-1}} < \frac{\beta_N}{L_1 + 2\delta}$, 
we have 
\begin{equation}
\label{BM2}
	\intO \exp \( (L_1 + \delta) d_N^{\frac{1}{N-1}} |v_n(x) - \phi_n(x)| \) dx \le C
\end{equation}
if we choose $\eps \in (0, \beta_N)$ sufficiently small.
By the comparison principle for the Finsler $N$-Laplacian (see \cite{Xie-Gong} Theorem 3.2) and Serrin's estimates Theorem \ref{Theorem:Serrin} (i),
we have
\[
	\| \phi_n \|_{L^{\infty}(B_{R_1/2}(x_0))} \le \| v_n \|_{L^{\infty}(B_{R_1/2}(x_0))} \le C \| v_n \|_{L^N(B_{R_1}(x_0))} \le C
\]
where we have used \eqref{L^N_estimate}.
Combining this with \eqref{BM2}, we obtain the desired integral estimate \eqref{integral_estimate}.

Comparing \eqref{integral_estimate} and \eqref{pointwise}, we see that
$f_n$ is bounded uniformly in $n$ in $L^q(B_{R_1/2}(x_0))$ where $q = \frac{L_1 + \delta}{L_1 + \delta/2} > 1$. 
Therefore, Serrin's regularity estimate Theorem \ref{Theorem:Serrin} (i) again implies that 
\[
	\| v_n \|_{L^{\infty}(B_{R_1/4}(x_0))} \le C
\]
independent of $n$.
Taking $R_0 = R_1/4$ ends the proof of Lemma \ref{Lemma:small}.
\end{proof}

We know that $\Sigma_{L_1}(\delta)$ is a set of finite points, all of those are interior of $\Omega$.
From Lemma \ref{Lemma:small}, we obtain $S \cap \Omega = \Sigma_{L_1}(\delta)$ for any $\delta>0$ and
\[
	1 \ge \mu(\Omega) \ge \( \frac{\beta_N}{L_1 + 3\delta} \)^{N-1} \sharp (\Sigma_{L_1}(\delta)) = \( \frac{\beta_N}{L_1 + 3\delta} \)^{N-1} \sharp (S \cap \Omega).
\]
Hence
\[
	\sharp (S \cap \Omega) \le \( \frac{L_1+3\delta}{\beta_N} \)^{N-1} \le \( \frac{e^{\frac{1}{N}} \beta_N d_N^{-(\frac{1}{N-1})}+3\delta}{\beta_N} \)^{N-1}.
\]
Taking a limit $\delta \to 0$, we have
\[
	\sharp (S \cap \Omega) \le e^{\frac{N-1}{N}} d_N^{-1}
\]
This proves the first part of Theorem \ref{Theorem:blowup}.

If $x_0 \in S \cap \Omega = \Sigma_{L_1}(\delta)$, then for any  $R >0$, we have
\begin{equation}
\label{blowup}
	\lim_{n \to \infty} \| v_n \|_{L^{\infty}(B_R(x_0))} = +\infty.
\end{equation}
Indeed, if for some $R > 0$, assume there exists $C > 0$ independent of $n$ such that $\| v_n \|_{L^{\infty}(B_R(x_0))} \le C$ for all large $n$.
Then
\[
	f_n = \frac{v_n^{p_n}}{\la_n^{N-1-p_n}} \le C^{p_n} O\(\frac{1}{p_n}\)^{p_n - (N-1)} \to 0 \quad (n \to \infty)
\]
uniformly on $B_R(x_0)$.
This implies $x_0$ is a $(L_1, \delta)$-regular point, which is absurd.
The same kind of argument leads to that the limit measure $\mu$ is atomic and of the form
\[
	\mu = \sum_{i=1}^k \gamma_i \delta_{x_i}
\]
where $S \cap \Omega = \{ x_1, \cdots, x_k \}$.
Since $\mu(\Omega) \le 1$, we have $\sum_{i=1}^k \gamma_i \le 1$ and 
\[
	\gamma_i \ge (\frac{\beta_N}{L_1})^{N-1}
\]
for all $i=1,\cdots, k$ by letting $\delta \to 0$ in \eqref{atom} with $L = L_1$.
This proves Theorem \ref{Theorem:blowup} (i).

On any compact sets in $\Omega \setminus (S \cap \Omega)$, $\{ v_n \}$ is uniformly bounded.
Then by Serrin's and Tolksdorf's regularity estimate, $\{ v_n \}$ is also bounded in $C^{1, \al}_{loc}(\Omega \setminus (S \cap \Omega))$ for some $\al \in (0,1)$.
By Ascoli-Arzel\'a theorem, we have a subsequence and a function $G$ such that $v_n \to G$ in $C^1_{loc}(\Omega \setminus (S \cap \Omega))$.
That this $G$ satisfies Theorem \ref{Theorem:blowup} (ii) is clear.

Finally, since $\la_n = O(\frac{1}{p_n})$ as $n \to \infty$ and $v_n(x) = \frac{u_n(x)}{\la_n}$ is uniformly bounded in $L^{\infty}_{loc}(\Omega \setminus (S \cap \Omega))$,
we easily see that Theorem \ref{Theorem:blowup} (iii) holds.

Thus all the proof of Theorem \ref{Theorem:blowup} has been completed.

\section{Proof of Theorem \ref{Theorem:one_point_blowup}}
\label{section:one_point_blowup}

In this section, we prove Theorem \ref{Theorem:one_point_blowup}.
\begin{proof}
Assume the contrary that $x_0 \in \pd\Omega$,
where $x_0$ is the unique blow-up point of a sequence $v_n = v_{p_n}$ with $p_n \to +\infty$ as $n \to \infty$.
For $R > 0$ small, we may use the Pohozaev identity Theorem \ref{Theorem:Pohozaev} on $\Omega \cap B_R(x_0)$, with the aid of Theorem \ref{Theorem:Serrin}:
\begin{align}
\label{local_Pohozaev}
	&\frac{N}{p_n+1} \int_{\Omega \cap B_R(x_0)} u_n^{p_n+1} dx 
	= \int_{\pd (\Omega \cap B_R(x_0))} \frac{u_n^{p_n+1}}{p_n+1}(x-y) \cdot \nu(x) ds_x \\
	&- \frac{1}{N} \int_{\pd(\Omega \cap B_R(x_0))} H^N(\nabla u_n)(x-y) \cdot \nu(x) ds_x \notag \\
	&+ \int_{\pd(\Omega \cap B_R(x_0))} \( H^{N-1}(\nabla u_n) (\nabla_{\xi} H)(\nabla u_n) \cdot \nu(x) \) (x-y) \cdot \nu(x) ds_x. \notag
\end{align}
In order to remove the integral terms involving $\pd\Omega$, we use a trick in \cite{Santra-Wei}.
Define
\[
	\rho_n = \frac{\int_{\pd\Omega \cap B_R(x_0)} H^N(\nabla u_n) (x - x_0) \cdot \nu(x) ds_x}
	{\int_{\pd\Omega \cap B_R(x_0)} H^N(\nabla u_n) \nu(x_0) \cdot \nu(x) ds_x}
\]
and put $y_n = x_0 + \rho_n \nu(x_0)$. 
We assume $R > 0$ so small such that $1/2 \le \nu(x_0) \cdot \nu(x) \le 1$ for $x \in \pd\Omega \cap B_R(x_0)$.
Then we have that $\rho_n \le 2R$.
By the definition of $y_n$ and $\rho_n$, we see that
\[
	\int_{\pd\Omega \cap B_R(x_0)} H^N(\nabla u_n)(x-y_n) \cdot \nu(x) ds_x \equiv 0
\]
for all $n \in \N$.
Also since $u_n = 0$ on $\pd\Omega$ and $u_n > 0$ in $\Omega$, we see $\nu(x) = -\frac{\nabla u_n(x)}{|\nabla u_n(x)|}$.
By using these, we see \eqref{local_Pohozaev} with $y = y_n$ becomes
\begin{align}
\label{local_Pohozaev2}
	&\frac{N}{p_n+1} \int_{\Omega \cap B_R(x_0)} u_n^{p_n+1} dx 
	= \frac{1}{p_n+1} \int_{\Omega \cap \pd B_R(x_0)} u_n^{p_n+1} (x-y_n) \cdot \nu(x) ds_x \\
	&- \frac{1}{N} \int_{\Omega \cap \pd B_R(x_0)} H^N(\nabla u_n)(x-y_n) \cdot \nu(x) ds_x \notag \\
	&+ \int_{\Omega \cap \pd B_R(x_0))} \( H^{N-1}(\nabla u_n) (\nabla_{\xi} H)(\nabla u_n) \cdot \nu(x) \) (x-y_n) \cdot \nabla u_n(x) ds_x. \notag
\end{align}
Multiplying $(\frac{1}{\la_n})^N$ to both sides of \eqref{local_Pohozaev2} and recalling $v_n = \frac{u_n}{\la_n}$, we have
\begin{align}
\label{local_Pohozaev3}
	&\frac{N}{p_n+1} \( \frac{1}{\la_n} \)^N \int_{\Omega \cap B_R(x_0)} u_n^{p_n+1} dx \\
	&= \frac{1}{p_n+1} \( \frac{1}{\la_n} \)^N \int_{\Omega \cap \pd B_R(x_0)} u_n^{p_n+1} (x-y_n) \cdot \nu(x) ds_x \notag \\
	&- \frac{1}{N} \int_{\Omega \cap \pd B_R(x_0)} H^N(\nabla v_n)(x-y_n) \cdot \nu(x) ds_x \notag \\
	&+ \int_{\Omega \cap \pd B_R(x_0))} \( H^{N-1}(\nabla v_n) (\nabla_{\xi} H)(\nabla v_n) \cdot \nu(x) \) (x-y_n) \cdot \nabla v_n(x) ds_x 
	\notag \\
	& = I + II + III. \notag
\end{align}
We estimate the terms $I, II, III$ on the right-hand side of \eqref{local_Pohozaev3} as follows:
\begin{align*}
	&|I| =  \frac{1}{p_n+1} \( \frac{1}{\la_n} \)^N \big| \int_{\Omega \cap \pd B_R(x_0)} u_n^{p_n+1} (x-y_n) \cdot \nu(x) ds_x \big| \notag \\
	&\le \frac{O(p_n^N)}{p_n^{N-1}(p_n+1)} \| p_n^{N-1} u_n^{p_n+1} \|_{L^{\infty}(\Omega \cap \pd B_R(x_0))} 
\int_{\Omega \cap \pd B_R(x_0)} |(x-y_n) \cdot \nu(x) | ds_x \notag \\
	&= \frac{O(p_n^N)}{p_n^{N-1}(p_n+1)} \| p_n^{N-1} u_n^{p_n+1} \|_{L^{\infty}(\Omega \cap \pd B_R(x_0))} O(R^{N-1}).
\end{align*}
We note that since $S \cap \Omega = \void$ by assumption,
\[
	f_n = \frac{u_n^{p_n}}{\la_n^{N-1}} \to 0
\]
uniformly on compact sets in $\Omega$ and
\[
	p_n^{N-1} u_n^{p_n+1}(x) \le \| u_n \|_{L^{\infty}(\Omega)} p_n^{N-1} u_n^{p_n}(x) \le C \frac{u_n^{p_n}(x)}{\la_n^{N-1}} \le C f_n(x)
\]
by Theorem \ref{Theorem:bound} and the fact that $\la_n = O(\frac{1}{p_n})$ as $n \to \infty$.
Thus we have
\[
	\| p_n^{N-1} u_n^{p_n+1} \|_{L^{\infty}(\Omega \cap \pd B_R(x_0))} \to 0 \quad \text{as} \quad n \to \infty
\]
and thus
\[
	\lim_{R \to 0} \lim_{n \to \infty} |I| = 0.
\]
	
Also, by Theorem \ref{Theorem:blowup} (ii), we have
$v_n \to G$ in $C^{1,\al}_{loc}(\Omega \setminus (S \cap \Omega))$.
Thus we have $H^N(\nabla v_n) = O(1)$ on $\Omega \cap \pd B_R(x_0)$, which implies 
\begin{align*}
	|II| &= \frac{1}{N} \Big| \int_{\Omega \cap \pd B_R(x_0)} H^N(\nabla v_n)(x-y_n) \cdot \nu(x) ds_x \Big| \\
	&\le O(1) \int_{\Omega \cap \pd B_R(x_0)} |(x-y_n) \cdot \nu(x)| ds_x \le O(1) O(R^{N-1}), \\
	|III| &= \Big| \int_{\Omega \cap \pd B_R(x_0))} \( H^{N-1}(\nabla v_n) (\nabla_{\xi} H)(\nabla v_n) \cdot \nu(x) \) (x-y_n) \cdot \nabla v_n(x) ds_x \Big| \\
	&\le O(1) \int_{\Omega \cap \pd B_R(x_0)} |(x-y_n) \cdot \nu(x)| ds_x \le O(1) O(R^{N-1}).
\end{align*}
Therefore we have
\[
	\lim_{R \to 0} \lim_{n \to \infty} |II| = \lim_{R \to 0} \lim_{n \to \infty} |III| = 0.
\]
From these, we obtain
\begin{equation}
\label{RHS_limit}
	\lim_{R \to 0} \lim_{n \to \infty} (\text{RHS of \eqref{local_Pohozaev3}}) = 0.
\end{equation}

On the other hand, 
recall
\begin{align*}
	z_n(x) &= \frac{p_n}{u_n(x_n)} \( u_n(\eps_n x + x_n) - u_n(x_n) \), \\ 
	&x \in \Omega_{R,n} = \frac{( \Omega \cap B_R(x_0) ) - x_n}{\eps_n},
\end{align*}
where $\eps_n^N p_n^{N-1} u_n(x_n)^{p_n+1-N} \equiv 1$.
Then we see from Fatou's lemma, Theorem \ref{Theorem:bound}, and Lemma \ref{Lemma:Ding}, that
\begin{align*}
	&\lim_{R \to 0} \lim_{n \to \infty} \int_{\Omega \cap B_R(x_0))} p_n^{N-1} u_n^{p_n+1}(y) dy \\
	&= \lim_{R \to 0} \lim_{n \to \infty} u_n(x_n)^N \int_{\Omega_{R,n}} \( 1 + \frac{z_n(x)}{p_n} \)^{p_n+1} dx \\
	&\ge C_1^N \int_{U} e^z dx \ge C_1^N \(\frac{N}{N-1}\)^{N-1} N^N \kappa_N
\end{align*}
where $u = \re^N$ or $\re^N_{+}(s_0)$ for some $s_0 >0$ according to the cases
$\frac{{\rm dist}(x_n, \pd\Omega_{R,n})}{\eps_n} \to +\infty$ or 
$\frac{{\rm dist}(x_n, \pd\Omega_{R,n})}{\eps_n} \to s_0$.
Note that our assumption $\sharp S = 1$ assures that we can choose $x_n$ as a maximum points of $u_n$.
From this and the fact that $\la_n = O(\frac{1}{p_n})$ as $n \to \infty$, we have
\begin{equation}
\label{LHS_limit}
	\lim_{R \to 0} \lim_{n \to \infty} (\text{LHS of \eqref{local_Pohozaev3}}) \ge C > 0
\end{equation}
for some positive constant $C > 0$ independent of $n$.

Clearly \eqref{LHS_limit} contradicts to \eqref{RHS_limit}, and we conclude that $x_0 \not\in \pd\Omega$.
\end{proof}

Finally, as a corollary, we prove the following. 

\begin{corollary}
\label{Corollary:Wulff}
Let $R > 0$ and let $\{ u_p \}$ be a sequence of least energy solutions to
\begin{equation}
\label{E_W}
	\begin{cases}
		&-Q_N u_p = u_p^p \quad \mbox{in} \; \mathcal{W}_R, \\
		&u_p > 0 \quad \mbox{in} \; \mathcal{W}_R, \\
		&u_p = 0 \quad \mbox{on} \; \pd\mathcal{W}_R
	\end{cases}
\end{equation}
where $\mathcal{W}_R = \{ x \in \re^N \, : \, H^0(x) < R \}$.
Then the blow-up set $S$ of $v_p$ satisfies $S \cap \mathcal{W}_R = \{ 0 \}$, and
\begin{align*}
%	u_p \to G(\cdot, 0) \quad \text{in } C^1_{loc}(\ol{\mathcal{W}_R} \setminus \{ 0 \})
	u_p \to G(\cdot, 0) \quad \text{in } C^1_{loc}(\mathcal{W}_R \setminus \{ 0 \})
\end{align*} 
where $G$ is the unique Green function on $\mathcal{W}_R$ obtained in Theorem \ref{Theorem:Green}, and
\begin{align*}
	f_p = \frac{u_p^p}{\int_{\mathcal{W}_R} u_p^p dx} \stackrel{*}{\weakto} \delta_{0}
\end{align*}
in the sense of Radon measures on $\mathcal{W}_R$, along the full sequence.
\end{corollary}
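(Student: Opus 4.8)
The plan is to exploit the Wulff symmetry of $\mathcal{W}_R$ to force the blow-up set to collapse to the single point $\{0\}$, and then to read off the limiting profile and the concentration measure from Theorems \ref{Theorem:bound}, \ref{Theorem:limsup}, \ref{Theorem:blowup} and \ref{Theorem:Green}. The first move is a reduction to symmetric solutions. A least energy solution on $\mathcal{W}_R$ comes (up to the scaling $u_p = C_p^{1/(p+1-N)}\underline{u}_p$) from a minimizer of the quotient \eqref{Cp}; the convex (Wulff) symmetrization decreases $\int_{\mathcal{W}_R} H(\nabla u)^N dx$ and preserves $\|u\|_{L^{p+1}}$ (anisotropic P\'olya--Szeg\H{o}, cf.\ \cite{AFTL}), so a minimizer may be taken of the form $u_p(x)=\phi_p(H^0(x))$ with $\phi_p$ non-increasing, and, analysing the equality case of that inequality (applicable because \eqref{Hess} makes $Q_N$ uniformly elliptic off the critical set, which is then Lebesgue-null), \emph{every} least energy solution on $\mathcal{W}_R$ is Wulff-symmetric and radially decreasing. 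Hence $v_p=u_p/\big(\int_{\mathcal{W}_R}u_p^{p}dx\big)^{1/(N-1)}$ is Wulff-symmetric and non-increasing in $H^0(x)$ as well.

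Next I would locate the blow-up set. Since $v_p(0)=\|u_p\|_{L^\infty(\mathcal{W}_R)}\big(\int_{\mathcal{W}_R}u_p^{p}dx\big)^{-1/(N-1)}\ge C_1\big(\int_{\mathcal{W}_R}u_p^{p}dx\big)^{-1/(N-1)}$ and $\int_{\mathcal{W}_R}u_p^{p}dx$ is comparable to $p^{-(N-1)}$ by Corollary \ref{Corollary:bound}, we get $v_p(0)\to\infty$, so $0\in S$. Because each $v_p$ is constant on Wulff spheres, $S$ is a union of such spheres; if a sphere $\{H^0=r_0\}$ with $0<r_0<R$ lay in $S$, monotonicity would force $\phi_{p_n}$ to diverge at some radius $r_0'\in(0,r_0)$ along a subsequence, so the whole (infinite) sphere $\{H^0=r_0'\}\subset\mathcal{W}_R$ would be contained in the blow-up set of that subsequence; passing to the further subsequence of Theorem \ref{Theorem:blowup}, whose interior blow-up set is finite, is a contradiction. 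Therefore $S\cap\mathcal{W}_R=\{0\}$ (and the same reasoning excludes $\pd\mathcal{W}_R$), and, as a by-product, for every $r_0\in(0,R)$ the family $\{v_p\}$ is uniformly bounded on $\{H^0\ge r_0\}$.

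For the limits: by the previous step and the interior/boundary estimates of Theorem \ref{Theorem:Serrin}, $\{v_p\}$ is bounded in $C^{1,\al}$ on each closed annulus $\{r_0\le H^0\le R\}$ up to $\pd\mathcal{W}_R$, so along subsequences $v_p\to G$ in $C^1_{loc}(\mathcal{W}_R\setminus\{0\})$ with $-Q_N G=0$ there and $G=0$ on $\pd\mathcal{W}_R$ (Theorem \ref{Theorem:blowup}(ii)). Since $\la_p:=\big(\int_{\mathcal{W}_R}u_p^{p}dx\big)^{1/(N-1)}\to0$ while $v_p$ is bounded on $\{H^0\ge r_0\}$, we have $u_p\to0$ uniformly there, hence $u_p^{p}\le a^{p}$ with $a<1$ on $\{H^0\ge r_0\}$, while $\int_{\mathcal{W}_R}u_p^{p}dx\ge c\,p^{-(N-1)}$; thus $\int_{\{H^0\ge r_0\}}f_p\,dx\to0$ for every $r_0$, and since $\int_{\mathcal{W}_R}f_p\,dx=1$ we conclude $f_p\stackrel{*}{\weakto}\delta_0$. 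Integrating \eqref{v_p_eq} over Wulff balls gives the radial bound $|\nabla v_p(x)|\le C/H^0(x)$, hence $\int_{\mathcal{W}_\delta}|\nabla v_p|^{N-1}dx\le C\delta$ uniformly in $p$; using this one passes to the limit in the weak formulation of \eqref{v_p_eq} to obtain $-Q_N G=\delta_0$ in $\mathcal{W}_R$, $G=0$ on $\pd\mathcal{W}_R$, so $G=G(\cdot,0)$ by the uniqueness in Theorem \ref{Theorem:Green}. As every subsequential limit equals $G(\cdot,0)$ and the concentration measure is always $\delta_0$, the convergences hold for the full sequence (the asserted convergence being that of $v_p$, since $u_p\to0$ on compacta of $\mathcal{W}_R\setminus\{0\}$ by Theorem \ref{Theorem:blowup}(iii)).

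The main obstacle is the rigidity in the first step: upgrading ``there exists a Wulff-symmetric least energy solution'' to ``every least energy solution on $\mathcal{W}_R$ is Wulff-symmetric'' rests on the equality case of the anisotropic P\'olya--Szeg\H{o} inequality, which requires that the critical set of $u_p$ have zero measure — this is where some care (a unique-continuation / non-degeneracy input compatible with the $C^{1,\al}$ regularity of solutions) is needed. If one is content to argue only for the symmetric least energy solution produced by convex symmetrization, this difficulty disappears and the remaining steps are unchanged; everything else is a routine combination of the results already established.
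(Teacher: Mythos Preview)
Your argument is correct and follows the same overall architecture as the paper: establish Finsler-radial symmetry of $u_p$, use it together with the finiteness of $\sharp(S\cap\mathcal{W}_R)$ from Theorem \ref{Theorem:blowup} to force $S\cap\mathcal{W}_R=\{0\}$, then identify the limit via the uniqueness of the Green function in Theorem \ref{Theorem:Green} to upgrade subsequential convergence to full-sequence convergence.

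The one genuine difference is how symmetry is obtained. You go through the anisotropic P\'olya--Szeg\H{o} inequality for minimizers and correctly flag the equality-case issue as the main obstacle. The paper instead invokes Theorem 4.1 of Belloni--Ferone--Kawohl \cite{Belloni-Ferone-Kawohl}, which, via the Pohozaev identity (valid for $C^1(\overline{\Omega})$ weak solutions under the standing assumption that $\xi\mapsto H^N(\xi)$ is convex and $C^1$), shows that \emph{every} positive solution of \eqref{E_W} on $\mathcal{W}_R$ is Finsler-radial. This bypasses your flagged difficulty entirely: no equality case, no unique-continuation input, and it applies to all solutions rather than only least-energy ones. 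Your route is more variational and self-contained, but leaves that rigidity step open unless one restricts to the symmetrized minimizer. In the remaining steps you actually provide more detail than the paper does: the direct verification that $f_p\stackrel{*}{\weakto}\delta_0$ with full mass, the radial gradient bound $|\nabla v_p(x)|\le C/H^0(x)$, and the passage to the limit in the weak formulation to obtain $-Q_N G=\delta_0$ are all welcome additions. You are also right that the asserted $C^1_{loc}$ convergence is that of $v_p$, not $u_p$, since $u_p\to 0$ locally uniformly away from $0$.
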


\begin{proof}
The usual method of moving plane to prove the symmetry of solutions is not applicable in the anisotropic situation.
However, we can use Theorem 4.1 in \cite{Belloni-Ferone-Kawohl} under the convexity and $C^1$-assumption of the map $\xi \mapsto H^N(\xi)$. 
(Note that the key point of the proof of Theorem 4.1 in \cite{Belloni-Ferone-Kawohl} is the Pohozaev identity Theorem 4.2 in \cite{Belloni-Ferone-Kawohl} for $C^1(\ol{\Omega})$-weak solutions, 
which is valid by the above assumptions).
Thus we assure that any positive solution $u_p$ to \eqref{E_W} is {\it Finsler-radial}, 
that is, all level sets of $u_p$ are homothetic to $\mathcal{W}_R$ for any $p > 1$. 
Let $S$ be the blow-up set of $v_p$. Then we see that $S \cap \mathcal{W}_R = \{ 0 \}$.
Indeed, if there were a point $x_0 \in S \cap \mathcal{W}_R$, then all points on the level set of $u_p$ passing through $x_0$ must be blow-up points of $v_p$,
which contradicts to the fact that $\sharp (S \cap \mathcal{W}_R)$ is finite.
Thus by Theorem \ref{Theorem:blowup}, we see
\begin{align*}
%	v_p \to G(\cdot, 0) \quad \text{in } C^1_{loc}(\ol{\mathcal{W}_R} \setminus \{ 0 \})
	v_p \to G(\cdot, 0) \quad \text{in } C^1_{loc}(\mathcal{W}_R \setminus \{ 0 \})
\end{align*} 
for some function $G$ along a subsequence.
The limit function must be the unique Green function constructed in Theorem \ref{Theorem:Green},
and by the uniqueness, the convergence is true for the full sequence.
\end{proof}

%---------------------------------------------------------------------------
%
% Acknowledgment and References
%
%---------------------------------------------------------------------------

\vspace{1em}\noindent
{\bf Acknowledgments.}

%\begin{acknowledgements}
This work was partly supported by Osaka City University Advanced Mathematical Institute MEXT Joint Usage / Research Center on Mathematics and Theoretical Physics JPMXP0619217849.
The second author (F.T.) was supported by JSPS KAKENHI Grant-in-Aid for Scientific Research (B), JP19H01800, and JSPS Grant-in-Aid for Scientific Research (S), JP19H05597.
%\end{acknowledgements}

%%%%%%%%%%%%%%%%%%%%%%%%%%%%%%%%%%%%%%%%%%%%%%%%%%%%%%%%%%%%%%%%%%%%%%%%%%%%%%%%%%%%%%%%%%%%%%%%%%%%%%%%%%%%%%%%%%%%%%%%%%

\end{document}